\pgfplotsset{compat=1.14}
\theoremstyle{plain}
\newtheorem*{theorem*}{Theorem}
\newtheorem{theorem}{Theorem}[section]
\newtheorem{proposition}[theorem]{Proposition}
\newtheorem{lemma}[theorem]{Lemma}
\newtheorem{corollary}[theorem]{Corollary}
\newtheorem{conjecture}[theorem]{Conjecture}
\newtheorem{combprob}[theorem]{Combinatorial problem}
\newtheorem*{principle*}{Guiding principle}
\newtheorem{question}[theorem]{Question}
\newtheorem*{conjecture*}{Conjecture}
\theoremstyle{definition}
\newtheorem{definition}[theorem]{Definition}
\newtheorem{example}[theorem]{Example}
\DeclareMathOperator{\codim}{codim}
\newcommand{\ZZ}{\mathbb{Z}}
\newcommand{\AS}{\mathbb{A}}
\newcommand{\CC}{\mathbb{C}}
\newcommand{\PP}{\mathbb{P}}
\newcommand{\GG}{\mathbb{G}}
\newcommand{\TT}{\mathbb{T}}
\begin{document}

\title{Secant indices of projective varieties}
\author{Grayson Jorgenson}
\date{}
\maketitle

\begin{abstract}
To each subvariety $X$ in $\PP^n$ of codimension $m$ we associate an integer sequence of length $m + 1$ from $1$ to the degree of $X$ recording the maximal cardinalities of finite,  reduced intersections of $X$ with linear subvarieties of $\PP^n$. We call this the sequence of secant indices of $X$. Similar numbers have been studied independently with the aim of classifying subvarieties with extremal secant spaces. Our focus in this note is the study of the combinatorial properties that the secant indices satisfy collectively. We show these sequences are strictly increasing for nondegenerate smooth subvarieties, develop a method to compute term-wise lower bounds for the secant indices, and compute these lower bounds for Veronese and Segre varieties. In the case of Veronese varieties, the truth of the Eisenbud-Green-Harris conjecture would imply the lower bounds we find are in fact equal to the secant indices. Along the way we state several relevant questions and additional conjectures which to our knowledge are open.
\end{abstract}

\section{Introduction}

Given a pure-dimensional closed subscheme $X$ of $\PP^n$ of codimension $m$, and a choice of integer $0\leq j \leq m$, let $\Lambda_j(X)\subseteq \GG(j, n)$ denote the subset of dimension $j$ linear subvarieties of $\PP^n$ with finite and reduced intersection with $X$. Then one may define $$\mathfrak{L}_j(X) := \max\{|X\cap L|\mid L\in \Lambda_j(X)\}.$$ 

Here $|\cdot|$ is used to denote the set-theoretic count of the closed points of the scheme within. We call this integer $\mathfrak{L}_j(X)$ the $j$th \emph{secant index} of $X$, and together these numbers form a sequence of length $m+1$ starting at $1$ and ending at $\deg(X)$, which we denote by $\mathfrak{L}(X)$.

Similar numbers have been studied independently, such as in \cite{ahn_kwak1} \cite{bertin1}  \cite{kwak1} \cite{nollet1} \cite{noma1}. It is known that the presence of a $m$-multisecant line, a line meeting $X$ finitely in at least $m$ points, implies that the Castelnuovo-Mumford regularity $\text{reg}(X)$ of $X$ is at least $m$ \cite[Proposition 1.1]{kwak1}. The cited references focus on classifying varieties with extremal secant subspaces with one of the goals being to provide examples of varieties with near maximal regularity. Such work provides evidence for the Eisenbud-Goto regularity conjecture \cite{eisenbud_goto1} that $\text{reg}(X)\leq \deg(X) - m + 1$ when $X$ is a nondegenerate codimension $m$ subvariety of $\PP^n$.

In this work, we are instead concerned with the properties that these indices satisfy collectively, as sequences. The sequences that occur in general seem to admit interesting combinatorial descriptions. For instance, given a smooth nondegenerate variety $X$, $\mathfrak{L}(X)$ is always strictly increasing. Yet if $X$ is not a subvariety of minimal degree in $\PP^n$, then this sequence must contain gaps and these gaps need not occur only in one place in the sequence. One of the simplest examples of this is when $X$ is the image of the degree $3$ Veronese embedding of $\PP^2$ into $\PP^9$; the sequence $\mathfrak{L}(v_3^{(2)}(\PP^2))$ is this case has $8$ terms, but there are $9$ numbers in the sequence $1,2,\ldots,9$. Here we have $\mathfrak{L}(v_3^{(2)}(\PP^2)) = (1,2,3,4,5,6,7,9)$ and the observation that the gap occurs between the final two terms of the sequence is tantamount to the classical Cayley-Bacharach theorem.

We attempt the development of a method to compute the secant indices for an arbitrary smooth variety that involves defining two accessory sequences to $\mathfrak{L}(X)$, denoted by ${\mathfrak{R}\mathfrak{L}}^G(X)$ and $\mathfrak{R}\mathfrak{L}(X)$, that are term-wise lower bounds for $\mathfrak{L}(X)$. In the specific cases where $X$ is a Veronese or Segre variety, we show that computing these accessory sequences is equivalent to solving two purely combinatorial problems. These accessory sequences then produce conjectural values for $\mathfrak{L}(X)$ for those varieties. In the case of Veronese varieties, we will show that the truth of the Eisenbud-Green-Harris conjecture \cite{eisenbud_green_harris1} would imply the sequences ${\mathfrak{R}\mathfrak{L}}^G(X)$, $\mathfrak{R}\mathfrak{L}(X)$ are indeed equal to $\mathfrak{L}(X)$. The definitions of ${\mathfrak{R}\mathfrak{L}}^G(X)$ and $\mathfrak{R}\mathfrak{L}(X)$ work for arbitrary smooth varieties, and the hope is that for every variety $X$ there is a tractable combinatorial problem associated to computing these sequences, reflecting the nature of intersections of hyperplane sections of $X$ which are as reducible as possible.

This note is organized as follows. In Section 2 we describe properties that the sequence of secant indices satisfy in general, including the strictness of their growth in the case of a smooth nondegenerate subvariety of $\PP^n$. We state a number of questions about the secant indices in Section 3 which to our knowledge are open, and define the integer sequences ${\mathfrak{R}\mathfrak{L}}^G(X)$, $\mathfrak{R}\mathfrak{L}(X)$. In Section 4 we illustrate the accessory sequences and show they are equal when $X$ is a Veronese variety. We prove a method that computes them and show that it would follow from the Eisenbud-Green-Harris conjecture that the sequences agree with $\mathfrak{L}(X)$. In Section 5, we derive a method to compute ${\mathfrak{R}\mathfrak{L}}^G(X)$ when $X$ is a Segre variety and provide several example computations, conjecturing there that ${\mathfrak{R}\mathfrak{L}}^G(X)$ agrees with $\mathfrak{L}(X)$. In Section 6 we discuss our original motivation from the problem of counting lines on surfaces, where our hope is that one can compute these maximal numbers by extrapolation from a related sequence of indices.

\textbf{Acknowledgments} I wish to thank Paolo Aluffi for his support and for many useful discussions.


\section{General properties}

Throughout we work over $k = \CC$, all points are assumed to be closed points, and a variety is an integral scheme. The most basic form of the main question we study in this note is as follows: what is the maximum number of points at which a linear subvariety of a given dimension can meet a pure-dimensional reduced subscheme $X\subseteq \PP^n$? This is only interesting when the linear subvariety has dimension small enough to meet $X$ in finitely many points, thus we ask about the numbers $$\max \{\deg(X\cap L)\mid L\subseteq \PP^n \text{ linear, } \dim(L) = i, \dim(X\cap L)= 0\},$$ for $0\leq i \leq \codim(X).$ Because a general linear subvariety of dimension less than $\codim(X)$ fails to meet $X$ at all, the problem of determining these numbers is a type of \emph{quasi-enumerative problem} \cite[Sectioin 11.2]{eisenbud_harris1}. Note the first of these numbers, the maximum number of points that a single point can meet $X$, is clearly just $1$.

However, for the other numbers the exact method of counting the points in the intersections must be made precise. It would be nice to know that the last number in this sequence is always $\deg(X)$. If we count the points of a zero-dimensional scheme $Y$ by letting $\deg(Y)$ denote the scheme-theoretic degree rather than the set-theoretic count of the distinct closed points of the support of $Y$, this is not always the case. 

\begin{example}
Consider the union $X$ of two planes in $\PP^4$ meeting at a single point. The degree of $X$ is $2$, but it is well-known that $\deg(X\cap L) = 3$ for any plane $L$ meeting $X$ at only its singular point. 
\end{example}

To avoid such issues we opt to instead use the naive set-theoretic count, considering the numbers $$\max \{|X\cap L|\mid L\subseteq \PP^n \text{ linear, } \dim(L) = j, \dim(X\cap L)= 0\},$$ for $0\leq j \leq \codim(X).$ We still reserve $\deg(\cdot)$ to denote scheme-theoretic degree, and instead denote by $|Y|$ the cardinality of the set of closed points of a zero-dimensional scheme $Y$. The resulting sequence will always be nondecreasing. This version of the question fell out from our original motivation which is discussed in Section 6.

One of the properties that seems reasonable to expect is that for nondegenerate $X$ this sequence should in fact be strictly increasing. To prove such a result we introduce one last refinement: we require the intersections we are counting to be reduced, arriving at the sequence of \emph{secant indices} of $X$, $\mathfrak{L}(X)$, defined in Section 1, and we will restrict our attention to smooth projective varieties. There is then no distinction between using the set-theoretic count or the scheme-theoretic degree. In this section and Sections 3, 4, and 5, the sequences sporting these additional properties are our objects of study.

\begin{example}
\label{ratnormcurve}
Consider the rational normal curve $C$ that is the image of the Veronese map $v^{(1)}_d: \PP^1\hookrightarrow \PP^d$, a degree $d$ smooth curve. The sequence $\mathfrak{L}(C)$ has $d$ terms, starting at $1$ and ending at $d$. It is indeed the only strictly increasing sequence of that length connecting those two numbers, $\mathfrak{L}(C) = (1,2,\ldots, d)$. Veronese embeddings of higher dimension projective spaces will be discussed in Section 4. 
\end{example} 

\begin{example}
\label{degenvars}
If $Y$ is a smooth subvariety of $\PP^n$, and $Y$ is contained in a linear subvariety of dimension $r$, then the last $n - r$ terms of $\mathfrak{L}(Y)$ are all equal to $\deg(Y)$.
\end{example}

\subsection{Strictness of growth}

The sequence of secant indices of a smooth variety $X$ in $\PP^n$ is always nondecreasing. If $X$ is nondegenerate, then the sequence is in fact strictly increasing. If $X$ is degenerate, then $\mathfrak{L}(X)$ has repeated terms as in Example \ref{degenvars}, but is otherwise strictly increasing. We derive this property below using an elementary argument revolving around Bertini's theorem. Note this argument is similar in essence to the elementary approach seen in \cite{kwak1} for deriving the upper bound on the secant indices, and indeed that same upper bound is an immediate consequence of the strictness of growth we derive here. Several of the lemmas used below are well-known results, but for lack of appropriate references we give complete proofs for most of them here.

The specific consequence of Bertini's theorem \cite[Theorem 0.5]{eisenbud_harris1} we will invoke throughout is the following.

\begin{lemma}
\label{specificbertini}
Let $L$ be a linear subvariety and let $X$ be a smooth subvariety of $\PP^n$. The set of all hyperplanes in $\PP^n$ containing $L$ forms a linear subvariety $T$ of dual projective space $(\PP^n)^\vee$. There is a nonempty open subset of $T$ of hyperplanes containing $L$ and having smooth intersection with $X$ outside of $X\cap L$.
\end{lemma}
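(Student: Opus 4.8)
The plan is to reduce the statement to the classical form of Bertini's theorem by realizing the hyperplanes through $L$ as pullbacks of hyperplanes under the linear projection away from $L$. First I would dispose of the claim that $T$ is linear: writing $r = \dim L$ and choosing independent linear forms $\ell_0,\ldots,\ell_{n-r-1}$ cutting out $L$, a hyperplane contains $L$ precisely when it is the zero locus of a nonzero linear combination $\sum_i a_i \ell_i$. The assignment $[a_0 : \cdots : a_{n-r-1}] \mapsto V\!\left(\sum_i a_i \ell_i\right)$ then identifies $T$ with a $\PP^{n-r-1}$ embedded linearly in $(\PP^n)^\vee$. (One assumes $\dim L \leq n-1$, since otherwise $T$ is empty and there is nothing to prove.)

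Next I would introduce the projection $\pi \colon \PP^n \dashrightarrow \PP^{n-r-1}$, given by $x \mapsto [\ell_0(x) : \cdots : \ell_{n-r-1}(x)]$, which is a morphism exactly on the complement of $L$, its indeterminacy locus. The key observation is that each hyperplane $H \in T$ is the closure of the preimage $\pi^{-1}(H')$ of a hyperplane $H' \subseteq \PP^{n-r-1}$, and that this correspondence $H \leftrightarrow H'$ is precisely the isomorphism $T \cong (\PP^{n-r-1})^\vee$ furnished by the previous step. Restricting $\pi$ to the smooth variety $X \setminus (X\cap L)$ yields a morphism $\varphi \colon X \setminus (X\cap L) \to \PP^{n-r-1}$, and for $H$ corresponding to $H'$ one has $\varphi^{-1}(H') = (X\cap H)\setminus(X\cap L)$.

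With this in hand I would invoke the cited general form of Bertini's theorem: since we work over $\CC$, so in characteristic zero, and $X\setminus(X\cap L)$ is smooth, there is a nonempty open set of hyperplanes $H'\subseteq \PP^{n-r-1}$ for which $\varphi^{-1}(H')$ is smooth, where the empty scheme counts as smooth. Transporting this open set back through the isomorphism $T \cong (\PP^{n-r-1})^\vee$ produces a nonempty open subset of $T$ consisting of hyperplanes $H$ whose intersection $X\cap H$ is smooth away from $X\cap L$, which is exactly the assertion.

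The step I expect to require the most care is the identification $\varphi^{-1}(H') = (X\cap H)\setminus(X\cap L)$ together with the bookkeeping that a general $H'$ corresponds to a general $H \in T$; here one must verify that $\pi$ genuinely realizes the hyperplanes through $L$ as pullbacks and that no smoothness is lost along the indeterminacy locus $L$, which is precisely the locus we are entitled to excise. A secondary point is to confirm that Bertini applies even when $\varphi\bigl(X\setminus(X\cap L)\bigr)$ is lower-dimensional: in that case a general $H'$ simply misses the image and the intersection is empty, so the conclusion still holds.
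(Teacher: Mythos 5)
Your argument is correct and is precisely the standard deduction the paper has in mind: it states this lemma with no proof beyond citing Bertini's theorem \cite[Theorem 0.5]{eisenbud_harris1}, and your reduction via the linear projection away from $L$ (identifying $T$ with $(\PP^{n-r-1})^\vee$ and applying Bertini to the morphism $X\setminus(X\cap L)\to\PP^{n-r-1}$) is the intended way to fill in the details. No gaps.
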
 

For any closed subscheme $Y$ of $\PP^n(x_0:\ldots :x_n)$ and any point $p\in Y$, we denote by $\mathbb{T}_p(Y)$ the \emph{embedded tangent space} to $Y$ at $p$. If $Y = V(I)$ for a homogeneous ideal $I$ with generators $F_1,\ldots,F_r\in k[x_0,\ldots, x_n]$, then the embedded tangent space at $p$ is the linear subvariety cut out by the polynomials $$\frac{\partial F_i}{\partial x_0}(p) x_0 + \ldots + \frac{\partial F_i}{\partial x_n}(p) x_n,$$ for $i = 1,\ldots, r$. The subscheme $Y$ is smooth at $p$ if and only if $$\dim(\mathbb{T}_p(Y)) = \dim(Y)$$ by the Jacobian criterion for singularities. Following immediately from this definition: 

\begin{lemma}
\label{tangspacecomm}
Let $Y$ be a closed subscheme of $\PP^n$, and let $L$ be a linear subvariety. Suppose $p$ is a closed point of the scheme-theoretic intersection $Y\cap L$. Then $\mathbb{T}_p(Y\cap L) = \mathbb{T}_p(Y)\cap L$.
\end{lemma}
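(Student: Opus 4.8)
The plan is to unwind the coordinate definition of the embedded tangent space directly, exploiting the fact that the linearization of a linear form is the form itself. First I would fix homogeneous generators $F_1,\ldots,F_r$ of the ideal $I(Y)\subseteq k[x_0,\ldots,x_n]$ of $Y$, and write $L = V(\ell_1,\ldots,\ell_s)$ for linear forms $\ell_1,\ldots,\ell_s$. The scheme-theoretic intersection $Y\cap L$ is by definition cut out by the ideal $I(Y) + I(L)$, which is generated by the combined list $F_1,\ldots,F_r,\ell_1,\ldots,\ell_s$. This gives an explicit generating set to which the definition of $\mathbb{T}_p(Y\cap L)$ can be applied.

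Applying that definition to this generating set, $\mathbb{T}_p(Y\cap L)$ is the linear subvariety cut out by the linear forms
$$\sum_{j=0}^n \frac{\partial F_i}{\partial x_j}(p)\, x_j \ \ (i = 1,\ldots,r) \qquad \text{together with} \qquad \sum_{j=0}^n \frac{\partial \ell_k}{\partial x_j}(p)\, x_j \ \ (k = 1,\ldots,s).$$
The key observation is that for a linear form $\ell_k = \sum_j a_{kj} x_j$ the partial derivatives $\partial \ell_k/\partial x_j = a_{kj}$ are constant, so its linearization at $p$ is simply $\ell_k$ itself. Hence the first family of equations cuts out precisely $\mathbb{T}_p(Y)$, while the second family cuts out exactly $L = V(\ell_1,\ldots,\ell_s)$, and intersecting the two loci yields $\mathbb{T}_p(Y\cap L) = \mathbb{T}_p(Y)\cap L$.

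The only point requiring any care is that the embedded tangent space is being read off from a particular choice of generators; here I would simply note that the equations arising from the $F_i$ depend only on $Y$ and reconstruct $\mathbb{T}_p(Y)$, while those arising from the $\ell_k$ depend only on $L$, so the two families do not interact. Since the whole argument is an elementary unwinding of the coordinate definition, I do not anticipate a genuine obstacle: the entire content is the remark that a linear subvariety is its own embedded tangent space at each of its points, which forces the differential of the extra defining equations of $Y\cap L$ to cut out $L$ exactly.
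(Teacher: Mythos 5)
Your proposal is correct and matches the paper exactly in spirit: the paper offers no proof at all, stating that the lemma ``follows immediately from the definition'' of the embedded tangent space, and your computation is precisely that immediate verification --- the defining ideal of $Y\cap L$ is generated by the $F_i$ together with the linear forms cutting out $L$, and the linearization at $p$ of a linear form is the form itself. The only point you gesture at but could state once explicitly is that the span of the linearized generators at a point $p$ of the scheme is independent of the chosen generating set (since every element of the ideal vanishes at $p$, the Leibniz rule shows its differential lies in the span of the generators' differentials), so reading off $\mathbb{T}_p(Y)$ and $L$ from the two separate families is legitimate.
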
 

\begin{lemma}
\label{tangentcut}
Let $Y$ be a closed subscheme of $\PP^n$ and let $L$ be a linear subvariety of dimension $< n - 1$. Suppose $L$ has reduced zero-dimensional intersection with $Y$, and the intersection consists of the points $p_1,\ldots,p_r$. Then if $T$ is the linear subvariety of $(\PP^n)^\vee$ consisting of all hyperplanes containing $L$, there is an nonempty open subset of $T$ of hyperplanes $H$ with $Y\cap H$ smooth at the points $p_1,\ldots,p_r$.
\end{lemma}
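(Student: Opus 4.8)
The plan is to translate smoothness of $Y\cap H$ at each $p_i$ into a single transversality condition between $H$ and the embedded tangent space $\mathbb{T}_{p_i}(Y)$ by way of Lemma \ref{tangspacecomm}, and then to show that this condition is satisfied on a nonempty open subset of $T$ by an irreducibility argument. Throughout I use that $Y$ is smooth at the points $p_1,\dots,p_r$, which is the relevant input here and holds for the smooth varieties under consideration.

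First I would record what the hypotheses give at each $p_i$. Since $Y\cap L$ is reduced and zero-dimensional, it is smooth at $p_i$, so $\mathbb{T}_{p_i}(Y\cap L)$ is the reduced point $\{p_i\}$; Lemma \ref{tangspacecomm} then yields $\mathbb{T}_{p_i}(Y)\cap L = \{p_i\}$. Writing $d_i = \dim_{p_i} Y$ and using smoothness of $Y$ at $p_i$ (so that $\dim \mathbb{T}_{p_i}(Y) = d_i$), this forces $\mathbb{T}_{p_i}(Y)\not\subseteq L$ whenever $d_i\geq 1$, since otherwise $\mathbb{T}_{p_i}(Y) = \mathbb{T}_{p_i}(Y)\cap L$ would be zero-dimensional.

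The heart of the argument is the claim that a single open condition on $H$ suffices: if $H\supseteq L$ and $H\not\supseteq \mathbb{T}_{p_i}(Y)$, then $Y\cap H$ is smooth at $p_i$. To see this, note that $\mathbb{T}_{p_i}(Y)$ is a linear space of dimension $d_i$ and $H$ cuts it properly, so $\dim(\mathbb{T}_{p_i}(Y)\cap H) = d_i - 1$; by Lemma \ref{tangspacecomm} this equals $\dim \mathbb{T}_{p_i}(Y\cap H)$. On the other hand $p_i\in L\subseteq H$, and $Y\cap H$ is cut out in $Y$ by one linear form that does not vanish identically near $p_i$ (the tangent space just dropped dimension), so Krull's principal ideal theorem gives $\dim_{p_i}(Y\cap H)\geq d_i - 1$, while the Zariski tangent space bounds this local dimension from above by $\dim \mathbb{T}_{p_i}(Y\cap H) = d_i - 1$. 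Hence $\dim_{p_i}(Y\cap H) = \dim \mathbb{T}_{p_i}(Y\cap H) = d_i - 1$, and the Jacobian criterion gives smoothness at $p_i$. When $d_i = 0$ the conclusion is automatic, since $Y\cap H$ agrees with the reduced point $Y$ near $p_i$.

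It then remains to produce such an $H$. For each $i$ with $d_i\geq 1$, the locus $T_i := \{H\in T : H\supseteq \mathbb{T}_{p_i}(Y)\}$ is the set of hyperplanes containing the linear span $\langle L, \mathbb{T}_{p_i}(Y)\rangle$, hence a linear, and in particular closed, subvariety of $T$. It is proper: every $H\in T$ contains $L$, so $\bigcap_{H\in T} H = L$, and $T_i = T$ would force $\mathbb{T}_{p_i}(Y)\subseteq L$, contradicting the second paragraph. Because $\dim L < n-1$, the variety $T$ is an irreducible projective space of positive dimension, so the finite union $\bigcup_i T_i$ is a proper closed subset and its complement is a nonempty open subset of $T$; by the claim, every hyperplane in this complement has $Y\cap H$ smooth at all of $p_1,\dots,p_r$. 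I expect the main obstacle to be the dimension bookkeeping in the central claim, namely isolating the fact that transversality to the tangent space, $H\not\supseteq\mathbb{T}_{p_i}(Y)$, is exactly the one condition needed, and that its failure locus is a proper subvariety of $T$ — which is precisely the point at which reducedness of $Y\cap L$ enters.
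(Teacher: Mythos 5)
The paper states Lemma \ref{tangentcut} without proof (it is among the lemmas the author describes as well known and does not write out), so there is no argument of the author's to compare against; your proof stands on its own, and it is correct. The reduction of smoothness of $Y\cap H$ at $p_i$ to the single linear condition $H\not\supseteq\mathbb{T}_{p_i}(Y)$, via the sandwich $d_i-1\le\dim_{p_i}(Y\cap H)\le\dim\mathbb{T}_{p_i}(Y\cap H)=d_i-1$ followed by the Jacobian criterion, is exactly the right mechanism, and the failure loci $T_i$ are proper linear subspaces of the projective space $T$ precisely because reducedness of $Y\cap L$ forces $\mathbb{T}_{p_i}(Y)\cap L=\{p_i\}$, so that $\mathbb{T}_{p_i}(Y)\not\subseteq L$. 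One remark: you are right to flag that you use smoothness of $Y$ at the points $p_i$, a hypothesis absent from the statement as printed. It is genuinely necessary --- for $Y$ a nodal plane cubic and $L$ the node, every line $H$ through the node meets $Y$ non-reducedly there, so no $H\in T$ gives $Y\cap H$ smooth at that point --- and it holds in every application the paper makes of the lemma, where $Y$ is a smooth variety. Your added hypothesis therefore repairs, rather than weakens, the statement.
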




Verifying that $\mathfrak{L}(X)$ is strictly increasing is not difficult when $X$ is a curve. First, note that a reduced hyperplane section of a nondegenerate variety is nondegenerate, inside of the hyperplane. We say a reducible subscheme of projective space is nondegenerate if not all of its irreducible components lie in one hyperplane.

\begin{lemma}
\label{gennondegen}
Let $X$ be a nondegenerate subvariety of $\PP^n$ of dimension $> 0$ and suppose $H$ is a hyperplane such that $X\cap H$ is reduced. Then $X\cap H$ is nondegenerate in $H\cong \PP^{n-1}$.
\end{lemma}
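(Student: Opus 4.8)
The plan is to argue by contradiction using the cohomology of the hyperplane-section exact sequence. Suppose $X\cap H$ were degenerate in $H\cong \PP^{n-1}$. Then $X\cap H$ lies in some hyperplane of $H$, which is cut out by the restriction to $H$ of a linear form $\ell'$ on $\PP^n$ that is not a scalar multiple of the linear form $\ell$ defining $H$. In particular $\ell,\ell'$ are linearly independent linear forms on $\PP^n$, and $\ell'$ vanishes on the set of points of $X\cap H$.

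Since $X$ is a nondegenerate variety it is not contained in $H$, so $\ell$ restricts to a nonzero, hence (as $X$ is integral) nonzerodivisor, section of $\OOs_X(1)$, and multiplication by $\ell$ fits into the exact sequence
$$0 \to \OOs_X \xrightarrow{\,\cdot\,\ell\,} \OOs_X(1) \to \OOs_{X\cap H}(1)\to 0,$$
where $X\cap H$ carries its scheme-theoretic structure. Taking global sections, exactness at the middle term identifies the kernel of the restriction map $H^0(\OOs_X(1))\to H^0(\OOs_{X\cap H}(1))$ with the image of $H^0(\OOs_X)\xrightarrow{\,\cdot\,\ell\,} H^0(\OOs_X(1))$. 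Because $X$ is integral and projective, $H^0(\OOs_X)=\CC$, so this kernel is exactly the line $\CC\cdot(\ell|_X)$.

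The hypothesis that $X\cap H$ is reduced is what lets me pass from the statement that $\ell'$ vanishes set-theoretically on $X\cap H$ to the statement that $\ell'|_X$ lies in the kernel of $H^0(\OOs_X(1))\to H^0(\OOs_{X\cap H}(1))$, since for a reduced scheme vanishing on the support is the same as vanishing on the scheme. By the previous paragraph this forces $\ell'|_X = c\,(\ell|_X)$ for some $c\in \CC$, that is, the nonzero linear form $\ell'-c\,\ell$ on $\PP^n$ vanishes identically on $X$. This contradicts the nondegeneracy of $X$ and completes the argument.

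The computation itself is short; the step requiring the most care is the one where reducedness is invoked. Degeneracy of $X\cap H$ only supplies a form vanishing on the closed points of $X\cap H$, whereas the cohomological kernel sees the scheme-theoretic intersection, namely the cokernel of multiplication by $\ell$; reducedness of $X\cap H$ is precisely what bridges these two. I would also note that the identification of the kernel uses only $H^0(\OOs_X)=\CC$ together with the fact that $\ell$ is a nonzerodivisor on $\OOs_X$, so that neither smoothness nor normality of $X$ is needed here, only that $X$ is an integral projective variety not contained in $H$.
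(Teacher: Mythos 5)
Your proof is correct, but it takes a genuinely different route from the paper's. The paper argues by degree counting: assuming $X\cap H$ lies in a hyperplane $L$ of $H$, it produces a second hyperplane $H'\supseteq L$ through a point of $X\setminus H$, observes that $X\cap H'$ then contains all the components of $X\cap H$ plus at least one extra component of the right dimension, and derives a contradiction from the fact that all hyperplane sections of $X$ have the same degree (with reducedness of $X\cap H$ used to guarantee that its components appear with multiplicity one, so the extra component genuinely increases the degree). You instead run the standard cohomological argument: the restriction sequence $0\to\OOs_X\xrightarrow{\cdot\ell}\OOs_X(1)\to\OOs_{X\cap H}(1)\to 0$ identifies the kernel of $H^0(\OOs_X(1))\to H^0(\OOs_{X\cap H}(1))$ with $\CC\cdot(\ell|_X)$ via $H^0(\OOs_X)=\CC$, and reducedness of $X\cap H$ converts set-theoretic vanishing of $\ell'$ into scheme-theoretic vanishing, forcing $\ell'-c\ell$ to kill $X$. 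Each step of yours checks out: integrality of $X$ and $X\not\subseteq H$ give the injectivity of multiplication by $\ell$, and properness plus integrality give $h^0(\OOs_X)=1$. What your approach buys is brevity and a transparent accounting of exactly where reducedness enters; what the paper's approach buys is that it stays entirely within elementary intersection-theoretic language (degrees and multiplicities of components), consistent with the surrounding Bertini-style arguments, and avoids invoking sheaf cohomology. Both correctly isolate reducedness as the essential hypothesis, just at different points: you use it to pass from support to scheme, the paper uses it to pin the multiplicities of the components of $X\cap H$ at one.
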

\begin{proof} 
Suppose to the contrary that there is a hyperplane $L$ in $H\cong \PP^{n-1}$ with $X\cap H\subseteq L$. Since $X$ is nondegenerate, there exists a point $p\in X\setminus H$. Thus we can find a hyperplane $H^\prime$ of $\PP^n$ containing both $L$ and $p$. Further, $H\cap H^\prime = L$, so $X\cap H\subseteq X\cap H^\prime$.

The intersection $X\cap H^\prime$ is pure-dimensional of dimension $\dim(X) - 1$, thus $p$ is a point on an irreducible component $Y$ of $X\cap H^\prime$ of that dimension. Then $H\cap Y$ must be a proper closed subset of $Y$, thus of smaller dimension, and therefore contained in one of the irreducible components of $X\cap H$.

Let $X_1,\ldots,X_r, Y, Z_1,\ldots, Z_m$ be the irreducible components of $X\cap H^\prime$, all considered with reduced scheme structures. Here $X_1,\ldots, X_r$ are the irreducible components of $X\cap H$.

Since $X\cap H$ is reduced, we have $$\deg(X\cap H) = \sum_{i = 1}^r\deg(X_i),$$ while $$\deg(X\cap H^\prime) = \sum_{i = 1}^r m_{X_i}(X, H^\prime) \deg(X_i) + m_{Y}(X, H^\prime)\deg(Y) + \sum_{i = 1}^m m_{Z_i}(X,H^\prime)\deg(Z_i).$$ The notation $m_Z(A,B)$ stands for the intersection multiplicity of the intersection of two varieties $A,B$ along an irreducible component $Z$ of $A\cap B$, as in \cite{eisenbud_harris1}.

This is a contradiction since the intersection multiplicities are positive and we must have $\deg(X\cap H) = \deg(X\cap H^\prime)$, see \cite[Theorem I.7.7]{hartshorne1}.
\end{proof}

\begin{lemma}
\label{curvestrict}
Let $C$ be a nondegenerate, smooth, and irreducible curve in $\PP^n$. Then $\mathfrak{L}(X)$ is strictly increasing.
\end{lemma}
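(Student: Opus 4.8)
The plan is to establish the consecutive strict inequality $\mathfrak{L}_j(C) < \mathfrak{L}_{j+1}(C)$ for every $0 \le j \le n-2$; together with $\mathfrak{L}_0(C) = 1$ and $\mathfrak{L}_{n-1}(C) = \deg(C)$ this yields strict monotonicity of the whole sequence. Fix such a $j$ and let $L$ be a $j$-plane attaining the maximum, so that $C\cap L = \{p_1,\dots,p_r\}$ is reduced with $r = \mathfrak{L}_j(C)$. Since $j \le n-2$, the hyperplanes containing $L$ form a positive-dimensional linear subvariety $T \subseteq (\PP^n)^\vee$, and I would first trade $L$ for a well-chosen hyperplane through it: combining Lemma \ref{specificbertini} (to control the section away from $C\cap L$) with Lemma \ref{tangentcut} (to control it at $p_1,\dots,p_r$, which is legitimate as $\dim L = j < n-1$), a general $H \in T$ has $C\cap H$ smooth, hence reduced, consisting of $\deg(C)$ distinct points $p_1,\dots,p_r,q_1,\dots,q_s$.

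The crux is the observation that reducedness of a hyperplane section is inherited by every linear subvariety sitting inside the hyperplane. Concretely, if $x \in C\cap H$ and $C\cap H$ is reduced (equivalently smooth) at $x$, then by Lemma \ref{tangspacecomm} and the Jacobian criterion $\mathbb{T}_x(C)\cap H = \{x\}$; hence for any $L' \subseteq H$ containing $x$ one has $\mathbb{T}_x(C\cap L') = \mathbb{T}_x(C)\cap L' \subseteq \mathbb{T}_x(C)\cap H = \{x\}$, so $C\cap L'$ is automatically reduced at $x$. Granting that some $q_k$ lies off $L$, I would take $L' := \langle L, q_1\rangle \subseteq H$, a $(j+1)$-plane. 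Then $C\cap L' \subseteq C\cap H$ is finite and contains $p_1,\dots,p_r,q_1$, and by the inheritance just described it is reduced at each of its points. This exhibits a $(j+1)$-plane with reduced zero-dimensional intersection of at least $r+1$ points, giving $\mathfrak{L}_{j+1}(C) \ge r+1$.

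It remains to justify that $s \ge 1$, that is, that the reduced section $C\cap H$ genuinely contains a point outside $L$; equivalently $\mathfrak{L}_j(C) < \deg(C)$ for $j \le n-2$. Suppose instead $r = \deg(C)$, so that $C\cap L = \{p_1,\dots,p_r\}$ already accounts for the full degree. Choosing any $x \in C\setminus L$ (possible since $C$ is infinite while $C\cap L$ is finite), the span $\langle L, x\rangle$ has dimension $\le n-1$ and so lies in some hyperplane $H_x$; as $C$ is nondegenerate it is not contained in $H_x$, whence $C\cap H_x$ is finite of degree $\deg(C)$, yet it contains the $\deg(C)+1$ distinct points $p_1,\dots,p_r,x$, a contradiction. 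I expect this last step, pinning down that an optimal lower-dimensional secant space cannot already capture the full degree, together with the reducedness-inheritance observation, to be the main content; the remainder is bookkeeping with Bertini's theorem as packaged in Lemmas \ref{specificbertini} and \ref{tangentcut}.
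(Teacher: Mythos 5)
Your proof is correct, and its skeleton matches the paper's: use Lemmas \ref{specificbertini} and \ref{tangentcut} to pass from $L$ to a hyperplane $H\supseteq L$ with $C\cap H$ reduced, exhibit a point of $C\cap H$ off $L$, and descend to a $(j+1)$-plane $L'$ inside $H$ through that point. The one genuine divergence is how the extra point is produced. The paper invokes Lemma \ref{gennondegen}: the reduced section $C\cap H$ is nondegenerate in $H$, so its points span $H$ and cannot all lie in the smaller subspace $L$. You instead show directly that $\mathfrak{L}_j(C)<\deg(C)$ for $j\le n-2$ by a B\'ezout count: if a $j$-plane already captured $\deg(C)$ reduced points, adjoining one more point of $C$ and enclosing everything in a hyperplane would force $\deg(C)+1$ distinct points into a zero-dimensional scheme of degree $\deg(C)$. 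This is more elementary (it bypasses the intersection-multiplicity argument behind Lemma \ref{gennondegen}), though it yields only the existence of \emph{some} point of $C\cap H$ off $L$ rather than the stronger spanning statement, which is all that is needed here. You also make explicit, via Lemma \ref{tangspacecomm} and the Jacobian criterion, why $C\cap L'$ is again reduced — a point the paper's proof leaves implicit — so on that step your write-up is actually more complete than the original.
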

\begin{proof}
We must show that given any linear subvariety $L$ of dimension $\dim(L) < n - 1$ with reduced intersection with $X$, there is a linear subvariety $L^\prime$ of dimension one greater that also has reduced intersection with $C$ such that $|C\cap L| < |C\cap L^\prime|$. 

The set of all hyperplanes containing $L$ is a linear subvariety of $(\PP^n)^\vee$ which induces a positive-dimensional linear system on $C$. By Bertini's theorem and Lemma~\ref{tangentcut}, the general hyperplane in this linear system has reduced intersection with $C$.

Pick any such hyperplane $H$. By Lemma \ref{gennondegen}, the points of $C\cap H$ span $H$. Thus $L$ cannot contain all of the points in $C\cap H$; there must be at least one point $p$ in $(C\cap H)\setminus (C\cap L)$.

If $\dim(L) = n - 2$, then we are done; if $L$ is taken to be a linear subvariety realizing $\mathfrak{L}_{n - 2}(C)$, then we have shown $\mathfrak{L}_{n - 2}(C) < \mathfrak{L}_{n - 1}(C)$.

Otherwise, we can pick a linear subvariety $L^\prime$ of dimension one greater than $\dim(L)$ and with $L\subseteq L^\prime \subseteq H$, and $p\in L^\prime$. Thus $\mathfrak{L}_{\dim(L)}(C) < \mathfrak{L}_{\dim(L) + 1}(C)$.
\end{proof}

The idea to get the general result is to reduce to the case of a curve when dealing with a higher dimensional variety.

\begin{theorem}
\label{genstrict}
Suppose $X$ is a smooth nondegenerate subvariety of $\PP^n$. Then $\mathfrak{L}(X)$ is strictly increasing.
\end{theorem}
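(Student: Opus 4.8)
The plan is to establish the single-step inequality $\mathfrak{L}_j(X) < \mathfrak{L}_{j+1}(X)$ for every $0 \le j \le \codim(X) - 1$, and to deduce it from the curve case (Lemma~\ref{curvestrict}) by cutting $X$ down to a curve with a linear space that contains a fixed optimal $L$. Write $d = \dim(X)$ and $m = \codim(X)$, fix $j$, and let $L$ be a $j$-dimensional linear subvariety realizing $\mathfrak{L}_j(X)$, so that $X \cap L = \{p_1, \ldots, p_r\}$ is reduced and zero-dimensional with $r = \mathfrak{L}_j(X)$. The key geometric input I would isolate is a linear subvariety $\Lambda \supseteq L$ of dimension $m+1$ for which $C := X \cap \Lambda$ is a smooth, irreducible, nondegenerate curve in $\Lambda \cong \PP^{m+1}$ with $C \cap L = X \cap L$. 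Note that $C$ then has codimension $m$ in $\Lambda$, so $\mathfrak{L}(C)$ has the same length $m+1$ as $\mathfrak{L}(X)$, and that $\dim L = j \le m - 1 < \codim C$, which is exactly the range where the curve argument applies.

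To produce $\Lambda$, I would slice iteratively: set $X_0 = X$ and $X_i = X_{i-1} \cap H_i$, where $H_1, \ldots, H_{d-1}$ are hyperplanes of $\PP^n$ chosen generally among those containing $L$, so that $\Lambda = H_1 \cap \cdots \cap H_{d-1}$ has dimension $m+1$ and contains $L$. By induction I expect each $X_i$ to be a smooth, irreducible, nondegenerate subvariety of dimension $d - i$ with $X_i \cap L = X \cap L$ reduced. Indeed, since $L \subseteq H_i$ we have $X_i \cap L = X_{i-1} \cap L = X \cap L$ scheme-theoretically at every step; smoothness away from $X \cap L$ comes from Lemma~\ref{specificbertini} applied to the smooth variety $X_{i-1}$, smoothness at the points $p_1, \ldots, p_r$ from Lemma~\ref{tangentcut} (which applies because $\dim L = j \le m - 1 < n - 1$ and $X_{i-1} \cap L$ is reduced zero-dimensional), and a single $H_i$ can be taken general enough to satisfy both open conditions simultaneously. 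Irreducibility of $X_i$ while $\dim X_{i-1} \ge 2$ follows from Bertini's irreducibility theorem, and nondegeneracy of the reduced section $X_i$ in its ambient linear space follows from Lemma~\ref{gennondegen}. Taking $C = X_{d-1}$ yields the desired curve; when $d = 1$ the iteration is empty and $C = X$ already.

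With $\Lambda$ in hand, I would run the argument of Lemma~\ref{curvestrict} on $C$ and $L$ inside $\Lambda \cong \PP^{m+1}$: since $\dim L < \codim C$, that proof furnishes a linear subvariety $L' \subseteq \Lambda$ of dimension $j+1$ with $C \cap L'$ reduced and $|C \cap L'| > |C \cap L| = r$. The final, essential observation is that $L' \subseteq \Lambda$ forces $X \cap L' = (X \cap \Lambda) \cap L' = C \cap L'$, so that $X \cap L'$ is itself finite and reduced with $|X \cap L'| = |C \cap L'| > r$. Hence $L' \in \Lambda_{j+1}(X)$ and $\mathfrak{L}_{j+1}(X) \ge |X \cap L'| > r = \mathfrak{L}_j(X)$, which is the claim.

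I expect the main obstacle to be the construction in the second paragraph: verifying that a single general hyperplane through $L$ simultaneously preserves smoothness (both on $X_{i-1}$ away from $X \cap L$ and at the points of $X \cap L$), irreducibility, and nondegeneracy at each stage, and that these properties persist through all $d-1$ cuts so the resulting curve meets every hypothesis of Lemma~\ref{curvestrict}. The tools for each individual step are already available (Lemmas~\ref{specificbertini}, \ref{tangentcut}, and \ref{gennondegen} together with Bertini's irreducibility theorem), so the remaining work lies in assembling them into a clean induction and in confirming that the intersection with $L$ is genuinely unchanged scheme-theoretically at every stage.
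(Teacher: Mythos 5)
Your proposal is correct and follows essentially the same route as the paper: fix an optimal $L$, slice $X$ down to a smooth irreducible nondegenerate curve $C = X \cap \Lambda$ with $\Lambda \supseteq L$ of dimension $\codim(X)+1$ (using Lemmas~\ref{specificbertini}, \ref{tangentcut}, and \ref{gennondegen} at each stage), and then invoke Lemma~\ref{curvestrict} inside $\Lambda$. The only cosmetic difference is that you obtain irreducibility of the hyperplane sections from Bertini's irreducibility theorem, whereas the paper deduces it from smoothness together with connectedness via the Fulton--Hansen theorem; both are valid over $\CC$.
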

\begin{proof}
By Lemma \ref{curvestrict}, we may assume $\dim(X) > 1$. Suppose that $L$ is a linear subvariety of dimension $r < \codim(X)$ so that $X\cap L$ is reduced and zero-dimensional, and $|X\cap L| = \mathfrak{L}_r(X)$.

By Lemma \ref{specificbertini} and Lemma \ref{tangentcut}, there is a hyperplane $H$ containing $L$ so that $X\cap H$ is smooth. The Fulton-Hansen connectedness theorem \cite{fulton_hansen1} implies any hyperplane section of $X$ is connected, so the hypothesis that $X\cap H$ is smooth implies it is also irreducible. Lemma~\ref{gennondegen} then shows $X\cap H$ is nondegenerate as a subvariety of $H$.

Thus by induction, we may assume the existence of a linear subvariety $T$ containing $L$ of dimension $n - \dim(X) + 1$ so that $T\cap X$ is a smooth, irreducible, nondegenerate curve in $T$. By Lemma \ref{curvestrict}, there exists a linear subvariety $L\subseteq L^\prime\subseteq T$ of dimension $r + 1$ such that $$|X\cap T\cap L^\prime| > |X\cap T\cap L|.$$ Therefore $$\mathfrak{L}_{r+1}(X) > \mathfrak{L}_r(X).$$
\end{proof}

Note, as a minor consequence, this gives a slightly different way to think about the degree lower bound that all nondegenerate projective subvarieties satisfy, see for instance \cite{eisenbud1}. For smooth nondegenerate $X$, the fact that $\mathfrak{L}(X)$ is strictly increasing forces $\deg(X) \geq \codim(X) + 1$.

That $X$ is a variety is also essential. Secant indices of smooth, pure-dimensional, nondegenerate, and reduced subschemes do not necessarily form strictly increasing sequences.

\begin{example}
Consider the smooth curve $C$ in $\PP^3$ that is the union of three skew lines $L_1,L_2,L_3$ all passing through another line $L$. Then $\mathfrak{L}_1(C) = 3$, as $L\cap C$ consists of three distinct points, but any plane that contains $L$ and meets $C$ at points outside of $L\cap C$ must contain one of $L_1,L_2,L_3$. So $\mathfrak{L}(C) = (1,3,3)$ in this case.
\end{example}

One other immediate and basic consequence of the strictly increasing property is recovering the known upper bound for the cardinality of intersections with extremal secant spaces.

\begin{proposition}
\label{seqbound}
Tautologically, $$\mathfrak{L}_i(X) = \deg(X) - \sum_{j = i}^{\codim(X) - 1}(\mathfrak{L}_{j + 1}(X) - \mathfrak{L}_{j}(X)),$$ for $i = 0,\ldots, \codim(X) - 1$. Thus in particular, if $X$ is a smooth nondegenerate subvariety of $\PP^n$, each difference $\mathfrak{L}_{j + 1}(X) - \mathfrak{L}_{j}(X)$ is at least $1$, and so $$\mathfrak{L}_i(X) \leq \deg(X) - \codim(X) + i$$ for each $i$.
\end{proposition}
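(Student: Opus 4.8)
The plan is to prove the telescoping identity first and then read off the inequality as a direct corollary of Theorem~\ref{genstrict}. The identity
\[
\mathfrak{L}_i(X) = \deg(X) - \sum_{j = i}^{\codim(X) - 1}\bigl(\mathfrak{L}_{j + 1}(X) - \mathfrak{L}_{j}(X)\bigr)
\]
is purely formal: the right-hand sum telescopes to $\mathfrak{L}_{\codim(X)}(X) - \mathfrak{L}_i(X)$, so the expression collapses to $\deg(X) - \mathfrak{L}_{\codim(X)}(X) + \mathfrak{L}_i(X)$. Hence the only input needed to establish the identity is that the top index evaluates to the degree, i.e. $\mathfrak{L}_{\codim(X)}(X) = \deg(X)$. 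First I would justify this boundary value: when $j = \codim(X)$ a general linear subvariety $L$ of that dimension meets $X$ in a zero-dimensional reduced scheme of length exactly $\deg(X)$, and for smooth $X$ a general such $L$ gives $\deg(X)$ distinct reduced points, so the set-theoretic count coincides with the degree and $\mathfrak{L}_{\codim(X)}(X) = \deg(X)$. This is the one place where geometry rather than bookkeeping enters, though it is essentially the definition of degree together with genericity; the word \emph{Tautologically} in the statement signals that the author intends the identity to be immediate once this is granted.

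With the identity in hand, the inequality follows instantly. By Theorem~\ref{genstrict}, for smooth nondegenerate $X$ the sequence $\mathfrak{L}(X)$ is strictly increasing, so each consecutive difference $\mathfrak{L}_{j+1}(X) - \mathfrak{L}_j(X)$ is an integer that is at least $1$. The sum in the identity ranges over $j = i, i+1, \ldots, \codim(X) - 1$, which is $\codim(X) - i$ terms, so I would bound the sum below by $\codim(X) - i$. Substituting into the identity gives
\[
\mathfrak{L}_i(X) = \deg(X) - \sum_{j = i}^{\codim(X) - 1}\bigl(\mathfrak{L}_{j + 1}(X) - \mathfrak{L}_{j}(X)\bigr) \leq \deg(X) - \bigl(\codim(X) - i\bigr),
\]
which is exactly $\mathfrak{L}_i(X) \leq \deg(X) - \codim(X) + i$, as desired.

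There is no real obstacle here, which matches the flavor of the statement; the main point to be careful about is simply that I invoke strict growth (Theorem~\ref{genstrict}) only where its hypotheses hold, namely for smooth nondegenerate $X$, and that the differences being integers is what lets me pass from ``positive'' to ``at least $1$.'' The closest thing to a subtlety is the boundary identification $\mathfrak{L}_{\codim(X)}(X) = \deg(X)$, but this is already built into the setup of Section~1, where the sequence is defined to end at $\deg(X)$. Accordingly I would keep the proof very short, essentially recording the telescoping computation and then citing Theorem~\ref{genstrict} to replace each difference by $1$.
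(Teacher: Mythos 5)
Your proposal is correct and matches the paper's intent exactly: the paper gives no separate proof, treating the identity as a telescoping tautology (using that the top index equals $\deg(X)$, which is built into the definition of the sequence) and deducing the bound by replacing each difference with $1$ via Theorem~\ref{genstrict}. Your only addition is to spell out the genericity argument behind $\mathfrak{L}_{\codim(X)}(X) = \deg(X)$, which is a reasonable thing to record but not a departure from the paper.
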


See also Kwak \cite{kwak1}. When $i = 1$, this bound has been used as evidence for the Eisenbud-Goto regularity conjecture.

\section{Questions and a guiding principle}

One of our main interests is finding a means of computing the sequence of secant indices for a given smooth subvariety $X\subseteq \PP^n$ but this seems difficult in general. It is clear that one method of obtaining a term-wise lower bound for $\mathfrak{L}(X)$ is to take a linear subvariety $L\subseteq \PP^n$ of dimension $\codim(X)$ so that $X\cap L$ is finite and reduced, and compute the sequence $\mathfrak{L}(X\cap L)$ where $X\cap L$ is considered as a subscheme of $L\cong \PP^{\codim(X)}$.

First, for two integer sequences $(a_j)_{j = 1}^r, (b_j)_{j = 1}^r$ of the same length $r$, we write $(a_j)_{j = 1}^r \preceq (b_j)_{j = 1}^r$ if $a_j \leq b_j$ for each $j$. This is a partial order on the set of all integer sequences of the same length. Additionally, we can define a total order on that set by stating $(a_j)_{j = 1}^r \leq (b_j)_{j = 1}^r$ if and only if either the sequences are equal, or there is a $1\leq k \leq r$ such that $a_k < b_k$, and $a_j = b_j$ for every $j > k$. 

\begin{proposition}
Let $X\subseteq\PP^n$ be any smooth subvariety, and let $H\subset \PP^n$ be any hyperplane not containing $X$ so that $X\cap H$ is also smooth. Then $$\mathfrak{L}(X\cap H)\preceq \mathfrak{L}(X)$$ where $X\cap H$ is considered as a subvariety of $H\cong \PP^{n-1}$.
\end{proposition}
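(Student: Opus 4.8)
The plan is to show the two sequences have the same length and then to establish the inequality term by term by ``lifting'' each optimal linear subvariety in $H$ to the same linear subvariety viewed in $\PP^n$. First I would record the dimension count: since $H$ does not contain the irreducible variety $X$, the hyperplane section $X\cap H$ is pure of dimension $\dim(X) - 1$, so its codimension in $H\cong \PP^{n-1}$ is $(n-1) - (\dim(X) - 1) = n - \dim(X) = \codim(X)$. Hence $\mathfrak{L}(X\cap H)$ and $\mathfrak{L}(X)$ are both sequences of length $\codim(X) + 1$, and it makes sense to compare them term by term. Fix $j$ with $0\leq j\leq \codim(X)$ and let $L\subseteq H$ be a $j$-dimensional linear subvariety realizing $\mathfrak{L}_j(X\cap H)$, that is, $L\in \Lambda_j(X\cap H)$ with $|(X\cap H)\cap L| = \mathfrak{L}_j(X\cap H)$.

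The key step is the scheme-theoretic identity $X\cap L = (X\cap H)\cap L$, valid because $L\subseteq H$. On ideal sheaves this is immediate: $\mathcal{I}_H\subseteq \mathcal{I}_L$, so $\mathcal{I}_{X\cap H} + \mathcal{I}_L = \mathcal{I}_X + \mathcal{I}_H + \mathcal{I}_L = \mathcal{I}_X + \mathcal{I}_L$, which is precisely the ideal sheaf of $X\cap L$. Consequently the intersection $X\cap L$, computed in $\PP^n$, is literally the same zero-dimensional scheme as $(X\cap H)\cap L$ computed in $H$; in particular it is finite and reduced. Regarding $L$ now as a $j$-dimensional linear subvariety of $\PP^n$, this says exactly that $L\in \Lambda_j(X)$ and that $|X\cap L| = |(X\cap H)\cap L| = \mathfrak{L}_j(X\cap H)$.

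Taking the maximum over $\Lambda_j(X)$ then gives $\mathfrak{L}_j(X)\geq |X\cap L| = \mathfrak{L}_j(X\cap H)$, and since $j$ was arbitrary this is the desired $\mathfrak{L}(X\cap H)\preceq \mathfrak{L}(X)$. I do not expect a serious obstacle here: the content is entirely the observation that a linear subvariety sitting inside $H$ cannot tell the difference between cutting $X$ and cutting $X\cap H$. The one point requiring care is precisely this scheme-theoretic identity, which guarantees that reducedness of the intersection is preserved when the ambient space is enlarged from $H$ to $\PP^n$, so that the lifted $L$ genuinely lies in $\Lambda_j(X)$ and not merely in $\Lambda_j(X\cap H)$; the smoothness of $X\cap H$ enters only to guarantee that $X\cap H$ is a reduced, pure-dimensional subscheme, so that $\mathfrak{L}(X\cap H)$ is defined in the first place.
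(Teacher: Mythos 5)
Your proof is correct. The paper states this proposition without any proof, treating it as immediate, and your argument --- the scheme-theoretic identity $X\cap L = (X\cap H)\cap L$ for $L\subseteq H$, which shows every $L\in\Lambda_j(X\cap H)$ also lies in $\Lambda_j(X)$ with the same intersection --- is exactly the justification the paper leaves implicit.
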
 

A natural question then is to ask when is this all that needs to be done.

\begin{question}
\label{realizationquestion}
For what subvarieties $X$ of $\PP^n$ is $\mathfrak{L}(X)$ realized by $\mathfrak{L}(X\cap L)$ for a linear subvariety $L$ of dimension $\codim(X)$ such that $X\cap L$ is finite and reduced? Further, when is $\mathfrak{L}(X)$ realized as a sequence of the form $$(|X\cap (H_1\cap \ldots\cap H_n)|,\ldots,|X\cap (H_1\cap\ldots\cap H_{\dim(X)})|),$$ for linearly independent hyperplanes $H_1,\ldots, H_n$?
\end{question}

\begin{question}
\label{greedyquestion}
Consider $Y := X\cap L$ for a linear subvariety $L$ of dimension $\codim(X)$ with $X\cap L$ reduced and finite. We can define two integer sequences:
\begin{enumerate}
\item $$(\max\{|Y\cap L_0|\},\ldots,\max\{|Y\cap L_{\codim(X)}|\}),$$
\item $$\max\{(|Y\cap L_0|,\ldots,|Y\cap L_{\codim(X)}|)\}.$$
\end{enumerate} Here each $L_j$ denotes a linear subvariety of dimension $j$ contained inside $L$. The maximums in (1) are taken over all possible $L_j$, and the maximum in (2) is taken over all chains $L_0\subseteq \ldots\subseteq L_{\codim(X)}$ using the total order $\leq$ defined above. Is it always the case that these two sequences are the same?
 \end{question}

Note that the answer to Question \ref{greedyquestion} is negative for arbitrary finite subsets $Y$ of $\PP^n$.

\begin{example}
Let $Y$ be a finite set consisting of $3$ points $p_1,p_2,p_3$ on a line $T$ and $5$ points $q_1,\ldots,q_5$ on a plane $H$ in $L = \PP^3$. Suppose $T$ is not contained in $H$, and $p_1,p_2,p_3$ are not the point of intersection $T\cap H$. Suppose also that the five points on $H$ are arranged so that no three of them are collinear, and that no two of them lie on a line containing $T\cap H$. Then the sequence from (1) of Question \ref{greedyquestion} is $$(1, 3, 5, 8),$$ and the sequence from (2) is $$(1, 2, 5, 8).$$
\end{example}

At least for the case of Veronese varieties $X$ considered in Section 4, the truth of the Eisenbud-Green-Harris conjecture would positively answer Question \ref{realizationquestion}. In the same section we will also prove that the two sequences of Question \ref{greedyquestion} are equal when $X$ is a Veronese variety, as a consequence of the Clements-Lindstr\"om theorem.

For any finite set $Y$ of points in $\PP^n$, computing $\mathfrak{L}(Y)$ is equivalent to considering the dimensions of the spans of all subsets of of $Y$, considered as subsets of points in the vector space $k^{n+1}$. Another question then becomes to ask for a subvariety $X$ of $\PP^n$ about what possible linear dependences between the $\deg(X)$ points of a reduced and finite intersection $X\cap L$ where $L$ is linear of dimension $\codim(X)$ occur as $L$ is varied among all such linear subvarieties. One could phrase this in terms of matroids.

\begin{question}
Let $X\subseteq \PP^n$ be a variety. For each linear subvariety $L$ of dimension $\codim(X)$ and $X\cap L$ finite and reduced, the subsets of linearly independent points of $X\cap L$ considered inside $k^{n+1}$ form a matroid. What matroids can be realized in this way? 
\end{question}

Finally, what sequences of integers can be realized as a sequence of secant indices?

\begin{question}
If $X$ is a smooth, nondegenerate subvariety of $\PP^n$, $\mathfrak{L}(X)$ is a strictly increasing sequence of integers from $1$ to $\deg(X)$ of length $\codim(X) + 1$. What strictly increasing sequences of this length from $1$ to $\deg(X)$ occur in this way?
\end{question}

\begin{question}
For $X$ for which $\deg(X)$ exceeds $\codim(X) + 1$, in what positions and in what sizes do the gaps in $\mathfrak{L}(X)$ occur?
\end{question}

To our knowledge, the above questions have received little prior study, if any at all. For this last question, some of the related work on classifying varieties with extremal secant spaces provides a partial result about the existence of a gap between the last two terms of $\mathfrak{L}(X)$. One such result is the following due to Kwak \cite[Proposition 3.2]{kwak1}.

\begin{proposition}
Let $X$ be a nondegenerate subvariety of $\PP^n$ of dimension $\geq 1$, and codimension $\geq 2$. If $X$ has an extremal curvilinear secant subspace in at least one of the dimensions $1,\ldots, \codim(X) - 1$, then $X$ is 
\begin{enumerate}
\item a Veronese surface in $\PP^5$,
\item a projected Veronese surface in $\PP^4$,
\item a rational scroll, that is, a projective bundle over a smooth curve. 
\end{enumerate}
\end{proposition}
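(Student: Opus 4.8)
The plan is to prove this as a classification statement by combining two reductions, both available from the Bertini-type machinery already developed, with two external inputs: the link between multisecant lines and Castelnuovo--Mumford regularity, and the del Pezzo--Bertini classification of varieties of minimal degree. Throughout write $d = \deg(X)$ and $c = \codim(X)$, and recall from Proposition~\ref{seqbound} that the existence of an extremal $j$-secant means precisely that the bound $\mathfrak{L}_j(X) = d - c + j$ is attained. Since the successive differences in $\mathfrak{L}(X)$ are each at least $1$ and the sequence terminates at $\mathfrak{L}_c(X) = d$, attaining the bound at $j$ forces $\mathfrak{L}_k(X) = d - c + k$ for every $k \geq j$; in particular one always obtains an extremal $(c-1)$-secant $\PP^{c-1}$ meeting $X$ in $d - 1$ reduced points. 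So I may assume from the outset that $X$ carries such a $(c-1)$-secant.

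First I would reduce to the case of a curve. Let $L \cong \PP^{c-1}$ be the extremal secant. Because $c - 1 < n - 1$, there are hyperplanes containing $L$, and by Lemma~\ref{specificbertini} and Lemma~\ref{tangentcut} a general such $H$ meets $X$ in a smooth scheme; by Fulton--Hansen \cite{fulton_hansen1} together with Lemma~\ref{gennondegen}, the section $X \cap H$ is a smooth, irreducible, nondegenerate subvariety of $H \cong \PP^{n-1}$ of the same degree and codimension. Since $L \subseteq H$ we have $(X \cap H) \cap L = X \cap L$, so $L$ remains an extremal $(c-1)$-secant of $X \cap H$, and the bound of Proposition~\ref{seqbound} for $X \cap H$ is identical to that for $X$. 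Iterating exactly as in the proof of Theorem~\ref{genstrict}, I arrive at a smooth, irreducible, nondegenerate curve $C \subseteq \PP^{c+1}$ of degree $d$ still carrying an extremal $(c-1)$-secant. The crucial point is that extremality is preserved verbatim, because neither the degree nor the codimension changes under these hyperplane sections.

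The second step is to feed an extremal secant line into the regularity bound. A suitable inner projection from a general $\PP^{c-2}$ contained in the secant span collapses the extremal secant of $C$ to a genuine multisecant line, after which Kwak's result \cite{kwak1} that an $m$-multisecant line forces $\mathrm{reg} \geq m$, combined with the Eisenbud--Goto bound \cite{eisenbud_goto1} (a theorem for curves, by Gruson--Lazarsfeld--Peskine), pins the regularity at its extremal value $d - c + 1$. Curves of maximal regularity are classified, and since this must hold for the general curve section of $X$, the degree of $X$ is forced to be minimal, $d = c + 1$, or one unit larger in a single exceptional case. I would then invoke the del Pezzo--Bertini classification of smooth varieties of minimal degree: in codimension $\geq 2$, and excluding cones (which smoothness forbids), these are precisely the Veronese surface in $\PP^5$ and the rational normal scrolls, while the almost-minimal $\Delta$-genus-one case produces the projected Veronese surface in $\PP^4$. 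Matching these against the hypotheses yields the three listed families.

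The main obstacle is the passage from ``extremal secant in some dimension'' to a usable statement about the whole variety, and this splits into two genuinely delicate pieces. The first is the projection that converts a higher-dimensional extremal secant into a secant line: the center must be chosen inside the secant span so that the projected variety remains nondegenerate, of codimension at least two, with the residual intersection points staying distinct, so that the multisecant hypothesis of \cite{kwak1} actually applies. The second is the lifting problem --- reconstructing $X$ from the structure of its general hyperplane (and eventually curve) sections, and verifying that no cones or degenerate configurations survive the smoothness and connectedness constraints. I expect the lifting, together with the careful bookkeeping of how the extremal secant degenerates under projection, to be where the real work lies; the del Pezzo--Bertini input and the reduction to curves are, by contrast, essentially formal once extremality is known to be preserved.
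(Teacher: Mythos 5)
The paper does not prove this proposition: it is quoted verbatim from Kwak \cite[Proposition 3.2]{kwak1} and used as a black box to deduce Corollary~\ref{classificationextremal}, so there is no internal proof to compare your argument against. Judged on its own terms, your sketch follows a recognizable strategy from the literature (slice to curves, convert the extremal secant to a multisecant line, invoke regularity bounds, classify, lift), but it has concrete gaps. First, your opening reduction conflates the two notions that the paper is careful to keep separate: an \emph{extremal curvilinear secant subspace} is defined by $\operatorname{length}(X\cap L)=\deg(X)-\codim(X)+\dim(L)$ and permits non-reduced intersections, whereas $\mathfrak{L}_j(X)$ counts only reduced ones. The implication used in Corollary~\ref{classificationextremal} goes from reduced to curvilinear, not the other way, so ``extremal curvilinear $j$-secant exists'' does not give $\mathfrak{L}_j(X)=d-c+j$, and the propagation to all $k\geq j$ via Proposition~\ref{seqbound} (which also needs $X$ smooth, a hypothesis absent from the statement as printed) is not available. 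You would need a separate argument extending a curvilinear extremal $\PP^j$ to a curvilinear extremal $\PP^{j+1}$ while preserving finiteness and the tangent-space condition.

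Second, the projection step has an indexing error with real consequences: projecting $\PP^{c+1}$ from a general $\PP^{c-2}$ contained in $L\cong\PP^{c-1}$ lands in $\PP^{2}$ and collapses $L$ to a point, so no multisecant line is produced; you need the center to be a $\PP^{c-3}\subseteq L$ to land in $\PP^3$ with $L$ mapping to a line, and you must then verify birationality, preservation of degree and nondegeneracy, and that the length of the secant intersection survives. Third, and most seriously, the conclusion you extract --- that $\deg(X)$ is forced to be minimal, so that del Pezzo--Bertini applies --- is false and is contradicted by the statement itself: the projected Veronese surface in $\PP^4$ has degree $\codim(X)+2$, and the scrolls in Kwak's list need not be linearly normal or of minimal degree. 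What the regularity computation actually yields (via Gruson--Lazarsfeld--Peskine's classification of curves of maximal regularity) is that the general curve section is \emph{rational}, i.e.\ $X$ has sectional genus zero; the classification needed at that point is of varieties of sectional genus zero, not of minimal degree, and the lifting from the curve section back to $X$ is precisely the part you defer. As it stands the argument would wrongly exclude case (2) of the statement.
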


As an immediate corollary of this result, we can classify all varieties that do not have a gap between the final two terms of $\mathfrak{L}(\cdot)$.

\begin{corollary}
\label{classificationextremal}
Let $X$ be a nondegenerate smooth subvariety of $\PP^n$ of codimension $\geq 2$. Then unless $X$ is a rational scroll, the Veronese surface in $\PP^5$, or a projected Veronese surface in $\PP^4$, $$\mathfrak{L}_{\codim(X)}(X) - \mathfrak{L}_{\codim(X) - 1}(X) \geq 2.$$ 
\end{corollary}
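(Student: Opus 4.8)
The plan is to derive Corollary~\ref{classificationextremal} as a direct contrapositive of the preceding Proposition (Kwak's classification). The key observation is to recognize that the condition ``$\mathfrak{L}_{\codim(X)}(X) - \mathfrak{L}_{\codim(X)-1}(X) = 1$'' is exactly the statement that $X$ possesses an extremal secant subspace in dimension $\codim(X)-1$. Recall from the definition that $\mathfrak{L}_{\codim(X)}(X) = \deg(X)$, since a general linear subvariety of dimension equal to the codimension meets $X$ in exactly $\deg(X)$ reduced points. Thus the difference being $1$ forces $\mathfrak{L}_{\codim(X)-1}(X) = \deg(X)-1$, meaning there is a linear subvariety $L$ of dimension $\codim(X)-1$ with reduced intersection $|X\cap L| = \deg(X)-1$.

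First I would make precise the link between the secant index value and Kwak's notion of an extremal secant subspace. Kwak's upper bound (recovered here in Proposition~\ref{seqbound}) states $\mathfrak{L}_j(X) \leq \deg(X) - \codim(X) + j$; an \emph{extremal} secant subspace of dimension $j$ is one achieving this maximum, i.e.\ $\mathfrak{L}_j(X) = \deg(X) - \codim(X) + j$. For $j = \codim(X)-1$ this reads $\mathfrak{L}_{\codim(X)-1}(X) = \deg(X)-1$, which is precisely the equality case making the final difference equal to $1$. So having an extremal secant subspace in dimension $\codim(X)-1$ is equivalent to the negation of the corollary's inequality. I should also note that the secant subspaces realizing $\mathfrak{L}$ are by construction curvilinear (finite reduced, hence locally a reduced curvilinear scheme), so the hypothesis ``curvilinear'' in Kwak's Proposition is automatically satisfied.

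Next I would assemble the argument as follows. Suppose $\mathfrak{L}_{\codim(X)}(X) - \mathfrak{L}_{\codim(X)-1}(X) < 2$. Since by Theorem~\ref{genstrict} the sequence is strictly increasing, the difference is at least $1$, so it equals exactly $1$, giving $\mathfrak{L}_{\codim(X)-1}(X) = \deg(X)-1$. By the bound in Proposition~\ref{seqbound}, this is the maximal possible value, so $X$ has an extremal curvilinear secant subspace in dimension $\codim(X)-1$, which lies in the range $1,\dots,\codim(X)-1$ since $\codim(X) \geq 2$. Kwak's Proposition then forces $X$ to be one of the three listed types: a rational scroll, the Veronese surface in $\PP^5$, or a projected Veronese surface in $\PP^4$. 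Taking the contrapositive yields the corollary.

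The step requiring the most care is the bookkeeping that translates the gap condition into the precise extremality hypothesis of Kwak's Proposition, namely verifying that $\mathfrak{L}_{\codim(X)}(X) = \deg(X)$ and that an extremal secant subspace in the relevant dimension is exactly what the equality $\mathfrak{L}_{\codim(X)-1}(X) = \deg(X)-1$ provides. I expect no genuine obstacle here, as this is a matter of matching definitions rather than new geometry; the only subtlety is confirming that ``extremal curvilinear secant subspace'' in Kwak's sense coincides with a reduced finite intersection realizing the secant index, which follows because a reduced zero-dimensional scheme is curvilinear.
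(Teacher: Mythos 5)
Your proposal is correct and follows essentially the same route as the paper: the entire content is the dictionary between the gap condition $\mathfrak{L}_{\codim(X)-1}(X) = \deg(X)-1$ and Kwak's notion of an extremal curvilinear secant subspace (with reducedness of $X\cap L$ guaranteeing the curvilinear hypothesis), after which Kwak's classification applies and the contrapositive gives the statement. The paper phrases the curvilinearity check via $\dim(\mathbb{T}_p(X)\cap L)=0$ rather than your "reduced zero-dimensional schemes are curvilinear," but these are the same observation.
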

\begin{proof}
The only work that needs to be done is reconcile our language with that used in the cited reference. A linear subvariety $L$ is said to be a \emph{curvilinear secant subspace to $X$} if $X\cap L$ if finite with each point of $X\cap L$ locally contained in a smooth curve on $X$. This last criterion is equivalent to specifying that $\dim(\TT_p(X)\cap L)\leq 1$ for each point $p\in X\cap L$. Such an $L$ is called \emph{extremal} if its intersection contains the maximal possible number of points, counted with appropriate multiplicity, $\text{length}(X\cap L) = \deg(X) - \codim(X) + \dim(L)$.

Our point of view in this note predominantly takes the more naive route of considering only reduced intersections; we only consider linear subvarieties $L$ with $X\cap L$ reduced and finite. This ensures that $\dim(L\cap T_p(X)) = 0$ for each $p\in X\cap L$ and thus, in particular, such an $L$ is a curvilinear secant subspace to $X$.

Therefore, the result of Kwak implies that the inequality seen in Proposition \ref{seqbound} is in fact strict for each $i = 0,\ldots, \codim(X) - 1$.
\end{proof}

Kwak's result along with similar work \cite{noma1} on bounding the maximal possible lengths can be used in this way to treat the question of whether there is a gap between the final two terms of $\mathfrak{L}(X)$ for all smooth nondegenerate varieties. But it does not seem possible to use these results to provide lower bounds for the terms of the sequence, or to say more about the size of the penultimate gap and the presence of other gaps in the sequence.

Returning to the goal of computing $\mathfrak{L}(X)$, a natural attempt to reduce the complexity of this computation is to compute the indices that arise when we only use a subset of the possible linear subvarieties. In particular, it seems reasonable to expect that the secant indices are the same if we were to only consider linear subvarieties cut out by hyperplanes that meet $X$ in the most ``reducible way'' possible. This leads us to define two additional accessory sequences to $\mathfrak{L}(X)$ which in some cases, such as those considered in Sections 4 and 5, become tractable to compute. To get a precise notion that generalizes beyond Veronese and Segre varieties we will make several definitions.

\begin{definition}
Let $X$ be a smooth nondegenerate subvariety of $\PP^n$ of dimension $r$.
\begin{itemize}
\item We say $X$ is \emph{$p$-reducible} if there exists a collection of hyperplanes $H_1,\ldots, H_n$ so that their common intersection is a single point, $H_1\cap \ldots \cap H_r \cap X$ is finite and reduced, each $H_j\cap X$ for $j = 1,\ldots, r$ is reduced and has exactly $p$ distinct irreducible components, and finally $|H_1\cap\ldots\cap H_j\cap X| > |H_1\cap\ldots\cap H_{j+1}\cap X|$ for each $j = r,\ldots, n-1$. Any such sequence of hyperplanes is said to \emph{satisfy the conditions of $p$-reducibility}.
\item We call the maximal $p$ such that $X$ is $p$-reducible the \emph{reducibility} of $X$.
\item Suppose $X$ has reducibility $p$. Denote by $\Lambda_j^R(X) \subseteq \Lambda_j(X)$ the subset of linear subvarieties of dimension $j$ cut out by hyperplanes $H_1,\ldots, H_{n - j}$ such that the $H_1,\ldots,H_{n-j}$ are the initial part of a sequence of hyperplanes satisfying the conditions of $p$-reducibility.
\end{itemize}
\end{definition}

Note that by Bertini's theorem, every smooth nondegenerate subvariety of dimension $> 1$ is at least $1$-reducible.

\begin{proposition}
Let $X\subseteq \PP^n$ be a smooth nondegenerate subvariety of dimension $> 1$. Then $X$ is $1$-reducible.
\end{proposition}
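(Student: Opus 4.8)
The plan is to construct the hyperplanes $H_1,\ldots,H_n$ sequentially: first produce $H_1,\ldots,H_r$ (where $r=\dim X$) that cut $X$ down to a finite reduced nondegenerate point set while forcing every intermediate section to be irreducible, and then choose $H_{r+1},\ldots,H_n$ to realize a descending flag inside the resulting $\PP^{\codim X}$ along which the point count strictly drops. Since $p=1$, the irreducibility requirement in the definition is exactly that each $H_j\cap X$ be reduced and irreducible, and this is where I expect to use the Fulton–Hansen connectedness theorem together with the hypothesis $\dim X>1$.

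First I would build $H_1,\ldots,H_r$ by induction. Write $X_0=X$ and $X_j=X\cap H_1\cap\ldots\cap H_j$, and suppose $X_{j-1}$ is smooth, irreducible, and nondegenerate of dimension $r-j+1$ inside $L_{j-1}=H_1\cap\ldots\cap H_{j-1}$. By Lemma~\ref{specificbertini} the general hyperplane cuts $X_{j-1}$ in a smooth subvariety, and because $X$ is smooth the general hyperplane cuts the full $X$ in a smooth subvariety as well; each of these is a nonempty open condition on the hyperplane, so I may pick $H_j$ satisfying both simultaneously. For $X$ (of dimension $r>1$) and, while $\dim X_{j-1}\geq 2$, for $X_{j-1}$, the Fulton–Hansen theorem forces these smooth sections to be connected, hence irreducible, and Lemma~\ref{gennondegen} makes them nondegenerate. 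This yields condition (3) with $p=1$ for each $j=1,\ldots,r$ and continues the induction, terminating with $X_r=X\cap L_r$ a finite reduced set in $L_r\cong\PP^{\,m}$, $m=\codim X$, which is condition (2).

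Next I would use that the penultimate section $X_{r-1}$ is a smooth irreducible nondegenerate curve, so by Lemma~\ref{gennondegen} its reduced hyperplane section $Y:=X_r=X_{r-1}\cap H_r$ is nondegenerate in $L_r\cong\PP^{\,m}$. Hence $Y$ spans $L_r$ and contains $m+1$ linearly independent points $y_0,\ldots,y_m$. Setting $M_i=\langle y_i,\ldots,y_m\rangle$ gives a complete flag $L_r=M_0\supset M_1\supset\ldots\supset M_m=\{y_m\}$ with $\dim M_i=m-i$; since the $y_i$ are independent, $y_i\in (Y\cap M_i)\setminus(Y\cap M_{i+1})$, so the cardinalities $|Y\cap M_i|$ strictly decrease. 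Choosing hyperplanes $H_{r+1},\ldots,H_n$ of $\PP^n$ with $H_1\cap\ldots\cap H_{r+i}=M_i$ (at each step take a hyperplane through $M_{i+1}$ not containing $M_i$) then delivers condition (4) for $j=r,\ldots,n-1$, and since $M_m$ is a single point the common intersection $H_1\cap\ldots\cap H_n$ is that point, giving condition (1).

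The one step requiring genuine care is the coordination in the first stage: condition (3) constrains the section of the full variety $X$ by each individual $H_j$, whereas the induction needs smoothness of the iterated section $X_{j-1}\cap H_j$; these are a priori different demands. I expect this to be the main (though mild) obstacle, and it is dissolved by noting that both are dense open conditions in $(\PP^n)^\vee$ and may be imposed together, with Fulton–Hansen supplying irreducibility of the full sections precisely because $\dim X>1$. The flag construction of the second stage is then elementary once nondegeneracy of $Y$ is in hand.
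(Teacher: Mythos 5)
Your proof is correct, and its first stage coincides with the paper's: both use Bertini to produce hyperplanes $H_1,\ldots,H_r$ whose individual sections of $X$ are smooth and irreducible and whose common section is finite and reduced (the paper cites ``the classical Bertini theorem'' for irreducibility where you invoke Fulton--Hansen explicitly, exactly as the paper does in the proof of Theorem~\ref{genstrict}; your remark that smoothness of $X\cap H_j$ and of $X_{j-1}\cap H_j$ are separate dense open conditions to be imposed simultaneously is a point the paper glosses over). The second stage is where you genuinely diverge. The paper descends one dimension at a time: at each step it chooses a subset of the surviving points spanning a hyperplane $L'$ of the current linear space and then re-applies Lemmas~\ref{specificbertini} and~\ref{tangentcut} to find a new hyperplane through $L'$ whose full section of $X$ is again smooth and irreducible. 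You instead fix $m+1$ linearly independent points of the nondegenerate finite set $Y=X\cap L_r$ once and for all and take the linear flag $M_i=\langle y_i,\ldots,y_m\rangle$, with no further appeal to Bertini. Your route is more elementary and is legitimate because the definition of $p$-reducibility places no irreducibility constraint on $H_j\cap X$ for $j>r$; the paper's heavier construction buys the additional property that every $H_j\cap X$ remains irreducible, which is not needed for this proposition but is convenient for the subsequent discussion of $\mu(X)$ and $\mathfrak{H}(X)$, where components of all the sections $H_j\cap X$ meeting the finite intersection are counted.
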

\begin{proof}
First, we can find hyperplanes $H_1,\ldots,H_{\dim(X)}$ so that each $X\cap H_j$ is smooth and irreducible and so that $H_1\cap\ldots\cap H_{\dim(X)}\cap X$ is finite and reduced by the classical Bertini theorem.

Let $L = H_1\cap\ldots\cap H_{\dim(X)}$ and note that by Lemma \ref{gennondegen} the points of $X\cap L$ span $L$ since $X\cap L$ is reduced. Choose a subset of the points of $X\cap L$ that span a linear subvariety $L^\prime$ of dimension $\dim(L) - 1$. Then by Lemmas \ref{specificbertini}, \ref{tangentcut}, we may find a hyperplane $H_{\dim(X) + 1}$ not containing $L$ but containing $L^\prime$ so that $X\cap H_{\dim(X) + 1}$ is smooth and irreducible. Since $L^\prime$ has smaller dimension than $L$, we see that $$|H_1\cap\ldots\cap H_{\dim(X)}\cap X| > |H_1\cap\ldots\cap H_{\dim(X) + 1}\cap X|.$$

Repeating this process completes the needed sequence of hyperplanes satisfying the conditions of $1$-reducibility.
\end{proof}

In general, computing the reducibility of a variety seems to be an independently interesting question. However, in the cases studied in Sections 4, 5, there is no mystery about the reducibility of the varieties in consideration. The reducibility of the Segre variety $\sigma(\PP^n\times\PP^m)$ is $2$ when $n, m > 0$, and that of the Veronese variety $v^{(n)}_{d}(\PP^n)$ is $d$.

At this point, one could define a new sequence which is a lower bound for $\mathfrak{L}(X)$ at each term by modifying the definition of $\mathfrak{L}_i(X)$ to only use linear subvarieties from $\Lambda^R(X)$ instead of from $\Lambda(X)$. This sequence seems interesting, but still appears challenging to compute. One difficulty in computing this new sequence is controlling how many irreducible components must be considered. To define the two accessory sequences to $\mathfrak{L}(X)$ we will restrict the number of components.

\begin{definition}
Let $X\subseteq\PP^n$ be a smooth nondegenerate subvariety with reducibility $p$. Consider the set of all sequences of hyperplanes $H_1,\ldots,H_n$ satisfying the conditions of $p$-reducibility.
\begin{itemize}
\item For each, consider the number of irreducible components in the union $\bigcup (H_j\cap X)$ which have nonempty intersection with $H_1\cap\ldots\cap H_{\dim(X)}\cap X$. Denote by $\mu(X)$ the minimal number of such irreducible components attained by the union of one of these sequences.
\item Denote by $\mathfrak{H}(X)$ the set of all sequences of hyperplanes $(H_1,\ldots,H_n)$ satisfying the conditions of $p$-reducibility and with the number of irreducible components of $\bigcup (H_j\cap X)$, which each have nonempty intersection with $H_1\cap\ldots\cap H_{\dim(X)}\cap X$, equal to $\mu(X)$.
\end{itemize}
\end{definition}

Note that $\mu(X) \geq p\dim(X)$ always, as the number of irreducible components in $\bigcup_{i = 1}^{\dim(X)}H_i\cap X$ is $p\dim(X)$ for any sequence of hyperplanes $(H_1,\ldots,H_n)$ satisfying the conditions of $p$-reducibility. Once more, it seems to be an interesting question for arbitrary $X$ what the value of $\mu(X)$ is. However, the for both the Veronese and Segre varieties we will consider, the minimal possible value $p\dim(X)$ is attained. We now are able to define the two accessory sequences to $\mathfrak{L}(X)$.

\begin{definition}
Let $X$ be a smooth nondegenerate subvariety of $\PP^n$ with reducibility $p$. Then if $\codim(X) = m$, we define for each $0\leq j\leq m$ an integer $${\mathfrak{R}\mathfrak{L}}_j(X) := \max \{|X\cap L|\mid L = \bigcap_{i = 1}^{n - j} H_i, (H_1,\ldots,H_n)\in \mathfrak{H}(X)\},$$ called the $j$th \emph{reducible secant index}. As for the original secant indices, we denote these numbers collectively by $$\mathfrak{R}\mathfrak{L}(X) := ({\mathfrak{R}\mathfrak{L}}_0(X),\ldots,{\mathfrak{R}\mathfrak{L}}_m(X)).$$
\end{definition}

\begin{definition}
We define the sequence of \emph{greedy reducible secant indices} of $X$ as $${\mathfrak{R}\mathfrak{L}}^G(X) := \max \{(|X\cap (H_1\cap \ldots \cap H_n)|,\ldots, |X\cap (H_1\cap \ldots\cap H_{\dim(X)})|)\},$$ where this maximum is taken over all $(H_1,\ldots,H_n)\in \mathfrak{H}(X)$ and the sequences of integers are compared using the total order $\leq$ defined previously.
\end{definition}

Altogether, we see that $${\mathfrak{R}\mathfrak{L}}^G(X) \preceq {\mathfrak{R}\mathfrak{L}}(X) \preceq \mathfrak{L}(X).$$

It seems conceivable that for many subvarieties $X$ these three sequences are in fact equal. In Section 4 we will compute ${\mathfrak{R}\mathfrak{L}}^G(X), {\mathfrak{R}\mathfrak{L}}(X)$ when $X$ is a Veronese variety, and will show they are equal. There we show also that the truth of the Eisenbud-Green-Harris conjecture would imply all three sequences are equal for Veronese varieties. With so little evidence however, we refer to the expectation in general that these sequences should all be equal as a ``guiding principle'' rather than a conjecture.

\begin{principle*}
For any smooth nondegenerate subvariety $X\subseteq\PP^n$, $${\mathfrak{R}\mathfrak{L}}^G(X) = {\mathfrak{R}\mathfrak{L}}(X) = \mathfrak{L}(X).$$
\end{principle*}

\section{Veronese varieties}

In this section we consider the secant indices of the images of the Veronese embeddings. Throughout we will denote by $v^{(n)}_d : \PP^n\hookrightarrow \PP^{N_d^{(n)} - 1}$ the degree $d$ Veronese embedding of $\PP^n$ into $\PP^{N_d^{(n)} - 1}$, where $N_d^{(n)} = \binom{n + d}{n}$. In Example \ref{ratnormcurve}, we saw that $\mathfrak{L}(v^{(1)}_d(\PP^1)) = (1,2,\ldots, d)$ for every $d > 0$.

Similarly, by Theorem \ref{genstrict}, since the Veronese surface $v_2^{(2)}(\PP^2)$ has degree $4$ in $\PP^5$, that is, it is a minimal degree subvariety, we see that $\mathfrak{L}(v_2^{(2)}(\PP^2)) = (1,2,3,4)$. The degree of the image of the Veronese embedding $v_d^{(n)}$ is $d^n$, and these sequences begin to become interesting when the degree exceeds the codimension of the image by more than one.

The first case where this happens is for $X = v_3^{(2)}(\PP^2)$. This is a degree $9$ subvariety of codimension $7$ in $\PP^9$. Thus $\mathfrak{L}(X)$ has $8$ terms, but there are $9$ numbers in the sequence $1,2,3,\ldots,9$. Therefore, the sequence of secant indices of $X$ must contain exactly one gap, of size $2$. The only remaining question is where in the sequence does this gap occur. In this case, the position of the gap is explained by the classical Cayley-Bacharach theorem \cite{eisenbud_green_harris2}.

\begin{theorem}
Let $C_1, C_2$ be two cubic curves in $\PP^2$ not sharing any irreducible component and meeting in $9$ distinct points. Suppose $C_3$ is another cubic curve containing $8$ of those $9$ points. Then $C_3$ contains all $9$ points.
\end{theorem}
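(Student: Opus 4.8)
The plan is to prove the classical Cayley–Bacharach theorem for cubics by exploiting the dimension of the space of cubic forms together with the linear conditions imposed by passing through points. The statement is the $(3,3)$ case of Cayley–Bacharach, and the cleanest elementary route is a Riemann–Roch–free argument using only linear algebra on the $10$-dimensional vector space of homogeneous cubics in three variables.

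\begin{proof}[Proof proposal]
First I would set up the space $V = H^0(\PP^2, \OOs(3))$ of cubic forms, which has dimension $\binom{3+2}{2} = 10$. Write the nine intersection points as $p_1, \ldots, p_9$, and suppose $C_3$ passes through $p_1, \ldots, p_8$ but assume for contradiction that it omits $p_9$. Passing through a single point is one linear condition on $V$, so the subspace of cubics vanishing at all nine points has dimension at least $10 - 9 = 1$; since $C_1$ and $C_2$ both lie in this subspace and are distinct (they share no component, hence are not proportional), this subspace has dimension exactly $2$, which forces the nine conditions imposed by $p_1, \ldots, p_9$ to be linearly independent. The heart of the matter is then to show that the eight points $p_1, \ldots, p_8$ already impose independent conditions whose common vanishing locus is spanned by $C_1, C_2$; equivalently, that any cubic vanishing at $p_1, \ldots, p_8$ automatically vanishes at $p_9$.

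The key step I would carry out is to analyze the vanishing locus of the eight points. The subspace $W \subseteq V$ of cubics through $p_1, \ldots, p_8$ has dimension at least $10 - 8 = 2$, and it contains the independent pencil spanned by $C_1, C_2$. The crucial claim is that $\dim W = 2$, i.e. that the eight points impose independent conditions on cubics. Granting this, every cubic through the eight points is a linear combination $\lambda C_1 + \mu C_2$, and since both $C_1$ and $C_2$ vanish at $p_9$, so does every such combination; in particular $C_3 = \lambda C_1 + \mu C_2$ vanishes at $p_9$, contradicting the assumption and completing the proof.

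The main obstacle, and the step requiring genuine care, is verifying that the eight points impose independent conditions, i.e. that $\dim W = 2$ rather than larger. This is where the hypothesis that $C_1, C_2$ meet transversally in nine distinct points with no common component is essential. I would argue by contradiction: if the eight points failed to impose independent conditions, one could produce a cubic vanishing on $p_1, \ldots, p_8$ but not expressible in the pencil, and then invoke B\'ezout's theorem to derive a contradiction. Specifically, the standard technique is to show that no four of the nine points are collinear and no seven lie on a conic; for instance, if say $p_1, p_2, p_3, p_4$ were collinear on a line $\ell$, then $\ell$ would meet each cubic $C_i$ in four points, violating B\'ezout since a line and a cubic meet in only three points unless $\ell$ is a component. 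Ruling out these degenerate configurations via B\'ezout shows the points are in sufficiently general position, which yields the independence of the conditions. This casework with B\'ezout is the technical core; once the general-position facts are in hand, the dimension count closes the argument.
\end{proof}
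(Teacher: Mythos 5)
The paper itself gives no proof of this statement: it is quoted as the classical Cayley--Bacharach theorem with a citation to Eisenbud--Green--Harris, so there is no in-paper argument to measure yours against; what follows is an assessment of your proposal on its own terms. Your overall architecture is the standard one and is sound: reduce to showing that $p_1,\ldots,p_8$ impose independent conditions on the $10$-dimensional space of cubic forms, so that the cubics through these eight points form exactly the pencil spanned by $C_1$ and $C_2$, whence $C_3=\lambda C_1+\mu C_2$ vanishes at $p_9$. One sentence in your opening paragraph is, however, simply wrong: if the space of cubics through all nine points has dimension $2$, that means the nine points impose only $10-2=8$ independent conditions, so the nine conditions are \emph{not} linearly independent --- indeed the dependence of the ninth condition on the other eight is precisely the content of the theorem. (The claim that this dimension is exactly $2$, rather than merely at least $2$, is also unjustified at that stage.) Fortunately nothing downstream relies on that sentence.

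The genuine gap is the step you yourself flag as the technical core. Your B\'ezout arguments do establish the general-position facts --- no four of the nine points are collinear and no seven lie on a conic, since otherwise the line (respectively, a component of the conic) would be forced to be a common component of $C_1$ and $C_2$ --- but the passage from these facts to the statement that the eight points impose independent conditions on cubics is a separate lemma, not a formality. Proving it requires, for each $i\le 8$, the explicit construction of a cubic (typically a union of three lines, or of a line and a conic, chosen according to how the remaining seven points distribute themselves on lines and conics) that contains $p_j$ for all $j\le 8$ with $j\ne i$ but misses $p_i$. That casework is where essentially all of the content of the classical proof resides, and it is absent from your write-up. With that independence lemma supplied, the dimension count $\dim W=10-8=2=\dim\langle C_1,C_2\rangle$ closes the argument exactly as you describe.
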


Together with Theorem \ref{genstrict}, this proves that $\mathfrak{L}(X) = (1,2,3,4,5,6,7,9)$. From this point of view, the content of the Cayley-Bacharach theorem is that the gap occurs between the final two terms of $\mathfrak{L}(X)$.

The correspondence in use here is that the hyperplane sections of $v_d^{(n)}(\PP^n)$ are exactly the degree $d$ hypersurfaces in $\PP^n.$ The maximum number of irreducible components such a hypersurface can have is $d$, and it is clear that the reducibility of $v_d^{(n)}(\PP^n)$ is always $d$.

In what follows we give a method to compute ${\mathfrak{R}\mathfrak{L}}^G(v_d^{(n)}(\PP^n))$ for all $n, d > 1$. Our motivation in defining the sequences of greedy reducible secant indices becomes clear in this context since the problem of computing ${\mathfrak{R}\mathfrak{L}}^G(v_d^{(n)}(\PP^n))$ can be converted into a tractable combinatorial question. We illustrate the computation first with the example $X = v_3^{(2)}(\PP^2)$.

\begin{example}
Consider two fully reducible hyperplane sections of $X\subseteq \PP^9$ whose intersection consists of exactly $9$ distinct points. These hyperplane sections are curves in $\PP^2(x:y:z)$, say $C_1 = V((x - z)(x - 2z)(x - 3z))$ and $C_2 = V((y - z)(y - 2z)(y - 3z))$.

We will compute ${\mathfrak{R}\mathfrak{L}}^G(X)$. The top term is $9$, so then to find the next term we need to pick a hyperplane in $\PP^9$ independent to the first two. The ``greediness'' of the sequence ${\mathfrak{R}\mathfrak{L}}^G(X)$ is from the fact that we form it by finding hyperplane sections which remove the fewest points from the remaining finitely many points in the intersection at each step.

The number of irreducible components in the union $C_1\cup C_2$ is $6 = \mu(X)$. To compute the next term of ${\mathfrak{R}\mathfrak{L}}^G(X)$ we must find an independent hyperplane section $C_3$ which removes the fewest amount of points from $C_1\cap C_2$, but does not violate the $\mu(X)$ condition. That is, the way to interpret the defining constraint of the hyperplane sections used in the construction of ${\mathfrak{R}\mathfrak{L}}^G(X)$ is that they are those which do not introduce any additional irreducible components that meet the points we are working with; in other words, we must form $C_3$ out of some choice of at most three distinct irreducible components from $C_1, C_2$. 

These three components cannot all be from the same $C_j$, so it is clear that the best we can do is take, for example, $C_3 = V((x - z)(x - 2z)(y - z))$. Then $C_1\cap C_2\cap C_3$ is reduced and consists of $7$ points. Note also that so long as the number of points in the intersection decreases, the latest hyperplane section cannot be a linear combination of the previous. This process is continued to compute the remaining numbers. One possible continuation is illustrated in Figure 1.

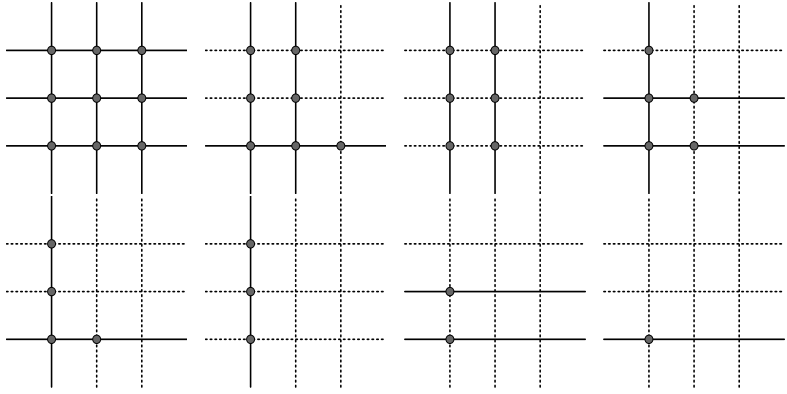
\begin{figure}
\centering

\begin{minipage}[b]{0.2\textwidth}
\resizebox{1in}{1in}{%
\definecolor{wwwwww}{rgb}{0.4,0.4,0.4}
\begin{tikzpicture}[line cap=round,line join=round,>=triangle 45,x=1cm,y=1cm]
\clip(0,0) rectangle (4,4);
\draw [line width=1pt,solid,domain=0:4] plot(\x,{(--2-0*\x)/2});
\draw [line width=1pt,solid,domain=0:4] plot(\x,{(--4-0*\x)/2});
\draw [line width=1pt,solid,domain=0:4] plot(\x,{(--6-0*\x)/2});
\draw [line width=1pt,solid] (3,0) -- (3,4);
\draw [line width=1pt,solid] (2,0) -- (2,4);
\draw [line width=1pt,solid] (1,0) -- (1,4);
\begin{scriptsize}
\draw [fill=wwwwww] (1,1) circle (2.5pt);
\draw [fill=wwwwww] (3,1) circle (2.5pt);
\draw [fill=wwwwww] (1,2) circle (2.5pt);
\draw [fill=wwwwww] (3,2) circle (2.5pt);
\draw [fill=wwwwww] (1,3) circle (2.5pt);
\draw [fill=wwwwww] (3,3) circle (2.5pt);
\draw [fill=wwwwww] (2,3) circle (2.5pt);
\draw [fill=wwwwww] (2,1) circle (2.5pt);
\draw [fill=wwwwww] (2,2) circle (2.5pt);
\end{scriptsize}
\end{tikzpicture}
}
\end{minipage}
\begin{minipage}[b]{0.2\textwidth}
\resizebox{1in}{1in}{%
\definecolor{wwwwww}{rgb}{0.4,0.4,0.4}
\begin{tikzpicture}[line cap=round,line join=round,>=triangle 45,x=1cm,y=1cm]
\clip(0,0) rectangle (4,4);
\draw [line width=1pt,solid,domain=0:4] plot(\x,{(--2-0*\x)/2});
\draw [line width=1pt,dotted,domain=0:4] plot(\x,{(--4-0*\x)/2});
\draw [line width=1pt,dotted,domain=0:4] plot(\x,{(--6-0*\x)/2});
\draw [line width=1pt,dotted] (3,0) -- (3,4);
\draw [line width=1pt,solid] (2,0) -- (2,4);
\draw [line width=1pt,solid] (1,0) -- (1,4);
\begin{scriptsize}
\draw [fill=wwwwww] (1,1) circle (2.5pt);
\draw [fill=wwwwww] (3,1) circle (2.5pt);
\draw [fill=wwwwww] (1,2) circle (2.5pt);
\draw [fill=wwwwww] (1,3) circle (2.5pt);
\draw [fill=wwwwww] (2,3) circle (2.5pt);
\draw [fill=wwwwww] (2,1) circle (2.5pt);
\draw [fill=wwwwww] (2,2) circle (2.5pt);
\end{scriptsize}
\end{tikzpicture}
}
\end{minipage}
\begin{minipage}[b]{0.2\textwidth}
\resizebox{1in}{1in}{%
\definecolor{wwwwww}{rgb}{0.4,0.4,0.4}
\begin{tikzpicture}[line cap=round,line join=round,>=triangle 45,x=1cm,y=1cm]
\clip(0,0) rectangle (4,4);
\draw [line width=1pt,dotted,domain=0:4] plot(\x,{(--2-0*\x)/2});
\draw [line width=1pt,dotted,domain=0:4] plot(\x,{(--4-0*\x)/2});
\draw [line width=1pt,dotted,domain=0:4] plot(\x,{(--6-0*\x)/2});
\draw [line width=1pt,dotted] (3,0) -- (3,4);
\draw [line width=1pt,solid] (2,0) -- (2,4);
\draw [line width=1pt,solid] (1,0) -- (1,4);
\begin{scriptsize}
\draw [fill=wwwwww] (1,1) circle (2.5pt);
\draw [fill=wwwwww] (1,2) circle (2.5pt);
\draw [fill=wwwwww] (1,3) circle (2.5pt);
\draw [fill=wwwwww] (2,3) circle (2.5pt);
\draw [fill=wwwwww] (2,1) circle (2.5pt);
\draw [fill=wwwwww] (2,2) circle (2.5pt);
\end{scriptsize}
\end{tikzpicture}
}
\end{minipage}
\begin{minipage}[b]{0.2\textwidth}
\resizebox{1in}{1in}{%
\definecolor{wwwwww}{rgb}{0.4,0.4,0.4}
\begin{tikzpicture}[line cap=round,line join=round,>=triangle 45,x=1cm,y=1cm]
\clip(0,0) rectangle (4,4);
\draw [line width=1pt,solid,domain=0:4] plot(\x,{(--2-0*\x)/2});
\draw [line width=1pt,solid,domain=0:4] plot(\x,{(--4-0*\x)/2});
\draw [line width=1pt,dotted,domain=0:4] plot(\x,{(--6-0*\x)/2});
\draw [line width=1pt,dotted] (3,0) -- (3,4);
\draw [line width=1pt,dotted] (2,0) -- (2,4);
\draw [line width=1pt,solid] (1,0) -- (1,4);
\begin{scriptsize}
\draw [fill=wwwwww] (1,1) circle (2.5pt);
\draw [fill=wwwwww] (1,2) circle (2.5pt);
\draw [fill=wwwwww] (1,3) circle (2.5pt);
\draw [fill=wwwwww] (2,1) circle (2.5pt);
\draw [fill=wwwwww] (2,2) circle (2.5pt);
\end{scriptsize}
\end{tikzpicture}
}
\end{minipage}

\begin{minipage}[b]{0.2\textwidth}
\resizebox{1in}{1in}{%
\definecolor{wwwwww}{rgb}{0.4,0.4,0.4}
\begin{tikzpicture}[line cap=round,line join=round,>=triangle 45,x=1cm,y=1cm]
\clip(0,0) rectangle (4,4);
\draw [line width=1pt,solid,domain=0:4] plot(\x,{(--2-0*\x)/2});
\draw [line width=1pt,dotted,domain=0:4] plot(\x,{(--4-0*\x)/2});
\draw [line width=1pt,dotted,domain=0:4] plot(\x,{(--6-0*\x)/2});
\draw [line width=1pt,dotted] (3,0) -- (3,4);
\draw [line width=1pt,dotted] (2,0) -- (2,4);
\draw [line width=1pt,solid] (1,0) -- (1,4);
\begin{scriptsize}
\draw [fill=wwwwww] (1,1) circle (2.5pt);
\draw [fill=wwwwww] (1,2) circle (2.5pt);
\draw [fill=wwwwww] (1,3) circle (2.5pt);
\draw [fill=wwwwww] (2,1) circle (2.5pt);
\end{scriptsize}
\end{tikzpicture}
}
\end{minipage}
\begin{minipage}[b]{0.2\textwidth}
\resizebox{1in}{1in}{%
\definecolor{wwwwww}{rgb}{0.4,0.4,0.4}
\begin{tikzpicture}[line cap=round,line join=round,>=triangle 45,x=1cm,y=1cm]
\clip(0,0) rectangle (4,4);
\draw [line width=1pt,dotted,domain=0:4] plot(\x,{(--2-0*\x)/2});
\draw [line width=1pt,dotted,domain=0:4] plot(\x,{(--4-0*\x)/2});
\draw [line width=1pt,dotted,domain=0:4] plot(\x,{(--6-0*\x)/2});
\draw [line width=1pt,dotted] (3,0) -- (3,4);
\draw [line width=1pt,dotted] (2,0) -- (2,4);
\draw [line width=1pt,solid] (1,0) -- (1,4);
\begin{scriptsize}
\draw [fill=wwwwww] (1,1) circle (2.5pt);
\draw [fill=wwwwww] (1,2) circle (2.5pt);
\draw [fill=wwwwww] (1,3) circle (2.5pt);
\end{scriptsize}
\end{tikzpicture}
}
\end{minipage}
\begin{minipage}[b]{0.2\textwidth}
\resizebox{1in}{1in}{%
\definecolor{wwwwww}{rgb}{0.4,0.4,0.4}
\begin{tikzpicture}[line cap=round,line join=round,>=triangle 45,x=1cm,y=1cm]
\clip(0,0) rectangle (4,4);
\draw [line width=1pt,solid,domain=0:4] plot(\x,{(--2-0*\x)/2});
\draw [line width=1pt,solid,domain=0:4] plot(\x,{(--4-0*\x)/2});
\draw [line width=1pt,dotted,domain=0:4] plot(\x,{(--6-0*\x)/2});
\draw [line width=1pt,dotted] (3,0) -- (3,4);
\draw [line width=1pt,dotted] (2,0) -- (2,4);
\draw [line width=1pt,dotted] (1,0) -- (1,4);
\begin{scriptsize}
\draw [fill=wwwwww] (1,1) circle (2.5pt);
\draw [fill=wwwwww] (1,2) circle (2.5pt);
\end{scriptsize}
\end{tikzpicture}
}
\end{minipage}
\begin{minipage}[b]{0.2\textwidth}
\resizebox{1in}{1in}{%
\definecolor{wwwwww}{rgb}{0.4,0.4,0.4}
\begin{tikzpicture}[line cap=round,line join=round,>=triangle 45,x=1cm,y=1cm]
\clip(0,0) rectangle (4,4);
\draw [line width=1pt,solid,domain=0:4] plot(\x,{(--2-0*\x)/2});
\draw [line width=1pt,dotted,domain=0:4] plot(\x,{(--4-0*\x)/2});
\draw [line width=1pt,dotted,domain=0:4] plot(\x,{(--6-0*\x)/2});
\draw [line width=1pt,dotted] (3,0) -- (3,4);
\draw [line width=1pt,dotted] (2,0) -- (2,4);
\draw [line width=1pt,dotted] (1,0) -- (1,4);
\begin{scriptsize}
\draw [fill=wwwwww] (1,1) circle (2.5pt);
\end{scriptsize}
\end{tikzpicture}
}
\end{minipage}
\caption{successive hyperplane sections of $v_3^{(2)}(\PP^2)$}
\end{figure}

In Figure 1 the images are ordered left to right, top to bottom. The first image is of the $9$ points in the intersection of $C_1$ and $C_2$. The curve $C_1$ consists of the union of the three vertical lines, and the curve $C_2$ of the three horizontal lines. These curves are all visualized by working within the affine chart $\PP^2\setminus V(z)$ and over the real numbers. The next image depicts $C_3$ and the $7$ points remaining after intersecting it with $C_1\cap C_2$. From there, the hyperplane sections chosen are $V((x - z)(x - 2z)z), V((x - z)(y - z)(y - 2z)), V((x - z)(y - z)z), V((x - z)z^2), V((y - z)(y - 2z)z), V((y - z)z^2),$ in that order. Note the presence of the irreducible component $V(z)$ in some of these hyperplane sections, used to ensure the curve is of the correct degree. As $V(z)$ does not meet $C_1\cap C_2$, these additional hyperplane sections satisfy the conditions of $d$-reducibility and the $\mu(X) = 6$ condition.
\end{example}

This example was too simple to illustrate some questions that need to be resolved when computing ${\mathfrak{R}\mathfrak{L}}^G(\cdot)$ in more complicated situations. One is, at a given step of the computation, if there are multiple possible new hyperplane sections to use, which each take away the same number of points, does it matter which one is used? Additionally, how can one determine whether the greedy sequence matches the sequence ${\mathfrak{R}\mathfrak{L}}(\cdot)$?

For Veronese varieties, answering these questions becomes easier if we first move the computations to a simplified combinatorial setting. The heuristic guiding the transformation is that the number of points we are getting after adding hyperplane sections is the same if we were to ``collapse'' the distinct irreducible components into one irreducible component, counted with a multiplicity. We will rigorously justify this heuristic later with Lemma \ref{reductionofmonideal}.

So in the above example, for the purposes of the computation, we would collapse $C_1$ into $V(x^3)$, $C_2$ into $V(y^3)$, and $C_3$ into $V(x^2y)$. The scheme-theoretic degree of their intersection is $$\deg(C_1\cap C_2\cap C_3) = \deg(V(x^3, y^2, x^2y)) = 7.$$

Therefore, a combinatorial problem we could consider is the following.

\begin{combprob}
\label{combprob1}
Determine the maximal dimension of $k[x_0,\ldots,x_{n-1}]/I$ where $I$ is the dehomogenization with respect to $x_n$ of a monomial ideal $(x_0^d,\ldots, x_{n-1}^d) + J$ in $k[x_0,\ldots,x_{n}]$ generated by $r$ linearly independent degree $d$ monomials, for each $r\geq n$.
\end{combprob}

\begin{example}
\label{quarticexample}
We address this new problem for $n = 2$, $d = 4$. When $r = 2$, we obtain the vector space $k[x_0,x_1]/(x_0^4, x_1^4)$, of dimension $16$. We see that the sequence of maximal dimensions is $(1, 2, 3, 4, 5, 6, 7, 8, 9, 10, 12, 13, 16)$, organized from $r = 12$ to $r = 2$. Figure 2 shows how this sequence can be obtained by sequentially adding monomials to the ideal $J$ using a staircase diagram.

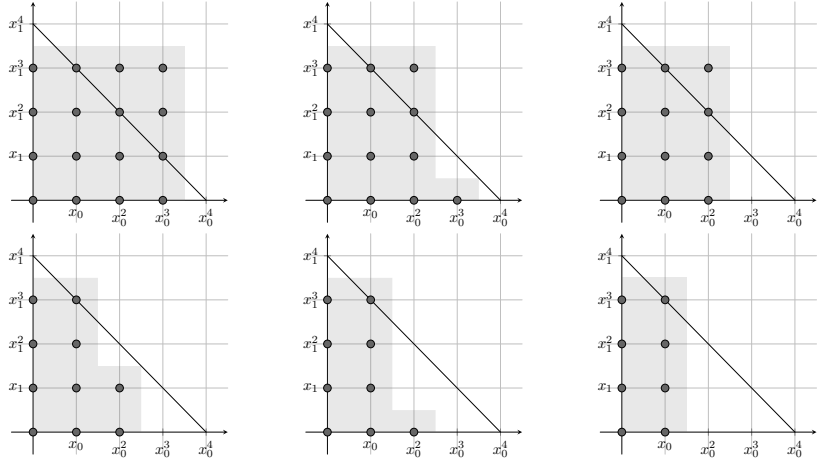
\begin{figure}
\definecolor{wwwwww}{rgb}{0.4,0.4,0.4}

\begin{minipage}[b]{0.3\textwidth}
\resizebox{1.2in}{1.2in}{%
\begin{tikzpicture}[line cap=round,line join=round,>=triangle 45,x=1cm,y=1cm]
\begin{axis}[
x=1cm,y=1cm,
axis lines=middle,
ymajorgrids=true,
xmajorgrids=true,
xmin=-0.5,
xmax=4.5,
ymin=-0.5,
ymax=4.5,
xtick={0,...,4},
ytick={0,...,4},
xticklabels={$1$, $x_0$, $x_0^2$, $x_0^3$, $x_0^4$},
yticklabels={$1$, $x_1$, $x_1^2$, $x_1^3$, $x_1^4$}]
\clip(-0.5,-0.5) rectangle (4.5,4.5);

\fill[line width=0.4pt,color=wwwwww,fill=wwwwww,fill opacity=0.15] (0,3.5) -- (3.5,3.5) -- (3.5,0) -- (0,0) -- cycle;

\draw [line width=0.5pt] (0,4)-- (4,0);
\begin{scriptsize}
\draw [fill=wwwwww] (3,3) circle (2.5pt);
\draw [fill=wwwwww] (3,2) circle (2.5pt);
\draw [fill=wwwwww] (3,1) circle (2.5pt);
\draw [fill=wwwwww] (3,0) circle (2.5pt);
\draw [fill=wwwwww] (2,1) circle (2.5pt);
\draw [fill=wwwwww] (2,2) circle (2.5pt);
\draw [fill=wwwwww] (2,3) circle (2.5pt);
\draw [fill=wwwwww] (1,3) circle (2.5pt);
\draw [fill=wwwwww] (1,2) circle (2.5pt);
\draw [fill=wwwwww] (1,1) circle (2.5pt);
\draw [fill=wwwwww] (0,0) circle (2.5pt);
\draw [fill=wwwwww] (0,1) circle (2.5pt);
\draw [fill=wwwwww] (0,2) circle (2.5pt);
\draw [fill=wwwwww] (0,3) circle (2.5pt);
\draw [fill=wwwwww] (1,0) circle (2.5pt);
\draw [fill=wwwwww] (2,0) circle (2.5pt);
\end{scriptsize}
\end{axis}
\end{tikzpicture}
}
\end{minipage}
\begin{minipage}[b]{0.3\textwidth}
\resizebox{1.2in}{1.2in}{%
\begin{tikzpicture}[line cap=round,line join=round,>=triangle 45,x=1cm,y=1cm]
\begin{axis}[
x=1cm,y=1cm,
axis lines=middle,
ymajorgrids=true,
xmajorgrids=true,
xmin=-0.5,
xmax=4.5,
ymin=-0.5,
ymax=4.5,
xtick={0,...,4},
ytick={0,...,4},
xticklabels={$1$, $x_0$, $x_0^2$, $x_0^3$, $x_0^4$},
yticklabels={$1$, $x_1$, $x_1^2$, $x_1^3$, $x_1^4$}]
\clip(-0.5,-0.5) rectangle (4.5,4.5);

\fill[line width=0.4pt,color=wwwwww,fill=wwwwww,fill opacity=0.15] (0,3.5) -- (2.5,3.5) -- (2.5,0.5) -- (3.5, 0.5) -- (3.5, 0) -- (0,0) -- cycle;

\draw [line width=0.5pt] (0,4)-- (4,0);
\begin{scriptsize}
\draw [fill=wwwwww] (3,0) circle (2.5pt);
\draw [fill=wwwwww] (2,1) circle (2.5pt);
\draw [fill=wwwwww] (2,2) circle (2.5pt);
\draw [fill=wwwwww] (2,3) circle (2.5pt);
\draw [fill=wwwwww] (1,3) circle (2.5pt);
\draw [fill=wwwwww] (1,2) circle (2.5pt);
\draw [fill=wwwwww] (1,1) circle (2.5pt);
\draw [fill=wwwwww] (0,0) circle (2.5pt);
\draw [fill=wwwwww] (0,1) circle (2.5pt);
\draw [fill=wwwwww] (0,2) circle (2.5pt);
\draw [fill=wwwwww] (0,3) circle (2.5pt);
\draw [fill=wwwwww] (1,0) circle (2.5pt);
\draw [fill=wwwwww] (2,0) circle (2.5pt);
\end{scriptsize}
\end{axis}
\end{tikzpicture}
}
\end{minipage}
\begin{minipage}[b]{0.3\textwidth}
\resizebox{1.2in}{1.2in}{%
\begin{tikzpicture}[line cap=round,line join=round,>=triangle 45,x=1cm,y=1cm]
\begin{axis}[
x=1cm,y=1cm,
axis lines=middle,
ymajorgrids=true,
xmajorgrids=true,
xmin=-0.5,
xmax=4.5,
ymin=-0.5,
ymax=4.5,
xtick={0,...,4},
ytick={0,...,4},
xticklabels={$1$, $x_0$, $x_0^2$, $x_0^3$, $x_0^4$},
yticklabels={$1$, $x_1$, $x_1^2$, $x_1^3$, $x_1^4$}]
\clip(-0.5,-0.5) rectangle (4.5,4.5);

\fill[line width=0.4pt,color=wwwwww,fill=wwwwww,fill opacity=0.15] (0,3.5) -- (2.5,3.5) -- (2.5,0) -- (0,0) -- cycle;

\draw [line width=0.5pt] (0,4)-- (4,0);
\begin{scriptsize}
\draw [fill=wwwwww] (2,1) circle (2.5pt);
\draw [fill=wwwwww] (2,2) circle (2.5pt);
\draw [fill=wwwwww] (2,3) circle (2.5pt);
\draw [fill=wwwwww] (1,3) circle (2.5pt);
\draw [fill=wwwwww] (1,2) circle (2.5pt);
\draw [fill=wwwwww] (1,1) circle (2.5pt);
\draw [fill=wwwwww] (0,0) circle (2.5pt);
\draw [fill=wwwwww] (0,1) circle (2.5pt);
\draw [fill=wwwwww] (0,2) circle (2.5pt);
\draw [fill=wwwwww] (0,3) circle (2.5pt);
\draw [fill=wwwwww] (1,0) circle (2.5pt);
\draw [fill=wwwwww] (2,0) circle (2.5pt);
\end{scriptsize}
\end{axis}
\end{tikzpicture}
}
\end{minipage}

\begin{minipage}[b]{0.3\textwidth}
\resizebox{1.2in}{1.2in}{%
\begin{tikzpicture}[line cap=round,line join=round,>=triangle 45,x=1cm,y=1cm]
\begin{axis}[
x=1cm,y=1cm,
axis lines=middle,
ymajorgrids=true,
xmajorgrids=true,
xmin=-0.5,
xmax=4.5,
ymin=-0.5,
ymax=4.5,
xtick={0,...,4},
ytick={0,...,4},
xticklabels={$1$, $x_0$, $x_0^2$, $x_0^3$, $x_0^4$},
yticklabels={$1$, $x_1$, $x_1^2$, $x_1^3$, $x_1^4$}]
\clip(-0.5,-0.5) rectangle (4.5,4.5);

\fill[line width=0.4pt,color=wwwwww,fill=wwwwww,fill opacity=0.15] (0, 3.5) -- (1.5, 3.5) -- (1.5, 1.5) -- (2.5, 1.5) -- (2.5, 0) -- (0,0) -- cycle;

\draw [line width=0.5pt] (0,4)-- (4,0);
\begin{scriptsize}
\draw [fill=wwwwww] (2,1) circle (2.5pt);
\draw [fill=wwwwww] (1,3) circle (2.5pt);
\draw [fill=wwwwww] (1,2) circle (2.5pt);
\draw [fill=wwwwww] (1,1) circle (2.5pt);
\draw [fill=wwwwww] (0,0) circle (2.5pt);
\draw [fill=wwwwww] (0,1) circle (2.5pt);
\draw [fill=wwwwww] (0,2) circle (2.5pt);
\draw [fill=wwwwww] (0,3) circle (2.5pt);
\draw [fill=wwwwww] (1,0) circle (2.5pt);
\draw [fill=wwwwww] (2,0) circle (2.5pt);
\end{scriptsize}
\end{axis}
\end{tikzpicture}
}
\end{minipage}
\begin{minipage}[b]{0.3\textwidth}
\resizebox{1.2in}{1.2in}{%
\begin{tikzpicture}[line cap=round,line join=round,>=triangle 45,x=1cm,y=1cm]
\begin{axis}[
x=1cm,y=1cm,
axis lines=middle,
ymajorgrids=true,
xmajorgrids=true,
xmin=-0.5,
xmax=4.5,
ymin=-0.5,
ymax=4.5,
xtick={0,...,4},
ytick={0,...,4},
xticklabels={$1$, $x_0$, $x_0^2$, $x_0^3$, $x_0^4$},
yticklabels={$1$, $x_1$, $x_1^2$, $x_1^3$, $x_1^4$}]
\clip(-0.5,-0.5) rectangle (4.5,4.5);

\fill[line width=0.4pt,color=wwwwww,fill=wwwwww,fill opacity=0.15] (0, 3.5) -- (1.5, 3.5) -- (1.5, 0.5) -- (2.5, 0.5) -- (2.5, 0) -- (0,0) -- cycle;

\draw [line width=0.5pt] (0,4)-- (4,0);
\begin{scriptsize}
\draw [fill=wwwwww] (1,3) circle (2.5pt);
\draw [fill=wwwwww] (1,2) circle (2.5pt);
\draw [fill=wwwwww] (1,1) circle (2.5pt);
\draw [fill=wwwwww] (0,0) circle (2.5pt);
\draw [fill=wwwwww] (0,1) circle (2.5pt);
\draw [fill=wwwwww] (0,2) circle (2.5pt);
\draw [fill=wwwwww] (0,3) circle (2.5pt);
\draw [fill=wwwwww] (1,0) circle (2.5pt);
\draw [fill=wwwwww] (2,0) circle (2.5pt);
\end{scriptsize}
\end{axis}
\end{tikzpicture}
}
\end{minipage}
\begin{minipage}[b]{0.3\textwidth}
\resizebox{1.2in}{1.2in}{%
\begin{tikzpicture}[line cap=round,line join=round,>=triangle 45,x=1cm,y=1cm]
\begin{axis}[
x=1cm,y=1cm,
axis lines=middle,
ymajorgrids=true,
xmajorgrids=true,
xmin=-0.5,
xmax=4.5,
ymin=-0.5,
ymax=4.5,
xtick={0,...,4},
ytick={0,...,4},
xticklabels={$1$, $x_0$, $x_0^2$, $x_0^3$, $x_0^4$},
yticklabels={$1$, $x_1$, $x_1^2$, $x_1^3$, $x_1^4$}]
\clip(-0.5,-0.5) rectangle (4.5,4.5);

\fill[line width=0.4pt,color=wwwwww,fill=wwwwww,fill opacity=0.15] (0, 3.5) -- (1.5, 3.5) -- (1.5, 0) -- (0,0) -- cycle;

\draw [line width=0.5pt] (0,4)-- (4,0);
\begin{scriptsize}
\draw [fill=wwwwww] (1,3) circle (2.5pt);
\draw [fill=wwwwww] (1,2) circle (2.5pt);
\draw [fill=wwwwww] (1,1) circle (2.5pt);
\draw [fill=wwwwww] (0,0) circle (2.5pt);
\draw [fill=wwwwww] (0,1) circle (2.5pt);
\draw [fill=wwwwww] (0,2) circle (2.5pt);
\draw [fill=wwwwww] (0,3) circle (2.5pt);
\draw [fill=wwwwww] (1,0) circle (2.5pt);
\end{scriptsize}
\end{axis}
\end{tikzpicture}
}
\end{minipage}
\caption{successive monomial additions to $J$ for $n = 2$, $d = 4$}
\end{figure}

This figure is read from left to right, top to bottom like Figure 1. The first image is that of the $16$ monomials representing the basis elements of $k[x_0,x_1]/(x_0^4,x_1^4)$. These monomials correspond to points in the diagram in accordance to their exponent vectors. Adding a monomial to $J$ amounts to removing that monomial and all others it divides from the diagram; the number of monomials leftover is the dimension of the new polynomial ring quotient. We may only add monomials to $J$ that are on or below the pictured diagonal line, as these are exactly those of degree $\leq 4$. This setup leads to a ``game'' wherein one tries to remove as few monomials as possible from the diagram with each successive addition to $J$. The pictured sequence is part of the sequence of additions $x_0^3 x_1, x_0^3, x_0^2x_1^2, x_0^2x_1, x_0^2, x_0x_1^3, x_0x_1^2, x_0x_1, x_0, x_1^3, x_1^2, x_1$. The additions past $x_0^2$ are not pictured as there are no more gaps larger than $1$ that appear in the sequences of dimensions. Note that this sequence of monomials is part of the lexicographic sequence of degree $4$ monomials in $x_0,x_1,x_2$, but dehomogenized with respect to $x_2$. A priori, this lexicographic sequence only gives a ``greedy'' sequence of removals, removing as few monomials from the diagram at each step as possible. This example is simple enough to check by brute force that the greedy sequence is in fact the term-wise maximal sequence. Showing that this is true for the more complicated examples when $n$ or $d$ is larger requires a more efficient argument.
\end{example}

We will show that this combinatorial problem is equivalent to computing both ${\mathfrak{R}\mathfrak{L}}^G(v_d^{(n)}(\PP^n))$, $\mathfrak{R}\mathfrak{L}(v_d^{(n)}(\PP^n))$. In particular, in this example, we computed ${\mathfrak{R}\mathfrak{L}}^G(v_4^{(2)}(\PP^2))$.

As a brief aside, note the following generalization of the Cayley-Bacharach theorem \cite{eisenbud_green_harris1}.

\begin{theorem}
Let $\Gamma\subseteq \PP^r$ be a complete intersection of hypersurfaces $X_1,\ldots, X_r$ of degrees $d_1,\ldots,d_r$, and let $\Gamma^\prime, \Gamma^{\prime\prime}\subseteq \Gamma$ be closed subschemes residual to one another. Set $$m = -r - 1 + \sum_{d_i}.$$ Then for any $\ell\geq 0$, we have $$h^0(\PP^r, \mathcal{I}_{\Gamma^\prime}(\ell)) - h^0(\PP^r, \mathcal{I}_\Gamma(\ell)) = h^1(\PP^r, \mathcal{I}_{\Gamma^{\prime\prime}}(m - \ell)).$$
\end{theorem}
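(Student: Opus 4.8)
The plan is to recast the statement as an instance of liaison (linkage) duality for the arithmetically Gorenstein scheme $\Gamma$, and to prove it by combining the exact sequence comparing $\Gamma'$ with $\Gamma$, graded local duality, and the identification of canonical modules coming from residuality. Write $S = k[x_0,\dots,x_r]$ with irrelevant ideal $\mathfrak{m}$, let $I \subseteq I', I''$ denote the saturated homogeneous ideals of $\Gamma, \Gamma', \Gamma''$, and set $R = S/I$. Since $\Gamma$ is a complete intersection of $r$ forms of degrees $d_1,\dots,d_r$ in $\PP^r$, the ring $R$ is a one-dimensional graded Cohen--Macaulay, in fact Gorenstein, ring, and the Koszul resolution together with the adjunction formula identifies its canonical module as $\omega_R \cong R(m)$ with $m = -r-1+\sum d_i$. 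The hypothesis that $\Gamma', \Gamma''$ are residual to one another means precisely that the images $J' = I'/I$ and $J'' = I''/I$ in $R$ satisfy $J' = (0:_R J'')$ and $J'' = (0:_R J')$.

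First I would reduce the left-hand side to a single graded dimension. Because $I$ and $I'$ are saturated, $h^0(\mathcal{I}_{\Gamma'}(\ell)) = \dim_k (I')_\ell$ and $h^0(\mathcal{I}_\Gamma(\ell)) = \dim_k (I)_\ell$, so from the exact sequence $0 \to I \to I' \to J' \to 0$ of graded $S$-modules we obtain
$$h^0(\mathcal{I}_{\Gamma'}(\ell)) - h^0(\mathcal{I}_\Gamma(\ell)) = \dim_k (J')_\ell.$$
Thus it suffices to prove $h^1(\mathcal{I}_{\Gamma''}(m-\ell)) = \dim_k (J')_\ell$.

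Next I would rewrite the right-hand side through local and linkage duality. Writing $A'' = S/I'' = R/J''$, the standard comparison between sheaf and local cohomology for the zero-dimensional scheme $\Gamma''$ gives $h^1(\mathcal{I}_{\Gamma''}(m-\ell)) = \dim_k H^1_{\mathfrak m}(A'')_{m-\ell}$, since the saturatedness of $I''$ makes $(A'')_j$ the image of $S_j$ in $H^0(\mathcal{O}_{\Gamma''}(j))$. Graded local duality over $S$ then identifies $H^1_{\mathfrak m}(A'')_{m-\ell}$ with the $k$-dual of $(\omega_{A''})_{\ell-m}$, where $\omega_{A''} = \operatorname{Ext}^{r}_S(A'', S(-r-1))$ is the canonical module of $A''$. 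Because $A''$ is a quotient of the Gorenstein ring $R$ by the ideal $J''$ and is Cohen--Macaulay of the same dimension one (its saturated coordinate ring has depth one), the canonical module is computed as $\omega_{A''} \cong \operatorname{Hom}_R(A'', \omega_R) \cong (0:_R J'')(m)$. Residuality now gives $(0:_R J'') = J'$, whence $\omega_{A''} \cong J'(m)$ and therefore $(\omega_{A''})_{\ell-m} \cong (J')_\ell$. Combining the two reductions yields the claimed equality.

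The main obstacle I anticipate is the careful bookkeeping in the linkage step: verifying that $A''$ is unmixed and Cohen--Macaulay of dimension one so that the identity $\omega_{A''} = \operatorname{Hom}_R(A'', \omega_R)$ is legitimate, and tracking the degree shifts through local duality and the twist $\omega_R \cong R(m)$ so that the grading lands on $(J')_\ell$ rather than on a shifted piece. Everything else---the Gorenstein property of a complete intersection, the sheaf-to-local-cohomology comparison, and the annihilator reformulation of residuality---is standard; the content is simply organizing these so that the two sides meet at $\dim_k (J')_\ell$.
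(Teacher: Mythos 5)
The paper does not actually prove this statement: it is quoted verbatim from Eisenbud--Green--Harris as a known generalization of the Cayley--Bacharach theorem, with no argument supplied. So there is nothing in the paper to compare against line by line; what you have written is a self-contained proof, and it is correct. Your route --- identify $\omega_R\cong R(m)$ from the Koszul resolution, reduce the left side to $\dim_k(J')_\ell$ via saturatedness, convert $h^1(\mathcal{I}_{\Gamma''}(m-\ell))$ to $H^1_{\mathfrak m}(A'')_{m-\ell}$, apply graded local duality, and use $\operatorname{Hom}_R(R/J'',\omega_R)\cong(0:_RJ'')(m)=J'(m)$ --- is essentially the standard linkage-duality proof that appears in the Eisenbud--Green--Harris sources the paper cites, so you have independently reconstructed the intended argument rather than found a new one. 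Two small points of bookkeeping you should make explicit: the identification $h^1(\mathcal{I}_{\Gamma''}(j))=\dim_kH^1_{\mathfrak m}(A'')_j$ uses $H^1(\mathcal{O}_{\PP^r}(j))=0$, which requires $r\geq 2$ (the statement genuinely fails for $r=1$ and large $\ell$, so this hypothesis is implicit in the theorem itself); and of the two annihilator identities $J'=(0:_RJ'')$, $J''=(0:_RJ')$, only one is the definition of residuality --- the other is the double-annihilator property, which holds here precisely because $\Gamma$ is arithmetically Gorenstein, and is worth a sentence. Neither issue is a gap in substance.
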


In principle, this equality could potentially be used as a way of detecting gaps in the sequences $\mathfrak{L}(v_d^{(n)}(\PP^n))$. However, as $n$ and $d$ grow, enumerating the possibilities for the dimensions of the involved cohomology groups appears to become difficult, and so using this result in that manner does not seem feasible.

We now show that the problem of computing ${\mathfrak{R}\mathfrak{L}}^G(v_d^{(n)}(\PP^n)),$ $\mathfrak{R}\mathfrak{L}(v_d^{(n)}(\PP^n))$ is equivalent to computing the maximal dimensions of the polynomial ring quotients above. First we show that any dimension obtained there can be realized as the number of points left in the intersection of hypersurfaces in $\PP^n$ that are each the union of $d$ hyperplanes.

We know that the ideal $I = (\prod_{i = 1}^d (x_0 - ix_n), \ldots, \prod_{i = 1}^d (x_{n-1} - ix_n))$ defines a complete intersection in $\PP^n$. The support of this intersection is the collection of $d^n$ distinct points $\{(a_0 :\ldots : a_{n-1} : 1)\mid a_i \in \{1,\ldots, d\}\}$, and so the complete intersection must be reduced. Adding any equation to the set of generators of the ideal $I$ gives an ideal which cuts out a reduced subscheme of $\PP^n$.

Next, observe there is a divisibility-preserving bijection $$\phi_n: \Omega_n \rightarrow \Omega_n^R$$ between $$\Omega_n^R := \left\{\prod_{i = 0}^{n-1} \prod_{j = 1}^{d_i}(x_i - j)\mid (d_0,\ldots,d_{n-1})\in \ZZ^n_{\geq 0}\right\}$$ and the set $\Omega_n$ of monomials in $k[x_0,\ldots,x_{n-1}]$, defined by $$\prod_{i = 0}^{n-1} x_i^{d_i}\mapsto \prod_{i = 0}^{n-1} \prod_{j = 1}^{d_i}(x_i - j).$$ That is, for $a, b\in \Omega_n$, $a$ divides $b$ if and only if $\phi_n(a)$ divides $\phi_n(b)$. With this notation, we have the following key fact.

\begin{lemma}
\label{reductionofmonideal}
Let $I = (x_0^d,\ldots, x_{n-1}^d, m_1,\ldots,m_r)$ be a monomial ideal such that $\deg(m_j)\leq d$ for each $j$. Then \begin{align*}\dim_k k[x_0,&\ldots,x_{n-1}]/I \\&= \dim_k k[x_0,\ldots,x_{n-1}]/(\phi_n(x_0^d),\ldots,\phi_n(x_{n-1}^d), \phi_n(m_1),\ldots,\phi_n(m_r)),\end{align*} as finite-dimensional vector spaces.
\end{lemma}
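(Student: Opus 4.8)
We want to show that passing from the "collapsed" monomial ideal $I=(x_0^d,\dots,x_{n-1}^d,m_1,\dots,m_r)$ to the ideal generated by the images $\phi_n(x_i^d)=\prod_{j=1}^d(x_i-j)$ and $\phi_n(m_k)$ under the divisibility-preserving bijection does not change the dimension of the quotient ring as a $k$-vector space. The map $\phi_n$ sends a monomial $\prod x_i^{d_i}$ to the product $\prod_i\prod_{j=1}^{d_i}(x_i-j)$.

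**The approach: a filtration / change-of-basis argument.** The key observation is that $\phi_n$ is a triangular substitution. The plan is to exhibit a $k$-linear isomorphism of the ambient polynomial ring that carries the leading-term structure of one ideal to the other. Concretely, I would set up the standard grading of $R=k[x_0,\dots,x_{n-1}]$ by total degree and note that each generator $\phi_n(m)=\prod_i\prod_{j=1}^{d_i}(x_i-j)$ has the monomial $m=\prod_i x_i^{d_i}$ as its unique top-degree part. Thus $\phi_n(m)=m+(\text{lower-degree terms})$. Let me think about two natural routes. First route: show that $\{\phi_n(m):m\in\Omega_n\}$ is a $k$-vector-space basis of $R$ adapted to divisibility, so that the span of the ideal generated by the $\phi_n$-images, truncated in each degree, matches dimension-by-dimension the span of the monomial ideal. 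Second route, which I expect to be cleaner: induct on the number of variables using the bijection's multiplicativity.

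**Key steps in order.** I would first record that the set $\{\phi_n(m)\}_{m\in\Omega_n}$ is a $k$-basis of $R$, since the change-of-basis matrix from $\{m\}$ to $\{\phi_n(m)\}$ is unitriangular with respect to the partial order by total degree (and can be refined to a total monomial order); this is immediate because $\phi_n(m)=m+\text{lower}$. Next, I would show that the subspace $V:=\mathrm{span}_k\{\phi_n(m):m\in(x_0^d,\dots,x_{n-1}^d,m_1,\dots,m_r)\}$, i.e.\ the span of $\phi_n$ applied to every monomial in the monomial ideal $I$, is exactly the ideal $J:=(\phi_n(x_0^d),\dots,\phi_n(x_{n-1}^d),\phi_n(m_1),\dots,\phi_n(m_r))$. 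The containment $J\subseteq V$ is the substantive direction: I must check that multiplying a generator $\phi_n(\prod x_i^{e_i})$ by an arbitrary polynomial lands in $V$, and conversely that $\phi_n$ of any multiple of a monomial generator is a $k$-combination of the $\phi_n(m)$ for $m\in I$. Here the divisibility-preservation of $\phi_n$ is exactly what guarantees that $\phi_n(m\cdot m')$ relates to $\phi_n(m)$ correctly; the slightly delicate point is that $\phi_n$ is \emph{not} a ring homomorphism, so $\phi_n(m m')\neq \phi_n(m)\phi_n(m')$ in general, and one works instead with the factor-wise telescoping identity $\prod_{j=1}^{a+b}(x-j)=\prod_{j=1}^{a}(x-j)\cdot\prod_{j=1}^{b}(x-(a+j))$ together with the fact that each $x-c$ is a unit translate of $x-j$. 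Once $V=J$ is established, the basis property gives
$$
\dim_k R/I=\#(\Omega_n\setminus I)=\dim_k R/J,
$$
since $\{\phi_n(m):m\notin I\}$ descends to a basis of $R/J$.

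**Expected main obstacle and an alternative.** The main obstacle is precisely that $\phi_n$ fails to be multiplicative, so one cannot simply say "$\phi_n$ is a ring isomorphism." The cleanest workaround I anticipate is to avoid $\phi_n$ as a map on products entirely and instead use the coordinate translation/scaling automorphisms: since $\prod_{j=1}^{d}(x_i-j)$ is a degree-$d$ polynomial with $d$ distinct roots, the quotient $R/J$ is, by the Chinese Remainder Theorem applied variable-by-variable, isomorphic to the coordinate ring of the reduced complete-intersection points, whose dimension counts lattice points in the same staircase region as $R/I$. In other words, I would argue that both quotients have the same initial ideal (equivalently the same standard monomial staircase) with respect to the degree lexicographic order: the leading term of each $\phi_n(m_k)$ is $m_k$, so $\mathrm{in}(J)=I$ provided the generators of $J$ form a Gröbner basis, and this is forced because the $S$-polynomials reduce to zero for the same combinatorial reason the original monomial generators trivially do. Since a quotient and its initial-ideal quotient share the same $k$-dimension, this immediately yields the claim. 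I expect the Gröbner/initial-ideal phrasing to be the most efficient, with the telescoping root-shift identity as the technical lemma doing the real work; the strictly-increasing-degree filtration is what makes all the leading-term bookkeeping go through.
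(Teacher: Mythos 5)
Your proposal is correct, and your primary route is genuinely different from the paper's. The paper argues geometrically: it uses the fact (established just before the lemma) that $V(\phi_n(x_0^d),\ldots,\phi_n(x_{n-1}^d))$ is the \emph{reduced} grid of $d^n$ points $\{1,\ldots,d\}^n\subseteq\AS^n$, observes that imposing $\phi_n(\prod x_i^{d_i})$ deletes exactly the points with $a_i>d_i$ for all $i$, and matches the surviving points bijectively with the standard monomials of $k[x_0,\ldots,x_{n-1}]/I$; the dimension count is then the degree of a reduced zero-dimensional scheme. Your main argument is instead linear-algebraic: $\{\phi_n(m)\}_{m\in\Omega_n}$ is a degree-unitriangular basis of the polynomial ring, and the ideal $J$ generated by the $\phi_n$-images coincides with the span of $\{\phi_n(m): m\in I\cap\Omega_n\}$ --- one inclusion from divisibility-preservation, the other from the shifted-factorial identity $\prod_{j=1}^{a+b}(x-j)=\prod_{j=1}^{a}(x-j)\cdot\prod_{j=a+1}^{a+b}(x-j)$, which you correctly flag as the point that repairs the non-multiplicativity of $\phi_n$. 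This buys something the paper's proof does not: you never need reducedness of the scheme, you get equality of the full degree-filtered Hilbert functions rather than just total dimensions, and $\operatorname{in}_\preceq(J)=I$ falls out as a corollary. Two caveats: your ``alternative'' via the Chinese Remainder Theorem is not really an alternative --- it is essentially the paper's proof in different clothing; and the closing Gr\"obner-basis shortcut is under-justified as a standalone argument, since the $S$-polynomials of the $\phi_n(m_k)$ are \emph{not} zero (unlike those of monomials) and their reduction to zero is exactly what must be proved --- it is a consequence of your span computation, not a substitute for it.
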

\begin{proof}
The dimension $$\dim_k k[x_0,\ldots,x_{n-1}]/(\phi_n(x_0^d),\ldots,\phi_n(x_{n-1}^d),\phi_n(m_1),\ldots,\phi_n(m_r))$$ equals the degree of the reduced scheme $V(\phi_n(x_0^d),\ldots,\phi_n(x_{n-1}^d), \phi_n(m_1),\ldots,\phi_n(m_r))$.

We know that the support of the reduced scheme $Y := V(\phi_n(x_0^d),\ldots,\phi_n(x_{n-1}^d))\subseteq \AS^n$ is the collection of $d^n$ distinct points $$\{(a_0,\ldots, a_{n-1})\mid a_i \in \{1,\ldots, d\}\}\subseteq \AS^n$$ as discussed above. The effect of intersecting $Y$ with a hypersurface of the form $V(\prod_{i = 0}^{n-1} \prod_{j = 1}^{d_i}(x_i - j))$ is to remove all points $(a_0,\ldots, a_{n-1})$ of $Y$ that satisfy $a_i > d_i$ for each $i$.

On the other hand, each point $(a_0,\ldots, a_{n-1})$  corresponds to a monomial $\prod_{i = 0}^{n-1}x_i^{a_i}$. The quotient ring $k[x_0,\ldots,x_{n-1}]/(x_0^d,\ldots,x_{n-1}^d)$ has as a basis the monomials corresponding to the points of $Y$. The effect of intersecting $Y$ with the hypersurface $V(\prod_{i = 0}^{n-1} \prod_{j = 1}^{d_i}(x_i - j))$ is analogous to forming the quotient ring $$k[x_0,\ldots,x_{n-1}]/(x_0^d,\ldots,x_{n-1}^d, \prod_{i = 0}^{n-1}x_i^{d_i});$$ the number of points left in $Y$ is equal to the dimension of this new quotient ring.

By repeating this process $r$ times, we obtain the desired result.
\end{proof}

Note the similarity between the patterns of Figures 1 and 2, after accounting for the difference in dimension. Making rigorous this similarity is all that Lemma \ref{reductionofmonideal} is serving to do. The quotient ring problem leads to a clean proof that $\mu(v_d^{(n)}(\PP^n)) = nd$.

\begin{lemma}
$\mu(v_d^{(n)}(\PP^n)) = nd$.
\end{lemma}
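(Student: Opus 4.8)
The claim is $\mu(v_d^{(n)}(\PP^n)) = nd$. By the remark immediately preceding the definition of $\mu$, we already have the lower bound $\mu(X) \geq p\dim(X)$, and for $X = v_d^{(n)}(\PP^n)$ the reducibility is $p = d$ and $\dim(X) = n$, so $\mu(X) \geq nd$ automatically. Thus the entire content of the lemma is the reverse inequality: I must exhibit a single sequence of hyperplanes $(H_1,\ldots,H_N)$ (where $N = N_d^{(n)} - 1$) satisfying the conditions of $d$-reducibility, such that the number of irreducible components of $\bigcup_j (H_j \cap X)$ meeting $H_1 \cap \cdots \cap H_n \cap X$ equals exactly $nd$.

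\textbf{Construction of the witnessing sequence.} The plan is to translate hyperplane sections of $v_d^{(n)}(\PP^n)$ into degree-$d$ hypersurfaces in $\PP^n$, as the paper sets up throughout this section. I would take the first $n$ hyperplane sections to be the fully reducible hypersurfaces cutting out the reduced complete intersection $I = \left(\prod_{i=1}^d(x_0 - ix_n),\ldots,\prod_{i=1}^d(x_{n-1}-ix_n)\right)$ discussed just above; each is a union of $d$ distinct hyperplanes, contributing $nd$ irreducible components in total, all meeting the zero-dimensional intersection $H_1 \cap \cdots \cap H_n \cap X$, which consists of the $d^n$ reduced points $\{(a_0:\cdots:a_{n-1}:1) \mid a_i \in \{1,\ldots,d\}\}$. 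The key point is then that every \emph{subsequent} hyperplane section $H_j$ for $j > n$ can be chosen as a degree-$d$ hypersurface built only from irreducible components already appearing among the first $n$ sections, together with copies of the extra hyperplane $V(x_n)$ used as ``padding'' to reach degree $d$. Since $V(x_n)$ is disjoint from the affine points above, these padding components do not meet $H_1 \cap \cdots \cap H_n \cap X$ and so do not increase the count of relevant components.

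\textbf{Why no new relevant components are forced.} The crux is to verify that one can decrease the point count strictly at each step (the strict-inequality requirement in $d$-reducibility) using only already-present hyperplane components. This is exactly what Lemma~\ref{reductionofmonideal} and the divisibility-preserving bijection $\phi_n$ deliver: removing points from the point configuration corresponds to quotienting by monomials $\prod_i x_i^{d_i}$ of degree $\leq d$, and each such monomial pulls back under $\phi_n$ to a product of linear forms $\prod_i \prod_{j=1}^{d_i}(x_i - j)$ drawn entirely from the components of the first $n$ sections; padding with $V(x_n)$ to degree exactly $d$ keeps each a legitimate hyperplane section without adding components that meet the affine points. Because the monomial-removal game strictly decreases the dimension at each admissible step, the corresponding point count strictly decreases, so the chain $|H_1\cap\cdots\cap H_j \cap X|$ is strictly decreasing for $j = n,\ldots,N-1$ as required. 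The total number of irreducible components meeting the affine locus therefore never exceeds the initial $nd$, giving $\mu(v_d^{(n)}(\PP^n)) \leq nd$.

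\textbf{Main obstacle.} The routine part is the bookkeeping translating between hyperplanes, hypersurfaces, and monomials; the delicate part is confirming that the full sequence of hyperplanes $(H_1,\ldots,H_N)$ genuinely satisfies \emph{all} conditions of $d$-reducibility simultaneously — in particular that the common intersection of all $N$ hyperplanes can be arranged to be a single point, and that each $H_j\cap X$ remains reduced of the correct degree $d$ with exactly $d$ components while the strict-decrease condition holds down to a single point. I would handle this by carrying the monomial staircase all the way down (as the end of Example~\ref{quarticexample} indicates, the lexicographic degree-$d$ sequence dehomogenized with respect to $x_n$ supplies such a full chain), and checking reducedness via the observation already recorded that adjoining any equation to $I$ yields a reduced subscheme.
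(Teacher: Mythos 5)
Your proposal is correct and follows essentially the same route as the paper's own proof: both reduce to the monomial game via Lemma~\ref{reductionofmonideal}, take the first $n$ sections to be the fully split hypersurfaces $V(\prod_{i=1}^d(x_{j-1}-ix_n))$, build all later sections from already-present components padded by the extraneous $V(x_n)$, and obtain the strict decrease from a non-divisibility ordering of the degree $\leq d$ monomials. Your explicit treatment of the lower bound $\mu(X)\geq nd$ and of the reducedness check is a slight elaboration of what the paper leaves implicit, but it is not a different argument.
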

\begin{proof}
In the polynomial ring $k[x_0,\ldots,x_{n-1}]$ there are $N_d^{(n)}$ monomials of degree $\leq d$. 

The quotient ring $k[x_0,\ldots,x_{n-1}]/(x_0^d,\ldots, x_{n-1}^d)$ has dimension $d^n$ as a $k$-vector space, and has a basis that includes the elements represented by the monomials from $k[x_0,\ldots,x_{n-1}]$ of degree $\leq d$ other than the pure powers $x_0^d,\ldots, x_{n-1}^d$. It suffices to form a sequence $(a_j)_j$ of length $N_d^{(n)} - n - 1$ of those monomials so that $a_j$ does not divide $a_i$ for every $i>j$. This condition ensures that \begin{align*}\dim&_k(k[x_0,\ldots,x_{n-1}]/((x_0^d,\ldots, x_{n-1}^d) + (a_1,\ldots,a_j))) \\> &\dim_k(k[x_0,\ldots,x_{n-1}]/((x_0^d,\ldots, x_{n-1}^d) + (a_1,\ldots,a_{j+1})))\end{align*} for each $j$.

By Lemma \ref{reductionofmonideal}, this sequence corresponds to a sequence of hyperplanes $(H_1,\ldots,H_n)$ of $\PP^{N_d^{(n)} - 1}$ so that $$H_j \cap v_d^{(n)}(\PP^n) = V(\prod_{i = 1}^d(x_{j - 1} - ix_n))$$ for each $j = 1,\ldots, n$, and every other $H_j \cap v_d^{(n)}(\PP^n)$ for $j > n$ is formed as the union of irreducible components of the preceding hyperplane sections, possibly along with the extraneous component $V(x_n)$ which does not meet $H_1\cap\ldots\cap H_n\cap v_d^{(n)}(\PP^n)$. Thus the union of these hyperplane sections has exactly $nd$ irreducible components. Further, these hyperplanes all satisfy the requirements of $d$-reducibility.
\end{proof}

To compute ${\mathfrak{R}\mathfrak{L}}^G(v_d^{(n)}(\PP^n))$, $\mathfrak{R}\mathfrak{L}(v_d^{(n)}(\PP^n))$ we note that by symmetry, it suffices to consider only the sequences $(H_1,\ldots, H_{N_d^{(n)} - 1})\in\mathfrak{H}(X)$ with $$H_j\cap X = V(\prod_{i = 1}^d (x_{j-1} - ix_n))\subseteq \PP^n,$$ for $j = 1,\ldots, n$. For any $j\geq n$, we have that $$|X\cap (H_1\cap \ldots\cap H_j)| = \deg(X\cap (H_1\cap \ldots\cap H_j)) = \dim_k(k[x_0,\ldots,x_{n-1}]/I)$$ where $I$ is the homogeneous ideal of $X\cap (H_1\cap \ldots\cap H_j)$ dehomogenized with respect to $x_n$. Therefore, by Lemma \ref{reductionofmonideal}, we achieve our objective:

\begin{lemma} We have
\begin{enumerate}
\item \begin{align*}{\mathfrak{R}\mathfrak{L}}&^G(v_d^{(n)}(\PP^n)) \\&= \max\{(\dim_k(k[x_0,\ldots,x_{n-1}]/(x_0^d,\ldots, x_{n-1}^d, a_1,\ldots, a_{N_d^{(n)} - n - 1})), \\&\ldots, \dim_k (k[x_0,\ldots,x_{n-1}]/(x_0^d,\ldots, x_{n-1}^d)))\},\end{align*}
\item \begin{align*}{\mathfrak{R}\mathfrak{L}}&_{N_d^{(n)} - n - 1 - j}(v_d^{(n)}(\PP^n)) \\&= \max\{\dim_k(k[x_0,\ldots,x_{n-1}]/(x_0^d,\ldots, x_{n-1}^d, a_1,\ldots, a_j))\},\end{align*}
\end{enumerate}
where both maximums are taken over all monomials $a_k$ of degree $\leq d$ and so that $x_0^d,\ldots, x_{n-1}^d$, $a_1,\ldots, a_j$ are all linearly independent, and the first maximum is taken using the total order $\leq$ on integer sequences.
\end{lemma}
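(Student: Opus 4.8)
The plan is to establish both equalities as essentially formal consequences of the work already done, with the content concentrated in the combinatorial translation furnished by Lemma \ref{reductionofmonideal}. The overall strategy is to unwind each of the defining maxima for ${\mathfrak{R}\mathfrak{L}}^G(v_d^{(n)}(\PP^n))$ and ${\mathfrak{R}\mathfrak{L}}_j(v_d^{(n)}(\PP^n))$ and show that, after the symmetry reduction just described, the hyperplane sequences in $\mathfrak{H}(X)$ being maximized over are in bijection with the admissible sequences of monomials being maximized over on the right-hand side, and that corresponding terms agree as integers. First I would fix the symmetry reduction: as noted just before the statement, the $\GL$-action on $\PP^n$ (and the $S_n$-action permuting coordinates) lets us assume that the first $n$ hyperplane sections are exactly $H_j\cap X = V(\prod_{i=1}^d(x_{j-1}-ix_n))$ for $j=1,\ldots,n$, so that these correspond under $\phi_n^{-1}$ to the pure powers $x_0^d,\ldots,x_{n-1}^d$. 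The key identity to record is that for any $j\geq n$,
\begin{equation*}
|X\cap(H_1\cap\ldots\cap H_j)| = \dim_k\bigl(k[x_0,\ldots,x_{n-1}]/I\bigr),
\end{equation*}
where $I$ is the dehomogenization of the ideal of the section; by Lemma \ref{reductionofmonideal} this dimension coincides with $\dim_k(k[x_0,\ldots,x_{n-1}]/(x_0^d,\ldots,x_{n-1}^d,a_1,\ldots,a_{j-n}))$ for the corresponding monomials $a_k=\phi_n^{-1}(H_{n+k}\cap X)$.

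The substantive step is verifying that the constraints defining $\mathfrak{H}(X)$ translate precisely into the constraints on the right-hand side, namely that the $a_k$ are monomials of degree $\leq d$ and that $x_0^d,\ldots,x_{n-1}^d,a_1,\ldots,a_j$ are linearly independent. For the degree bound: membership in $\mathfrak{H}(X)$ forces each $H_\ell\cap X$ to be a degree-$d$ hypersurface that introduces no new irreducible components meeting the base points (this is the $\mu(X)=nd$ condition established in the preceding lemma), so each such section is a product of $d$ of the existing linear factors, possibly padded with copies of $V(x_n)$; under $\phi_n^{-1}$ this is exactly a monomial of degree $\leq d$. For linear independence: the condition in the definition of $p$-reducibility that $|H_1\cap\ldots\cap H_\ell\cap X|$ strictly decreases at each step is equivalent, via the identity above, to each newly added monomial $a_k$ strictly dropping the dimension of the quotient, which in turn is equivalent to $a_k$ not lying in the ideal generated by the previously chosen generators, i.e. to the linear independence of the monomial generators. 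I would spell out this dictionary carefully, since it is where the definitions of reducibility, of $\mu(X)$, and of $\mathfrak{H}(X)$ all meet the monomial picture.

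With the bijection of feasible sets and the term-wise equality of integers in hand, both parts follow by matching the maxima. For part (2), ${\mathfrak{R}\mathfrak{L}}_{N_d^{(n)}-n-1-j}(X)$ is by definition the maximum of $|X\cap L|$ with $L=\bigcap_{i=1}^{N_d^{(n)}-1-(N_d^{(n)}-n-1-j)}H_i = \bigcap_{i=1}^{n+j}H_i$ over $\mathfrak{H}(X)$, which under the dictionary is exactly $\max\dim_k(k[x_0,\ldots,x_{n-1}]/(x_0^d,\ldots,x_{n-1}^d,a_1,\ldots,a_j))$; I would take care to confirm the index bookkeeping relating the codimension-based subscript on the left to the count of added monomials on the right. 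For part (1), the greedy sequence ${\mathfrak{R}\mathfrak{L}}^G(X)$ is the maximum with respect to the total order $\leq$ of the tuple of intersection cardinalities as we cut down from $H_1\cap\ldots\cap H_n$, and this transports term-by-term to the tuple of quotient dimensions under the same order. The main obstacle I expect is purely the bookkeeping around $\mu(X)$: I must justify rigorously that optimizing only over sequences in $\mathfrak{H}(X)$ (with the minimal component count $nd$) is genuinely reflected by allowing only the padding component $V(x_n)$ and no other extraneous irreducible components, so that nothing outside the monomial ideal framework can contribute and no admissible monomial configuration is excluded; this is the one place where the geometric optimization and the combinatorial optimization could a priori diverge, and I would argue it using the $\mu$-lemma together with the fact that $V(x_n)$ misses the base locus $H_1\cap\ldots\cap H_n\cap X$.
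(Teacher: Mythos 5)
Your proposal follows essentially the same route as the paper: the paper offers no separate proof of this lemma but derives it directly from the symmetry reduction to the standard split sections $H_j\cap X = V(\prod_{i=1}^d(x_{j-1}-ix_n))$, the identity $|X\cap(H_1\cap\ldots\cap H_j)| = \dim_k(k[x_0,\ldots,x_{n-1}]/I)$, and Lemma~\ref{reductionofmonideal}, which is exactly your skeleton. Your additional care in translating the $\mu(X)=nd$ constraint and the strict-decrease condition into the degree bound and linear independence of the monomial generators is a fleshed-out version of what the paper leaves implicit, not a different argument.
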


The question of whether these two sequences of secant indices are equal is then equivalent to asking whether for the integer sequences over which the maximum is taken in the expression for ${\mathfrak{R}\mathfrak{L}}^G(v_d^{(n)}(\PP^n))$ above, is a maximal element with respect to the total order $\leq$ also a maximal element with respect to the partial order $\preceq$? Our reduction of the problem to the quotient ring setup helps with the visualization of the combinatorics behind this question. In the $n = 2$ case, as in Example \ref{quarticexample}, the diagram is simple enough to see that the two sequences are the same. But that this is true in general is a fact which requires a careful proof.

We can think of the combinatorial problem of computing the maximal dimensions of these quotient rings as a specific instance of a more general family of problems. Consider a poset which is decomposed into the disjoint union of two finite sets $C = A\cup B$. Denote by $\leq$ the partial order on $C$ and let $m = |A|$. The problem is to consider all sequences of length $m - 1$ obtained by  picking an element of $A$, and then removing it and all elements greater than it according to $\leq$ from $C$, subject to the condition that the chosen element does not remove any further elements from $A$. That is, one considers all sequences $(a_1,\ldots, a_{m-1})$ of distinct elements of $A$ such that $a_j\not\leq a_{i}$ for all $i > j$, and then considers the sequence of cardinalities obtained by removing $a_1,\ldots,a_{m-1}$ one at a time, in that order, along with all elements larger than each.

In our computation of the secant indices, $C$ is the set of monomials that represent the generators of the quotient $k[x_0,\ldots,x_{n-1}]/(x_0^d,\ldots, x_{n-1}^d)$, and the set $A$ is that consisting of all monomials of degree $\leq d$ aside from the pure powers $x_0^d,\ldots, x_{n-1}^d$ in $k[x_0,\ldots,x_{n-1}]$.

It is simple to construct examples of such posets in general where the greedy sequence is not maximal with respect to $\preceq$.

\begin{example}
Consider the poset $\{1, a, b, b^2, c, a^2, a^2, a^3, a^4, a^5\}$, where $\alpha\leq \beta$ if and only if $\alpha$ divides $\beta$. Let $A = \{1, a, b, c, a^2, a^3\}$, and let $B = \{b^2, a^4, a^5\}$. Choosing the elements of $A$ in the order $c, b, a^3, a^2, a$ yields the ``greedy'' sequence $(9,8,6,3,2,1)$. Whereas removing the elements in the order $a^3, a^2, a, c, b$ yields the sequence $(9, 6, 5, 4, 3, 1)$, which surpasses the greedy sequence in the fourth term.
\end{example}

This poset seems uncomfortably close to the setup we work with to compute the reducible secant indices. However, no such discrepancy arises in our computations due to the following consequence \cite[Proposition 3.12]{mermin1} of the Clements-Lindstr\"om theorem.

\begin{lemma}
\label{consequencecl}
Let $R = \{f_1,\ldots,f_r\}\subseteq k[x_0,\ldots,x_{n-1}]$ be a regular sequence of monomials, with degrees $e_j = \deg(f_j), e_1\leq \ldots \leq e_r$. Let $N$ be any homogeneous ideal containing $R$. Then there exists a lex ideal $L\subseteq k[x_0,\ldots,x_{n-1}]$ such that $N$ and $(x_0^{e_1},\ldots, x_{r - 1}^{e_r}) + L$ have the same Hilbert series.
\end{lemma}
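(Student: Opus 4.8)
The plan is to recognize this statement as Macaulay's theorem relative to a quotient by pure powers, that is, as a direct application of the Clements-Lindström theorem, with the bulk of the work being a reduction from an arbitrary monomial regular sequence to a regular sequence of pure powers. Set $P = (x_0^{e_1},\ldots,x_{r-1}^{e_r})$ and $A = k[x_0,\ldots,x_{n-1}]/P$. Since $e_1\leq\cdots\leq e_r$ and the remaining variables $x_r,\ldots,x_{n-1}$ carry no power relation (equivalently, an infinite exponent, ordered after all the finite ones), the hypotheses of the Clements-Lindström theorem hold for $A$ with respect to the lex order $x_0 > \cdots > x_{n-1}$. That theorem asserts that every homogeneous ideal of $A$ has the same Hilbert function as a lex ideal of $A$, and such lex ideals of $A$ correspond, via the standard lex-plus-powers dictionary, to ideals of $k[x_0,\ldots,x_{n-1}]$ of the form $P + L$ with $L$ a lex ideal. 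Consequently, whenever $N$ already contains $P$ the lemma is immediate: lexify the image $\overline{N}\subseteq A$ and take the associated $P + L$. This case alone covers the application to $v_d^{(n)}(\PP^n)$, where $R = (x_0^d,\ldots,x_{n-1}^d)$ is itself a sequence of pure powers.

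To obtain the stated generality I would reduce the arbitrary monomial regular sequence $R$ to $P$. First, replacing $N$ by its initial ideal with respect to any term order preserves the Hilbert series, and since each $f_j$ is a monomial it is its own leading term, so the initial ideal still contains $R$; thus I may assume $N$ is a monomial ideal containing $R$. Next, a regular sequence of monomials is necessarily pairwise coprime, so the supports of $f_1,\ldots,f_r$ are pairwise disjoint; after reindexing the variables I may assume $f_j$ is supported on a block $B_j$ with $\sum_{x\in B_j}\deg_x(f_j) = e_j$ and $x_{j-1}\in B_j$. The goal is then to produce from $N$ a monomial ideal $N'\supseteq P$ with the same Hilbert series, after which the previous paragraph concludes the argument.

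The main obstacle is this last reduction, which I would carry out one block at a time by collapsing each $f_j$ to the pure power $x_{j-1}^{e_j}$ while preserving the Hilbert function in every degree. It is tempting to hope for a fixed divisibility-preserving bijection between the standard monomials of $k[x_0,\ldots,x_{n-1}]/R$ and those of $A$, but no such poset isomorphism exists in general: already for $(x_0x_1)$ versus $(x_0^2)$ in $k[x_0,x_1]$ the two complete intersections share a Hilbert series, yet the divisibility poset of standard monomials of the first is a wedge of two chains joined only at $1$, while that of the second is a ladder in which infinitely many elements have two covers. One must therefore use the flexible compression moves that drive the proof of the Clements-Lindström theorem rather than a single bijection, collapsing within each disjoint block $B_j$ by a compression that preserves cardinalities degree by degree, and checking that compressions associated to different blocks commute and keep the ideal monomial and containing the collapsing powers. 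Verifying that these compressions neither create nor destroy standard monomials in any degree is the technical heart; this is precisely the reduction established in \cite[Proposition 3.12]{mermin1}, so I would either reproduce that compression argument in the present notation or cite it directly, and then invoke the pure-power Clements-Lindström theorem as in the first paragraph to conclude.
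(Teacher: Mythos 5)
The paper does not prove this lemma at all: it is stated verbatim as a known consequence of the Clements--Lindstr\"om theorem and attributed to \cite[Proposition 3.12]{mermin1}, so there is no internal proof to compare against. Your sketch is a correct and honest account of how that cited result goes --- pass to the initial ideal to reduce to monomial $N$, use that a monomial regular sequence has pairwise disjoint supports, compress each block to collapse $f_j$ to $x_{j-1}^{e_j}$, then apply Clements--Lindstr\"om for pure powers --- and your observation that no single divisibility-preserving bijection (in the spirit of the paper's Lemma \ref{reductionofmonideal}) can effect the collapse is accurate and worth making. Since you ultimately defer the compression step to the same reference the paper cites, your route is effectively the paper's, supplemented with a correct explanation of what that reference actually does.
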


By a \emph{lex} ideal, we mean a monomial ideal $I$ of $k[x_0,\ldots,x_{n-1}]$ such that the degree $d$ piece of $I$, $I_d$, is generated by an initial segment of the lexicographic sequence of degree $d$ monomials, for every $d$. This solves the quotient ring dimension problem for us because of the following.

\begin{lemma}
Let $a_1,\ldots,a_r$, $1\leq r\leq N_d^{(n)} - n - 1$ be any sequence of monomials of degree $d$ in $k[x_0,\ldots,x_n]$ so that $x_0^d,\ldots,x_{n-1}^d,a_1,\ldots,a_r$ are all linearly independent. Let $b_1,\ldots, b_r$ be the initial segment of the lexicographic sequence of degree $d$ monomials in $k[x_0,\ldots,x_n]$, excluding the pure powers $x_0^d,\ldots,x_n^d$. Then letting $I = (x_0^d, \ldots, x_{n-1}^d) + (a^\prime_1,\ldots,a^\prime_r)$ and $J = (x_0^d, \ldots, x_{n-1}^d) + (b_1,\ldots,b_r)$, we have $$\dim_k(k[x_0,\ldots,x_{n-1}]/I^\prime)\leq \dim_k(k[x_0,\ldots,x_{n-1}]/J^\prime),$$ where $I^\prime, J^\prime$ denote $I,J$ dehomogenized with respect to $x_n$, respectively.
\end{lemma}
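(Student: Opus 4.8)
The plan is to transfer the asserted inequality from the (non-homogeneous) dehomogenized quotients to the stabilized Hilbert functions of the graded ideals $I,J\subseteq S:=k[x_0,\ldots,x_n]$, where the Clements--Lindstr\"om machinery of Lemma \ref{consequencecl} is available. Set $A:=k[x_0,\ldots,x_{n-1}]$ and regard $I$ (resp.\ $J$) as generated by the pure powers $x_0^d,\ldots,x_{n-1}^d$ together with the $r$ degree-$d$ monomials $a_1,\ldots,a_r$ (resp.\ $b_1,\ldots,b_r$). Because $I'$ contains $x_0^d,\ldots,x_{n-1}^d$, every monomial of $A$ lying outside $I'$ has all exponents $<d$, hence degree at most $n(d-1)$; for $\ell\gg 0$ the homogenization $m\mapsto m\,x_n^{\ell-\deg m}$ sets up a bijection between these monomials and the degree-$\ell$ monomials of $S$ outside $I$, exactly in the spirit of the correspondence underlying Lemma \ref{reductionofmonideal}. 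This yields $\dim_k A/I' = \dim_k(S/I)_\ell$ for $\ell\gg 0$, and likewise for $J$, so it suffices to prove $\dim_k(S/I)_\ell \le \dim_k(S/J)_\ell$ for $\ell\gg 0$.

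Next I would bring in Clements--Lindstr\"om. Both $I$ and $J$ are generated by the regular sequence of monomials $x_0^d,\ldots,x_{n-1}^d$ together with $r$ further linearly independent degree-$d$ monomials, so $\dim_k I_d=\dim_k J_d=n+r$. Applying Lemma \ref{consequencecl} in the ring $S$, with this regular sequence (of length $n$ and constant degree $d$, so that the substitute pure-power ideal is again $(x_0^d,\ldots,x_{n-1}^d)$), produces a lex ideal $L\subseteq S$ for which $S/I$ and $S/\big((x_0^d,\ldots,x_{n-1}^d)+L\big)$ have the same Hilbert function, in particular the same stable value. It remains to compare $(x_0^d,\ldots,x_{n-1}^d)+L$ with $J$.

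The crux is to identify $J$ as the extremal member of this family. Passing to $R:=S/(x_0^d,\ldots,x_{n-1}^d)$, one has $S/I=R/\bar I$ and $S/J=R/\bar J$, so the comparison is really one between images of ideals of $R$ having equal degree-$d$ dimension $r$. Here I would verify the combinatorial point that $b_1,\ldots,b_r$ are precisely the $r$ lex-largest degree-$d$ monomials of $R$: the monomials of $R_d$ are all degree-$d$ monomials of $S$ except $x_0^d,\ldots,x_{n-1}^d$, and since $x_n^d$ is the global lex-minimum in degree $d$ it comes last in the lex enumeration of $R_d$, after all $N_d^{(n)}-n-1$ non-pure-powers; as $r\le N_d^{(n)}-n-1$, the lex-initial segment $b_1,\ldots,b_r$ of the non-pure-powers is exactly the top $r$ monomials of $R_d$. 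Thus $\bar J$ is the $R$-lex ideal with $r$-dimensional degree-$d$ part, which by the Clements--Lindstr\"om theorem has the termwise smallest graded-piece dimensions $\dim_k(\cdot)_\ell$ among all ideals of $R$ with that degree-$d$ dimension. Applied to the image of $(x_0^d,\ldots,x_{n-1}^d)+L$, this gives $\dim_k\big(S/((x_0^d,\ldots,x_{n-1}^d)+L)\big)_\ell \le \dim_k(S/J)_\ell$ for all $\ell$, which chains with the Hilbert-function equality of the previous step and the bridge of the first step to finish.

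I expect the last step to be the main obstacle. Lemma \ref{consequencecl} is phrased only as an equality-of-Hilbert-series existence statement, whereas the argument needs the \emph{extremality} of lex ideals: that the $R$-lex segment minimizes the dimension of every higher graded piece of the ideal (equivalently maximizes the quotient) among ideals of $R$ with prescribed degree-$d$ dimension. Isolating this shadow-minimization content of Clements--Lindstr\"om, and pairing it with the verification that $b_1,\ldots,b_r$ realize the $R$-lex segment for every admissible $r$, is where the real work lies; the two reductions preceding it are essentially bookkeeping.
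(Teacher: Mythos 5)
Your plan follows the paper's proof quite closely: reduce to comparing stabilized Hilbert functions, invoke Lemma \ref{consequencecl} to replace $I$ by $(x_0^d,\ldots,x_{n-1}^d)+L$ with $L$ lex, and then compare with $J$; your check that deleting the pure powers from a lex-initial segment of degree-$d$ monomials leaves exactly the lex-initial segment $b_1,\ldots,b_r$ of non-pure-powers (using $r\leq N_d^{(n)}-n-1$ so that $x_n^d$ never enters) is precisely the identification the paper also needs. The one point of divergence is the step you flag as ``the main obstacle'': you propose to finish by invoking the full extremality form of Clements--Lindstr\"om, namely that the lex segment minimizes the growth of every higher graded piece among ideals of $k[x_0,\ldots,x_n]/(x_0^d,\ldots,x_{n-1}^d)$ with prescribed degree-$d$ dimension. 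That statement is true and would close the argument, but the paper sidesteps it with a one-line containment observation: since $I$ and $(x_0^d,\ldots,x_{n-1}^d)+L$ have equal degree-$d$ dimension $n+r$ and $L$ is lex, the degree-$d$ part of $(x_0^d,\ldots,x_{n-1}^d)+L$ is spanned by the pure powers together with $b_1,\ldots,b_r$, so that $J=(x_0^d,\ldots,x_{n-1}^d)+(L_d)\subseteq (x_0^d,\ldots,x_{n-1}^d)+L$; an inclusion of ideals immediately gives $\dim_k(k[x_0,\ldots,x_n]/J)_\ell \geq \dim_k\bigl(k[x_0,\ldots,x_n]/((x_0^d,\ldots,x_{n-1}^d)+L)\bigr)_\ell = \dim_k(k[x_0,\ldots,x_n]/I)_\ell$ termwise, and your dehomogenization bridge converts this to the claimed inequality. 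So the ``shadow-minimization'' content of Clements--Lindstr\"om beyond Lemma \ref{consequencecl} is not actually needed; the real work you isolate dissolves into the observation that $J$ is generated by the pure powers together with a subset of $L_d$.
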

\begin{proof}
By Lemma \ref{consequencecl}, since $x_0^d,\ldots, x_{n-1}^d$ is a regular sequence, we know that there is a lex ideal $L$ such that $I$ has the same Hilbert series as the ideal $(x_0^d,\ldots,x_{n-1}^d) + L$. This Hilbert series is smaller at each term than that of the ideal $(x_0^d,\ldots,x_{n-1}^d) + L_d$. Since in particular the degree $d$ parts of the ideals $I$ and $(x_0^d,\ldots,x_{n-1}^d) + L$ must have the same dimensions as $k$-vector spaces we know the degree $d$ part of $(x_0^d,\ldots,x_{n-1}^d) + L_d$ is generated by $r + n$ distinct monomials.

Denoting by $b_1,\ldots,b_r$ these monomials other than the pure powers $x_0^d,\ldots,x_{n-1}^d$, in lexicographic order, we arrive at the desired result.
\end{proof}

This lemma proves that the two sequences of reducible secant indices and greedy indices are the same, and together with our previous observations in this section, shows their terms are identical to those that arise from the poset problem associated to the polynomial ring quotients above.

\begin{theorem}
Let $X = v_d^{(n)}(\PP^n)$, $n,d > 1$. Then $$\mathfrak{R}\mathfrak{L}(X) = {\mathfrak{R}\mathfrak{L}}^G(X),$$ and ${\mathfrak{R}\mathfrak{L}}_{N_d^{(n)} - 1 - n - j}(v_d^{(n)}(\PP^n)) = \dim_k(k[x_0,\ldots,x_{n-1}])/I^{(n,d)}_j$ where the ideal $I^{(n,d)}_j$ is the dehomogenization with respect to $x_n$ of the sum of the ideal $(x_0^d, \ldots, x_{n-1}^d)$ with the ideal generated by the lexicographic sequence of degree $d$ monomials excluding pure powers of length $N_d^{(n)} - n - j$ in $k[x_0,\ldots,x_n]$.
\end{theorem}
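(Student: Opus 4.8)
The plan is to treat this theorem as the synthesis of the monomial reduction and the Clements--Lindström consequence already in place, since essentially all of the combinatorial content has been isolated in the preceding lemmas; the work here is to assemble them and to verify that a single lexicographic sequence of hyperplanes is simultaneously optimal and admissible. I would first fix the symmetry-reduced situation in which $H_j\cap X = V(\prod_{i=1}^d(x_{j-1}-ix_n))$ for $j=1,\dots,n$, so that by Lemma~\ref{reductionofmonideal} every later section $H_1\cap\dots\cap H_k\cap X$ with $k\geq n$ is recorded by the dimension of a monomial quotient $k[x_0,\dots,x_{n-1}]/I$, where $I$ is obtained from $(x_0^d,\dots,x_{n-1}^d)$ by adjoining the degree-$\leq d$ monomials dual to the chosen components. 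Under the divisibility-preserving dehomogenization bijection $\phi_n$, degree-$d$ monomials in $k[x_0,\dots,x_n]$ correspond to degree-$\leq d$ monomials in $k[x_0,\dots,x_{n-1}]$, which lets me pass freely between the two descriptions.

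For the explicit formula I would start from the preceding lemma, which expresses the relevant reducible index as the maximum of $\dim_k k[x_0,\dots,x_{n-1}]/(x_0^d,\dots,x_{n-1}^d,a_1,\dots,a_j)$ over all linearly independent choices of monomials $a_1,\dots,a_j$ of degree $\leq d$. The final lemma before the theorem then identifies this maximum: its proof, via Lemma~\ref{consequencecl}, shows that the corresponding lexicographic segment (the initial segment of the lex sequence of degree-$d$ monomials in $k[x_0,\dots,x_n]$, with pure powers removed, dehomogenized) maximizes the quotient dimension. This pins the maximizer down as the lex ideal $I^{(n,d)}_j$ of the statement and yields the second assertion.

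For the equality $\mathfrak{R}\mathfrak{L}(X)={\mathfrak{R}\mathfrak{L}}^G(X)$ I would exploit that these lexicographic segments are nested: the shorter segment is a prefix of the longer one. Hence a single sequence of hyperplanes, the one whose successive sections correspond to adjoining the lex monomials one at a time, realizes every term-wise maximum $\mathfrak{R}\mathfrak{L}_j(X)$ at once. I must then check that this sequence lies in $\mathfrak{H}(X)$. The $\mu(X)=nd$ condition holds because, after applying $\phi_n$, each section is a union of the $nd$ grid hyperplanes already present together with copies of the extraneous component $V(x_n)$, which misses $H_1\cap\dots\cap H_n\cap X$; this is exactly the argument proving $\mu(v^{(n)}_d(\PP^n))=nd$. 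The strict-decrease requirement of $d$-reducibility follows from a lex-domination observation: if one non-pure-power monomial divides another, the multiple is lexicographically larger, so reading the lex sequence in decreasing order guarantees that no previously adjoined monomial, and no pure power $x_s^d$, divides the next one; each adjunction therefore strictly drops the quotient dimension.

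Finally, having produced one admissible sequence whose associated integer tuple equals $\mathfrak{R}\mathfrak{L}(X)$ term by term, I would conclude by observing that term-wise domination in $\preceq$ forces domination in the total order $\leq$: for any admissible sequence $(c_j)$ we have $(c_j)\preceq \mathfrak{R}\mathfrak{L}(X)$ and hence $(c_j)\leq \mathfrak{R}\mathfrak{L}(X)$, so the lexicographic tuple is the $\leq$-maximum defining ${\mathfrak{R}\mathfrak{L}}^G(X)$, giving ${\mathfrak{R}\mathfrak{L}}^G(X)=\mathfrak{R}\mathfrak{L}(X)$. The main obstacle is not any single sharp estimate, since the essential inequality is imported from Lemma~\ref{consequencecl}; rather it is the careful verification that this one lex sequence is genuinely admissible and simultaneously optimal, that is, the nesting together with the lex-domination check that upgrades the greedy construction to a term-wise maximum.
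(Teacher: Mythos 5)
Your proposal is correct and follows essentially the same route as the paper, which derives the theorem directly from the preceding two lemmas (the identification of the reducible indices with monomial quotient dimensions and the Clements--Lindstr\"om consequence): the lex segment maximizes each term, the segments are nested so one admissible hyperplane sequence realizes all term-wise maxima simultaneously, and term-wise domination upgrades to domination in the total order. You in fact supply more detail than the paper does, notably the explicit check that the lex sequence satisfies the strict-decrease and $\mu(X)=nd$ admissibility conditions, which the paper leaves implicit.
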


It would be interesting to have a simple formula that produces these sequences. For $n = 2$ or for $d = 2$, the corresponding sequences as the other number varies follow simple patterns. But as $n,d$ both grow these patterns become increasingly complex, as we will soon illustrate with several examples.

Our expectation is that these two sequences are in fact also equal to $\mathfrak{L}(X)$. We leave this as a conjecture.

\begin{conjecture}
\label{veroneseconjecture}
For each $n,d > 1$, $${\mathfrak{R}\mathfrak{L}}^G(X) = {\mathfrak{R}\mathfrak{L}}(X) = \mathfrak{L}(X),$$ where $X = v_d^{(n)}(\PP^n)$. In other words, the maximal number of points that can be contained in the intersection of $r$ linearly independent degree $d$ hypersurfaces in $\PP^n$ is ${\mathfrak{R}\mathfrak{L}}^G_{N_d - r - 1}(X)$, given that the intersection is finite and reduced.
\end{conjecture}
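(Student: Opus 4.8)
The natural route to Conjecture~\ref{veroneseconjecture} is a proof conditional on the Eisenbud--Green--Harris conjecture \cite{eisenbud_green_harris1}. The theorem above already establishes ${\mathfrak{R}\mathfrak{L}}(X) = {\mathfrak{R}\mathfrak{L}}^G(X)$, and ${\mathfrak{R}\mathfrak{L}}(X)\preceq \mathfrak{L}(X)$ holds for every smooth nondegenerate $X$. Hence the entire content of the conjecture reduces to the reverse term-wise inequality $\mathfrak{L}_j(X)\leq {\mathfrak{R}\mathfrak{L}}_j(X)$ for each $0\leq j\leq \codim(X)$, and the plan is to prove exactly this.

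First I would translate the problem into commutative algebra. Since the hyperplane sections of $X = v_d^{(n)}(\PP^n)$ are precisely the degree $d$ hypersurfaces of $\PP^n$, the index $\mathfrak{L}_j(X)$ is the maximum of $|\Gamma|$ over all reduced, zero-dimensional $\Gamma = V(F_1,\ldots,F_s)\subseteq\PP^n$ cut out by $s := N_d^{(n)} - 1 - j$ linearly independent forms of degree $d$. Writing $S = k[x_0,\ldots,x_n]$ and $I = (F_1,\ldots,F_s)$, reducedness gives $|\Gamma| = \deg(\Gamma) = \dim_k (S/I)_\ell$ for $\ell\gg 0$, and since $I$ is generated in degree $d$ we have $\dim_k I_d = s$. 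Because $\Gamma$ is zero-dimensional, $I$ has height $n$, so $n$ general $k$-linear combinations of the $F_i$ form a regular sequence of $n$ forms of degree $d$ lying in $I$. The Eisenbud--Green--Harris conjecture then supplies a \emph{lex-plus-powers} ideal $J = (x_0^d,\ldots,x_{n-1}^d) + L$, with $L$ a lex ideal, having the same Hilbert function as $I$; in particular $\dim_k J_d = s$ and $\dim_k (S/J)_\ell = |\Gamma|$ for $\ell\gg 0$.

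Next I would dominate this lex-plus-powers quotient by a monomial configuration recognizable to the reducible-secant-index computation. Let $\tilde J = (J_d)$ be the ideal generated by the $s$ monomials spanning $J_d$; these are the $n$ pure powers $x_0^d,\ldots,x_{n-1}^d$ together with $s-n$ further monomials, none of which is $x_n^d$ since $x_n^d$ is lex-last. From $\tilde J\subseteq J$ we get $\dim_k(S/\tilde J)_\ell \geq \dim_k(S/J)_\ell = |\Gamma|$ for $\ell\gg 0$. As $\tilde J\supseteq (x_0^d,\ldots,x_{n-1}^d)$, the scheme $V(\tilde J)$ is a fat point supported at $(0:\cdots:0:1)$, so its length equals $\dim_k S'/\tilde J'$, where $S' = k[x_0,\ldots,x_{n-1}]$ and $\tilde J'$ is the dehomogenization of $\tilde J$ with respect to $x_n$; thus $\dim_k(S/\tilde J)_\ell = \dim_k S'/\tilde J'$ for $\ell\gg 0$. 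Finally $\tilde J' = (x_0^d,\ldots,x_{n-1}^d, a_1,\ldots,a_{s-n})$ for distinct monomials $a_i$ of degree $\leq d$ (distinctness holds because two degree $d$ monomials that agree after setting $x_n = 1$ must already be equal), so the lemma expressing the reducible secant indices as maxima of such quotient dimensions yields $\dim_k S'/\tilde J'\leq {\mathfrak{R}\mathfrak{L}}_j(X)$. Chaining these gives $|\Gamma|\leq {\mathfrak{R}\mathfrak{L}}_j(X)$, which is the desired inequality; note this argument even shows the maximum defining $\mathfrak{L}_j(X)$ is attained on a fully reducible configuration.

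The principal obstacle is that the Eisenbud--Green--Harris conjecture is itself open, which is precisely why Conjecture~\ref{veroneseconjecture} cannot at present be promoted to a theorem: the argument above is unconditional only modulo that input. The one genuinely technical point internal to the argument is the passage through dehomogenization---identifying the length of the fat point $V(\tilde J)$ with $\dim_k S'/\tilde J'$ and checking that the generator count descends correctly---but this is exactly the content already made rigorous by Lemma~\ref{reductionofmonideal} and the discussion preceding it, so I would invoke that reduction rather than redo the bookkeeping.
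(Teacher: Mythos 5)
Your proposal is correct in the only sense available here: the statement is a conjecture, and what you give is precisely the conditional argument the paper itself records in the proposition ``The truth of the EGH-conjecture would imply that of Conjecture~\ref{veroneseconjecture}'' --- pass from the forms cutting out a finite reduced intersection to a regular sequence of $n$ degree-$d$ forms, invoke EGH (via Caviglia--Maclagan and Lemma~\ref{consequencecl}) to replace the ideal by a lex-plus-powers ideal with the same Hilbert function, truncate to the degree-$d$ generators, dehomogenize, and compare with the monomial quotients computing $\mathfrak{R}\mathfrak{L}_j(X)$. Your write-up is in fact slightly more careful than the paper's on two minor points (obtaining the regular sequence as general linear combinations, and checking that $x_n^d$ cannot appear among the degree-$d$ generators), but the route is the same.
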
 

If true, Conjecture \ref{veroneseconjecture} could be thought of a quasi-enumerative version of B\'ezout's theorem. There is some evidence for it from the currently open Eisenbud-Green-Harris conjecture, which has received a large amount of attention in the last few decades admitting only partial progress \cite{abedelfatah1} \cite{caviglia_maclagan1} \cite{eisenbud_green_harris1} \cite{gunturkun_hochster1}.

\begin{conjecture}[EGH \cite{eisenbud_green_harris1}]
Let $I$ be a homogeneous ideal in the polynomial ring $k[x_0,\ldots,x_{n-1}]$ containing a length $n$ regular sequence $f_1,\ldots, f_n$ of degrees $\deg(f_i) = a_i$, where $2\leq a_1\leq\ldots\leq a_n$. Then $I$ has the same Hilbert function as an ideal containing $x_0^{a_1},\ldots,x_{n-1}^{a_{n}}$.
\end{conjecture}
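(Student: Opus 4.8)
The natural plan is to reduce the conjecture to the single case already settled in this paper, namely the case in which the regular sequence is the sequence of pure powers $x_0^{a_1},\ldots,x_{n-1}^{a_n}$ itself. Indeed, Lemma \ref{consequencecl} is exactly the statement that an ideal containing such a regular sequence of pure powers has the same Hilbert series as a lex-plus-powers ideal $(x_0^{a_1},\ldots,x_{n-1}^{a_n}) + L$ with $L$ lex, and hence the same Hilbert function as an ideal containing the pure powers. So the entire burden is to pass from an arbitrary regular sequence $f_1,\ldots,f_n$ of the prescribed degrees to the pure-power regular sequence without changing the Hilbert function of $k[x_0,\ldots,x_{n-1}]/I$.

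First I would perform a Gr\"obner degeneration. After a generic linear change of coordinates, replacing $I$ by its initial ideal with respect to a fixed term order preserves the Hilbert function of the quotient and turns $I$ into a monomial ideal $\mathrm{in}(I)$ containing $\mathrm{in}(f_1,\ldots,f_n)$, since $(f_1,\ldots,f_n)\subseteq I$ forces $\mathrm{in}(f_1,\ldots,f_n)\subseteq \mathrm{in}(I)$. Because $f_1,\ldots,f_n$ is a length-$n$ regular sequence in the $n$-dimensional ring $k[x_0,\ldots,x_{n-1}]$, the quotient by $(f_1,\ldots,f_n)$ is Artinian with the complete-intersection Hilbert function determined by $a_1,\ldots,a_n$; consequently $\mathrm{in}(f_1,\ldots,f_n)$ is an Artinian monomial ideal carrying that same Hilbert function. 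This reduces the conjecture to the purely combinatorial assertion that a monomial ideal containing an Artinian monomial ideal of the complete-intersection Hilbert function of degrees $a_1,\ldots,a_n$ realizes its Hilbert function by some monomial ideal containing the pure powers $x_0^{a_1},\ldots,x_{n-1}^{a_n}$.

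The final and decisive step is to compare the degenerated complete intersection with the pure powers. When $\mathrm{in}(f_1,\ldots,f_n)$ happens to equal $(x_0^{a_1},\ldots,x_{n-1}^{a_n})$---for instance if each $f_i$ is a pure power plus terms of lower weight with respect to a suitable order---then Lemma \ref{consequencecl} applies to $\mathrm{in}(I)$ directly and concludes the argument. The obstacle, and the reason the conjecture is hard, is that a general regular sequence does not degenerate to the pure powers: its initial ideal is some other Artinian monomial ideal of the same Hilbert function, and one must still establish that among all ideals containing a complete intersection of degrees $a_1,\ldots,a_n$ it is the lex-plus-powers ideal built from the pure powers that attains the extremal Hilbert function. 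I expect this extremality comparison to be the main obstacle; it is precisely the combinatorial core that remains open, the tractable cases (the pure-power case of Clements-Lindstr\"om, the case $a_i = 2$ for all $i$, small numbers of variables, and various numerical hypotheses on the $a_i$) being exactly those in which this last comparison can be carried out by hand or by a specialized deformation argument.
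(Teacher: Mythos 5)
You have not proven this statement, and neither does the paper: this is the Eisenbud--Green--Harris conjecture itself, which the paper explicitly describes as ``currently open'' and only ever uses hypothetically (to argue that its truth would imply Conjecture \ref{veroneseconjecture}). So there is no proof in the paper to compare yours against, and your proposal should be judged purely on whether it closes the problem. It does not, and to your credit you essentially say so in your final paragraph: the ``extremality comparison'' you identify as the remaining obstacle \emph{is} the conjecture, not a residue of it.

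Concretely, the gap is in the degeneration step. Lemma \ref{consequencecl} (Mermin's consequence of Clements--Lindstr\"om) requires the regular sequence to consist of \emph{monomials}, and a length-$n$ monomial regular sequence in $n$ variables is forced to be pure powers of distinct variables up to scalar; that is why the pure-power case is accessible and the general case is not. Passing to a generic initial ideal gives $\mathrm{in}(I)\supseteq \mathrm{in}\bigl((f_1,\ldots,f_n)\bigr)$, but the initial ideal of a complete intersection ideal is in general neither $(x_0^{a_1},\ldots,x_{n-1}^{a_n})$ nor any monomial complete intersection: it is merely some Artinian monomial ideal with the complete-intersection Hilbert function, typically with many more generators, and it need not contain $x_i^{a_i}$ for any $i$. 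Hence Lemma \ref{consequencecl} does not apply to $\mathrm{in}(I)$, and your reduction lands on the assertion that every monomial ideal containing an Artinian monomial ideal with the complete-intersection Hilbert function obeys the lex-plus-powers bound. That assertion has a strictly weaker hypothesis than EGH (containment of such a monomial ideal does not imply containment of an actual regular sequence of degrees $a_1,\ldots,a_n$), so you have replaced the conjecture with an a priori stronger and equally unproven statement rather than reduced it to the settled pure-power case.
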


\begin{proposition}
The truth of the EGH-conjecture would imply that of Conjecture \ref{veroneseconjecture}.
\end{proposition}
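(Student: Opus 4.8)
The plan is to establish the one inequality that remains. From the results already proved in this section we know ${\mathfrak{R}\mathfrak{L}}^G(X) = {\mathfrak{R}\mathfrak{L}}(X)$, that ${\mathfrak{R}\mathfrak{L}}^G(X)\preceq{\mathfrak{R}\mathfrak{L}}(X)\preceq\mathfrak{L}(X)$, and that the common value of the two reducible sequences is computed term by term by dehomogenized lex-plus-powers quotients. Writing $N = N_d^{(n)}$ and using the correspondence between hyperplane sections of $X = v_d^{(n)}(\PP^n)$ and degree $d$ hypersurfaces of $\PP^n$, the index $\mathfrak{L}_j(X)$ equals the maximal cardinality of a finite, reduced common zero locus $Z = V(f_1,\ldots,f_r)\subseteq\PP^n$ of $r = N - 1 - j$ linearly independent degree $d$ forms. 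Thus it suffices to show, granting EGH, that every such $Z$ satisfies $|Z|\leq {\mathfrak{R}\mathfrak{L}}_j(X)$; equality then follows for all $j$ and propagates to all three sequences, which is Conjecture \ref{veroneseconjecture}.

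First I would pass from $\PP^n$ to a homogeneous Artinian quotient in one fewer variable. Fixing $Z$ and choosing general coordinates, I can arrange $Z\cap\{x_n = 0\} = \emptyset$, so that $x_n$ is a nonzerodivisor on $S/I(Z)$, where $S = k[x_0,\ldots,x_n]$ and $I(Z)$ is the saturated ideal. Reducing modulo $x_n$ produces a homogeneous ideal $\bar{I}\subseteq S' = k[x_0,\ldots,x_{n-1}]$ with $\dim_k S'/\bar I = \lim_t \dim_k(S/I(Z))_t = |Z|$, the last equality because $Z$ is reduced of dimension zero. The restrictions $\bar f_i = f_i|_{x_n = 0}$ lie in $\bar I$, and since general coordinates make $Z$ miss the hyperplane at infinity these forms have no common zero in $\PP^{n-1}$; hence general $k$-combinations of them give a length $n$ regular sequence of degree $d$ forms inside $\bar I$.

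Next I would invoke EGH in $S'$ exactly as stated: because $\bar I$ contains a regular sequence of degrees $(d,\ldots,d)$ with $d\geq 2$, there is a homogeneous ideal $\tilde I$ containing $x_0^d,\ldots,x_{n-1}^d$ with the same Hilbert function as $\bar I$, so $\dim_k S'/\tilde I = |Z|$. Having landed inside the power complete intersection, I would close the argument with the Clements--Lindstr\"om machinery already developed here: Lemma \ref{consequencecl} together with the monomial maximality lemma shows that among ideals containing $(x_0^d,\ldots,x_{n-1}^d)$ the lex-plus-powers ideal maximizes the dimension of the quotient, while Lemma \ref{reductionofmonideal} identifies that maximal dimension with ${\mathfrak{R}\mathfrak{L}}_j(X)$. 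Comparing $\tilde I$ against the appropriate lex-plus-powers ideal would then yield $|Z|\leq{\mathfrak{R}\mathfrak{L}}_j(X)$.

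The hard part will be the bookkeeping in this final comparison: one must ensure that the lex-plus-powers ideal against which $\tilde I$ is measured is precisely the one computing ${\mathfrak{R}\mathfrak{L}}_j(X)$ for the same $j$. Clements--Lindstr\"om extremality is cleanest when the dimension of the degree $d$ part of the ideal is fixed, and the delicate point is that the Artinian reduction replaces the $r$ original forms by $\dim_k\bar I_d = \dim_k I(Z)_d - \dim_k I(Z)_{d-1}$ degree $d$ conditions, a number that can fall below $r$ precisely when $r$ exceeds $\dim_k S'_d$, that is, when $Z$ is forced to lie on hypersurfaces of degree $< d$. For the regime $r\leq\dim_k S'_d$ the restrictions $\bar f_i$ can be kept linearly independent by the genericity of the coordinate choice, so the comparison is against a lex ideal with at least $r$ degree $d$ generators and the monotonicity of the lex quotient in the number of generators keeps the inequality pointing the right way. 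For the complementary regime the degree $d$ count alone is insufficient, and I would instead compare the full Hilbert functions, using that EGH and Lemma \ref{consequencecl} preserve the entire Hilbert series and that lex-plus-powers ideals are extremal in every degree; the remaining task is to verify that the lower-degree constraints imposed on $Z$ in this regime are exactly those saturated by the ${\mathfrak{R}\mathfrak{L}}_j$ configuration. The EGH input itself enters only once, as a black box moving from a regular-sequence-containing ideal to one containing the coordinate powers; the genuine work lies in this reconciliation and in the genericity claims of the second paragraph.
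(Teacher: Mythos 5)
Your overall strategy---reduce to showing $|Z|\leq {\mathfrak{R}\mathfrak{L}}_j(X)$ for every finite reduced $Z = V(f_1,\ldots,f_r)$, feed a degree-$d$ regular sequence into EGH, and finish with the Clements--Lindstr\"om comparison of Lemma~\ref{consequencecl}---is the right one, but the decision to pass first to the Artinian reduction $\bar I\subseteq k[x_0,\ldots,x_{n-1}]$ of the \emph{saturated} ideal $I(Z)$ creates exactly the problem you flag in your last paragraph, and that problem is not a side case: it is the main case, and it is left unresolved. After reducing modulo a general $x_n$, the number of independent degree-$d$ elements of $\bar I$ is $\dim_k I(Z)_d-\dim_k I(Z)_{d-1}$, which is forced below $r$ as soon as $r>\binom{n+d-1}{d}=\dim_k k[x_0,\ldots,x_{n-1}]_d$; since $r$ ranges up to $N_d^{(n)}-1$, this happens for most $j$ (already for $r\geq 5$ when $n=2$, $d=3$). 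In that regime the EGH output $\tilde I$ can no longer be matched, degree by degree, against the specific lex-plus-powers ideal whose quotient computes ${\mathfrak{R}\mathfrak{L}}_j(X)$, and your proposed repair (``compare the full Hilbert functions\,\dots\,the remaining task is to verify\,\dots'') is precisely the step that would constitute the proof. Even in the good regime, the reference ideal for ${\mathfrak{R}\mathfrak{L}}_j(X)$ is, per the theorem preceding Conjecture~\ref{veroneseconjecture}, a dehomogenized lex-plus-powers ideal of the $(n+1)$-variable ring $k[x_0,\ldots,x_n]$, so an identification with the $n$-variable lex ideal produced by your reduction is still owed.

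The paper's proof avoids all of this by never saturating and never reducing modulo $x_n$ until the last line. It applies EGH---through Caviglia--Maclagan's reduction to the monomial case together with Lemma~\ref{consequencecl}---directly to the ideal $(F_1,\ldots,F_{n+r})\subseteq k[x_0,\ldots,x_n]$ generated by the forms themselves. That ideal has degree-$d$ component of dimension exactly $n+r$, so the Hilbert-series-equivalent ideal $(x_0^d,\ldots,x_{n-1}^d)+L$ is forced to have $L_d$ equal to the lex segment of length $r$: this is the reference ideal on the nose, with no bookkeeping. Passing from $L$ to the subideal generated by $L_d$ only enlarges the quotient, and the point count $|Z|$ is then read off as the stable value of the Hilbert function, i.e.\ the dimension of the dehomogenized quotient. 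If you want to salvage your route, the cleanest fix is to abandon the Artinian reduction and run your second and third paragraphs on $(f_1,\ldots,f_r)$ in the full homogeneous coordinate ring of $\PP^n$.
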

\begin{proof}
Suppose we have any reduced complete intersection of degree $d$ hypersurfaces $T_1,\ldots,T_n$ in $\PP^n$, and let $T_{n+1},\ldots,T_{n + r}$ be degree $d$ hypersurfaces so that $T_1,\ldots,T_{n+r}$ are linearly independent. Write $T_j = V(F_j)$, $F_j\in k[x_0,\ldots,x_n]$ for each $j$. So the $F_1,\ldots,F_n$ form a regular sequence.

Suppose the EGH-conjecture is true. Then by \cite[Proposition 9]{caviglia_maclagan1} and Lemma \ref{consequencecl} see that the ideal $(F_1,\ldots,F_{n+r})$ has the same Hilbert series as an ideal of the form $(x_0^d,\ldots,x_{n-1}^d) + L$, where $L$ is a lex ideal. The degree $d$ part of $L$ must then be generated by the lexicographic sequence of monomials of degree $d$ excluding the pure powers and of length $r$. Thus $$\dim_k(k[x_0,\ldots,x_{n-1}]/I)\leq \dim_k(k[x_0,\ldots,x_{n-1}]/J),$$ where $I, J$ are the ideals $(F_1,\ldots,F_{n+r})$, $(x_0^d,\ldots,x_{n-1}^d) + L_d$ dehomogenized with respect to $x_n$.
\end{proof}

We conclude this section with several computations of the greedy sequence of secant indices and a specific instance of Conjecture \ref{veroneseconjecture}. For a given $n, d$, computing ${\mathfrak{R}\mathfrak{L}}^G(v_d^{(n)}(\PP^n))$ can be done easily using a computer algebra system such as SageMath \cite{sage1} by computing the degrees of the subschemes defined by the appropriate monomial ideals. In the following example, we use vertical bars to indicate where gaps of size $\geq 2$ occur in the sequences.

\begin{example}\label{exampleseqsveronese} Let $X = v_d^{(n)}(\PP^n)$.\\
$n = 2$

\begin{itemize}
\item $d = 2$, ${\mathfrak{R}\mathfrak{L}}^G(X) = (1, 2, 3, 4),$
\item $d = 3$, ${\mathfrak{R}\mathfrak{L}}^G(X) = (1, 2, 3, 4, 5, 6, 7 \mid9),$
\item $d = 4$, ${\mathfrak{R}\mathfrak{L}}^G(X) = (1, 2, 3, 4, 5, 6, 7, 8, 9, 10\mid 12, 13 \mid16),$
\item $d = 5$, \begin{align*}{\mathfrak{R}\mathfrak{L}}^G(X) =  &(1, 2, 3, 4, 5, 6, 7, 8, 9, 10, 11, 12, 13 \mid 15, \\&16, 17 \mid 20, 21 \mid 25),\end{align*}
\item $d = 6$, \begin{align*}{\mathfrak{R}\mathfrak{L}}^G(X) = &(1, 2, 3, 4, 5, 6, 7, 8, 9, 10, 11, 12, 13, 14, 15, \\&16 \mid 18, 19, 20, 21 \mid 24, 25, 26 \mid 30, 31 \mid 36),\end{align*}
\item $d = 7$, \begin{align*}{\mathfrak{R}\mathfrak{L}}^G(X) = &(1, 2, 3, 4, 5, 6, 7, 8, 9, 10, 11, 12, 13, 14, 15, \\&16, 17, 18, 19 \mid 21,22, 23, 24, 25 \mid 28, 29, \\&30, 31 \mid 35, 36, 37 \mid 42, 43 \mid 49).\end{align*}
\end{itemize}

$n = 3$

\begin{itemize}
\item $d = 2$, ${\mathfrak{R}\mathfrak{L}}^G(X) = (1, 2, 3, 4, 5, 6 \mid8),$
\item $d = 3$, \begin{align*}{\mathfrak{R}\mathfrak{L}}^G(X) = &(1, 2, 3, 4, 5, 6, 7, 9, 10, 11, 12, 13 \mid 15 \\ &\mid 18, 19 \mid 21 \mid 27),\end{align*}
\item $d = 4$, \begin{align*}{\mathfrak{R}\mathfrak{L}}^G(X) = &(1, 2, 3, 4, 5, 6, 7, 8, 9, 10 \mid 12, 13 \mid 16, \\ &17, 18, 19, 20, 21, 22\mid 24, 25 \mid 28 \mid 32, \\&33, 34 \mid 36, 37 \mid 40 \mid 48, 49 \mid 52 \mid 64),\end{align*}
\end{itemize}

$n = 4$

\begin{itemize}
\item $d = 2$, ${\mathfrak{R}\mathfrak{L}}^G(X) = (1, 2, 3, 4, 5, 6 \mid 8, 9, 10 \mid12 \mid 16),$
\item $d = 3$, \begin{align*}{\mathfrak{R}\mathfrak{L}}^G(X) = &(1, 2, 3, 4, 5, 6, 7\mid 9, 10, 11, 12, 13 \mid 15 \mid 18, 19 \\& \mid 21 \mid 27, 28, 29, 30, 31 \mid 33 \mid 36, 37 \mid 39 \mid 45 \\&\mid 54, 55 \mid 57 \mid 63 \mid 81),\end{align*}
\end{itemize}

$n = 5$

\begin{itemize}
\item $d = 2$, ${\mathfrak{R}\mathfrak{L}}^G(X) = (1, 2, 3, 4, 5, 6 \mid 8, 9, 10 \mid 12 \mid 16, 17, 18 \mid 20 \mid 24 \mid 32),$
\item $d = 3$, \begin{align*}{\mathfrak{R}\mathfrak{L}}^G(X) = &(1, 2, 3, 4, 5, 6, 7 \mid 9, 10, 11, 12, 13 \mid 15 \mid 18, 19 \mid 21 \mid 27, 28, \\& 29, 30, 31 \mid 33 \mid 36, 37 \mid 39 \mid 45 \mid 54, 55 \mid 57 \mid 63 \mid 81, 82, \\& 83, 84, 85 \mid 87 \mid 90, 91 \mid 93 \mid 99 \mid 108, 109 \mid 111 \mid 117 \mid 135 \\&\mid 162, 163 \mid 165 \mid 171 \mid 189 \mid 243).\end{align*}
\end{itemize}
\end{example}

Note when $d = 2$, the gaps in these sequences are similar to those observed in \cite[pg.~193]{eisenbud_green_harris1}. There the authors consider the maximal possible dimensions of quotients $k[x_0,\ldots,x_{n-1}]/I$ where $I$ is a homogeneous ideal generated by $r$ linearly independent quadrics. Because lower degree polynomials are not included as generators, their sequences are shorter. This difference becomes more pronounced if one considers the same numbers for ideals generated by linearly independent degree $d$ homogeneous polynomials rather than allowing lower degree generators as we do here.

When $n = 2$, the pattern determining ${\mathfrak{R}\mathfrak{L}}^G(v_d^{(n)}(\PP^n))$ is straightforward and yields a particularly attractive incarnation of Conjecture \ref{veroneseconjecture}. It is the continuation of the patterns observed in the above sequences for $n = 2$.

\begin{conjecture}
Fix $d>0$ and set $N = N^{(2)}_d$. Consider the sequence $(a_1,\ldots,a_{N - 3})$ with terms (organized first to last)
\begin{align*}
&d - 1\\
&1\\
&d - 2\\
&1, 1\\
&\ldots\\
& 4\\
&1,\ldots,1, \text{ (repeated } d - 4 \text{ times)}\\
&3\\
&1,\ldots,1, \text{ (repeated } d - 3 \text{ times)}\\
&2\\
&1,\ldots,1, \text{ (repeated } 3d - 3 \text{ times)}
\end{align*}

Let $(b_1,\ldots,b_{N - 2})$ be the sequence defined by $$b_j = d^n - \sum_{i = 1}^{j-1} a_{i}.$$ Then the maximal number of points that could be contained in the intersection of $r$ linearly independent degree $d$ curves is $b_{r - 1}$, given that the intersection is finite and reduced. 
\end{conjecture}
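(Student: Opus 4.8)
The plan is to split the statement into two layers. The first, purely combinatorial layer asserts that the sequence $(b_1,\ldots,b_{N-2})$ built from the displayed differences $(a_i)$ equals ${\mathfrak{R}\mathfrak{L}}^G(v_d^{(2)}(\PP^2))$; this is unconditional and follows from the explicit description of the greedy reducible indices already established in this section. The second layer asserts that $b_{r-1}$ is the \emph{genuine} maximal number of points over all finite reduced intersections of $r$ independent degree $d$ curves, i.e. that ${\mathfrak{R}\mathfrak{L}}^G(v_d^{(2)}(\PP^2))=\mathfrak{L}(v_d^{(2)}(\PP^2))$; this is exactly Conjecture \ref{veroneseconjecture} specialized to $n=2$ and follows from the EGH conjecture via the proposition of this section. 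I would devote the proof to the first layer and invoke Conjecture \ref{veroneseconjecture} for the second.

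For the combinatorial layer, recall that by the reduction carried out here (Lemma \ref{reductionofmonideal} and the ensuing theorem) ${\mathfrak{R}\mathfrak{L}}^G(v_d^{(2)}(\PP^2))$ is read off from the successive values $\dim_k k[x_0,x_1]/I_j$, where $I_j$ is $(x_0^d,x_1^d)$ together with the first $N-3-j$ monomials of the lexicographic sequence of degree $d$ monomials in $k[x_0,x_1,x_2]$, excluding pure powers, dehomogenized with respect to $x_2$. First I would write this dehomogenized sequence explicitly. Grouping by the exponent $k$ of $x_0$, the monomials added fall into blocks for $k=d-1,d-2,\ldots,1,0$; inside block $k$ they appear as $x_0^k x_1^{d-k},x_0^k x_1^{d-k-1},\ldots,x_0^k$ (the $x_1$-exponent descending from $d-k$ to $0$), except that in block $k=0$ the excluded pure powers $x_1^d$ and $1$ truncate the block to $x_1^{d-1},\ldots,x_1$.

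The heart of the argument is then a staircase bookkeeping in the $d\times d$ grid of standard monomials of $(x_0^d,x_1^d)$: adding $x_0^k x_1^b$ removes exactly the standard cells $(i,j)$ with $i\ge k$ and $j\ge b$. Processing blocks in decreasing $k$ and, within a block, in decreasing $b$, an induction shows that for $1\le k\le d-1$ the surviving standard set just before block $k$ consists of all cells with $x_0$-exponent $\le k$, so that the first monomial $x_0^k x_1^{d-k}$ removes precisely the $k$ surviving cells $(k,j)$ with $d-k\le j\le d-1$, while each later monomial of the block removes the single new cell in column $k$ and block $0$ removes one cell per step. Hence block $k$ ($1\le k\le d-1$) contributes the run $(k,1,\ldots,1)$ with $d-k$ trailing ones, and block $0$ contributes $d-1$ ones. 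Concatenating reproduces the display $d-1,\,1,\,d-2,\,1,1,\ldots$; the only delicate point is the final big term $2$, after which the $d-2$ trailing ones of block $2$, the $d$ ones of block $1$, and the $d-1$ ones of block $0$ coalesce into a single run of $(d-2)+d+(d-1)=3d-3$ ones, matching the last line. Since $\sum_{k=2}^{d-1}k+\tfrac{d(d+1)}{2}=d^2-1$ and there are $N-3$ terms, the values $b_j=d^2-\sum_{i<j}a_i$ run from $d^2$ down to $1$, and the index shift placing the count for $r$ curves at $b_{r-1}$ is the one recorded in the theorem of this section (using $r-2$ additions past the base complete intersection).

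The genuinely hard part, and the reason the statement is only a conjecture, is the second layer: that the reducible configurations actually attain the global maximum over \emph{all} finite reduced intersections, which is Conjecture \ref{veroneseconjecture} for $n=2$ and rests on EGH. By contrast the combinatorial layer is an elementary staircase induction, and within it the only thing requiring care is maintaining the precise block-by-block description of the surviving cells and recognizing the coalescence into the run of $3d-3$; matching the assembled runs to the display is then routine arithmetic. It is worth remarking that if EGH is known in the relevant two-variable case, the second layer becomes unconditional and the full statement is a theorem; otherwise it remains exactly the EGH-dependent content isolated earlier.
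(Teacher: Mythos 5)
This statement is left as a conjecture in the paper, with no proof given beyond the remark that it is an implication of the EGH conjecture; your proposal correctly identifies that split and is consistent with the paper's (implicit) reasoning. Your unconditional combinatorial layer --- the staircase induction showing that the dehomogenized lex sequence removes cells in blocks of sizes $(k,1,\ldots,1)$ with $d-k$ trailing ones for $k=d-1,\ldots,1$, followed by $d-1$ ones from the $x_0^0$ block, so that the runs after the term $2$ coalesce into $(d-2)+d+(d-1)=3d-3$ ones --- is correct and actually supplies a verification that the paper waves off as ``straightforward'' pattern continuation from Example \ref{exampleseqsveronese}. The deferral of the second layer to Conjecture \ref{veroneseconjecture} (hence to EGH via the proposition of Section 4) is exactly right, and you are correct that only that layer is genuinely conjectural.
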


This is therefore also an implication of the EGH-conjecture, so any counterexample to it would also suffice to disprove the EGH-conjecture.


\section{Segre varieties}

In this section we will devise a method for computing the greedy reducible secant indices for the images of Segre embeddings. Let $n,m > 0$ and throughout denote by $\sigma_{n,m}: \PP^n\times\PP^m\hookrightarrow \PP^N$ the Segre embedding, where $N = (n + 1)(m+1) - 1$, and let $X = \sigma_{n,m}(\PP^n\times\PP^m)\subseteq \PP^N$. The degree of $X$ is $\binom{n + m}{n}$, so this is the top term of $\mathfrak{L}(X)$.

\begin{example}
When $m = 1$ is fixed and $n$ is allowed to vary (or vice versa), $X$ has dimension $n + 1$ inside $\PP^N$ where $N = 2(n + 1) - 1 = 2n + 1$. Thus $X$ has codimension $n$ and its degree is $\binom{n + 1}{n} = n + 1$. Therefore $X$ is a minimal degree subvariety of $\PP^N$ and so by Theorem \ref{genstrict} its sequence of secant indices is $$\mathfrak{L}(X) = (1,2,\ldots, n + 1).$$
\end{example}

Therefore the question of computing the sequences of secant indices only becomes interesting for $n,m > 1$. The first such example is $\PP^2\times\PP^2$ which is a codimension $4$ subvariety of $\PP^8$ of degree $6$. So its sequence of secant indices must contain exactly one gap of size $2$. In this section we will describe a method which can be used to compute ${\mathfrak{R}\mathfrak{L}}^G(X)$. These computations show that $${\mathfrak{R}\mathfrak{L}}^G(\sigma_{2,2}(\PP^2\times\PP^2)) = (1,2,3,4,6),$$ suggesting that the gap in the true sequence $\mathfrak{L}(\sigma_{2,2}(\PP^2\times\PP^2))$ occurs between the final two terms, like for the example of the degree $3$ Veronese embedding of $\PP^2$ into $\PP^9$. In fact, this example is simple enough that we can treat it by Theorem \ref{genstrict} and Proposition \ref{classificationextremal}, as $\sigma_{2,2}(\PP^2\times\PP^2)$ is smooth and nondegenerate, and is not a rational scroll.

\begin{proposition}
$$\mathfrak{L}(\sigma_{2,2}(\PP^2\times\PP^2)) = (1,2,3,4,6).$$
\end{proposition}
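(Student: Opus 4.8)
The plan is to reduce the statement to a short counting argument constrained by the two structural results already established, so that only one arithmetic possibility survives. First I would record the numerical invariants of $X = \sigma_{2,2}(\PP^2\times\PP^2)$: it is smooth and nondegenerate in $\PP^8$, of dimension $4$, codimension $4$, and degree $\binom{4}{2} = 6$. By Theorem \ref{genstrict} the sequence $\mathfrak{L}(X)$ is strictly increasing, has length $\codim(X) + 1 = 5$, begins at $\mathfrak{L}_0(X) = 1$, and ends at $\mathfrak{L}_4(X) = \deg(X) = 6$. The four consecutive differences $\mathfrak{L}_{j+1}(X) - \mathfrak{L}_j(X)$ are therefore positive integers summing to $6 - 1 = 5$, and the only such composition is three $1$'s together with a single $2$. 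Equivalently, $\mathfrak{L}(X)$ has exactly one gap, of size $2$, and the whole problem is to locate it among the four steps.

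The second step is to pin down the gap using Corollary \ref{classificationextremal}. That result applies to any smooth nondegenerate variety of codimension $\geq 2$ which is not a rational scroll, the Veronese surface in $\PP^5$, or a projected Veronese surface in $\PP^4$, and it guarantees $\mathfrak{L}_{\codim(X)}(X) - \mathfrak{L}_{\codim(X)-1}(X) \geq 2$. I would verify that $X$ lies outside all three exceptional classes. The last two are surfaces, whereas $X$ is a fourfold, so only the rational scroll case requires thought. Here one may argue that $X$ admits no surjection onto a smooth curve: its Mori cone is spanned by the line classes of the two factors and its only elementary contractions are the two projections to $\PP^2$, so it is not a projective bundle over a curve. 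Alternatively, a rational normal scroll has minimal degree $\codim(X) + 1 = 5$ while $\deg(X) = 6$, so $X$ is not one; and as a purely topological check, $b_4(X) = 3$ whereas any $\PP^3$-bundle over a curve has $b_4 = 2$. Any one of these suffices, and Corollary \ref{classificationextremal} then forces $\mathfrak{L}_4(X) - \mathfrak{L}_3(X) \geq 2$.

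Combining the two steps finishes the argument: the unique size-$2$ gap must be the final step, so $\mathfrak{L}_4(X) - \mathfrak{L}_3(X) = 2$ and every earlier difference equals $1$. Reading off the values in order gives $\mathfrak{L}_0(X) = 1$, $\mathfrak{L}_1(X) = 2$, $\mathfrak{L}_2(X) = 3$, $\mathfrak{L}_3(X) = 4$, and $\mathfrak{L}_4(X) = 6$, i.e. $\mathfrak{L}(X) = (1,2,3,4,6)$. The only place needing genuine verification, and the step I expect to be the main (if minor) obstacle, is confirming that $X$ is not a rational scroll; once that is settled, the conclusion is forced entirely by the arithmetic of a strictly increasing length-$5$ sequence from $1$ to $6$ carrying a mandatory jump at its end, everything else being immediate from Theorem \ref{genstrict}.
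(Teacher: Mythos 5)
Your proposal is correct and follows essentially the same route as the paper, which likewise deduces the result by combining Theorem \ref{genstrict} (forcing a strictly increasing length-$5$ sequence from $1$ to $6$, hence a unique gap of size $2$) with Corollary \ref{classificationextremal} (placing that gap at the final step, since $\sigma_{2,2}(\PP^2\times\PP^2)$ is smooth, nondegenerate, and not a rational scroll or one of the Veronese exceptions). Your extra verifications that $X$ is not a projective bundle over a curve only make explicit a point the paper asserts without justification.
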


Proposition \ref{classificationextremal} could also be used to prove $\mathfrak{L}(v_3^{(2)}(\PP^2)) = (1,2,3,4,5,6,7,9)$ in place of the classical Cayley-Bacharach theorem. In this sense, this classification result, and results such as that of Noma \cite{noma1}, fulfill similar roles to the Cayley-Bacharach theorem but in the case of arbitrary smooth nondegenerate projective subvarieties. However, they only concern extremal secant subspaces and do not provide lower bounds for the secant indices, so do not give us a means to answer whether $$\mathfrak{L}(X) = {\mathfrak{R}\mathfrak{L}}^G(X)$$ for the more complicated instances of Segre and Veronese varieties.

Because of this, we focus here instead on a method of computing ${\mathfrak{R}\mathfrak{L}}^G(X)$ to obtain conjectural values for $\mathfrak{L}(X)$, for $X = \sigma_{n,m}(\PP^n\times\PP^m)$. To derive this method, note that if we choose coordinates, $\PP^n(x_0:\ldots : x_n)$, $\PP^m(y_0:\ldots :y_m)$, $\PP^N(z_0:\ldots : z_N)$ then for any hyperplane $H = V(a_0 z_0 + \ldots + a_N z_N)$ of $\PP^N$, we see that $X\cap H$ is the zero locus of the polynomial $a_0 x_0 y_0 + a_1 x_0 y_1 + \ldots + a_N x_n y_m$. This quadric can split into at most two factors.

On the other hand, we can choose any collection of $n+m$ hyperplanes $$H_1,\ldots, H_{n+m}\subseteq \PP^n$$ with the property that any subset of $n$ of these hyperplanes has only a single point in common, and any $n+1$ do not have any point in common. Likewise we can choose an analogous collection of hyperplanes $H^\prime_1,\ldots, H^\prime_{n+m}\subseteq \PP^m$ with the property that any subset of $m$ of those hyperplanes meet at a single point and no $m+1$ have a point in common. Each subscheme $$T_j := H_j\cup H^\prime_j\subseteq \PP^n\times\PP^m$$ is then a reducible and reduced hyperplane section of $\PP^n\times\PP^m$ via the Segre embedding, and we have $\bigcap_{j = 1}^{n+m}T_j$ is finite and reduced, realizing the degree $\binom{n+m}{n}$ of $\PP^n\times\PP^m$ in cardinality. Any $n+m$ hyperplanes of $\PP^N$ meeting at a dimension $N - n - m$ linear subvariety that meets $X$ in a finite and reduced collection of points will be of this form.

As in Section 4, there is symmetry here in the sense that the points of the intersection $T_1\cap \ldots \cap T_{n+m}$ are exactly those of the form $$(H_{a_1}\cap\ldots\cap H_{a_{n}})\times (H^\prime_{b_1}\cap \ldots\cap H^\prime_{b_m})$$ where $\{1,2,\ldots,n+m\} = \{a_1,\ldots,a_n\}\cup \{b_1,\ldots,b_m\}$. Therefore, we can strip the essence of the computation of ${\mathfrak{R}\mathfrak{L}}^G(X)$ from the context of the Segre embedding and find it equivalent to the following problem.

\begin{combprob}
\label{combprob2}
Let $Y$ be the set of all tuples $(A,B)$ where $A$ is a set of size $n$ and $B$ a set of size $m$ so that $A\cup B = \{1,2,\ldots,n+m\}$. Let $S$ be the set of all sequences of length $nm$ consisting of distinct tuples $(a,b)$ where $a,b\in \{1,2,\ldots,n + m\}$ and $a\neq b$. Given a tuple $(a,b)$, we say we are \emph{cutting $Y$ by $(a,b)$} if we replace $Y$ with the subset of elements $(A,B)$ of $Y$ for which either $a\in A$ or $b\in B$. For each sequence $((a_1,b_1),\ldots, (a_{nm}, b_{nm}))$, form an integer sequence $(c_0,\ldots,c_{nm})$ where $c_j$ is the cardinality of the set obtained by cutting $Y$ by each $(a_{i}, b_{i})$ for $i = 1, \ldots, nm - j$. Note $c_{nm} = |Y|$. The problem is then to compute the maximal (with respect to the total order $\leq$ of Section 3) possible integer sequence arising in this manner that is also strictly increasing.
\end{combprob}

The combinatorial problem gives us a clearer way to compute the reducibility of $X$ and find $\mu(X)$.

\begin{proposition}
Let $X = \sigma_{n,m}(\PP^n\times\PP^m)$. Then the reducibility of $X$ is $2$, and $\mu(X) = 2\dim(X)$.
\end{proposition}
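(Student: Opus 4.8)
The plan is to bound the reducibility from above by the possible factorization type of a hyperplane section, then exhibit a single explicit sequence of hyperplanes that simultaneously witnesses $2$-reducibility and minimizes the component count, and finally read off $\mu(X)$ from the bookkeeping of irreducible components in that sequence.

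\emph{Upper bound.} Every hyperplane section of $X = \sigma_{n,m}(\PP^n\times\PP^m)$ is the zero locus of a bilinear form $\sum a_{ij}x_iy_j$, a bihomogeneous polynomial of bidegree $(1,1)$. Such a form is either irreducible or factors as $\ell(x)\ell'(y)$ with $\ell,\ell'$ linear (exactly when $(a_{ij})$ has rank $1$); in either case $X\cap H$ has at most two irreducible components. Hence no reduced hyperplane section of $X$ has more than two components, so $X$ is not $p$-reducible for any $p>2$ and the reducibility is at most $2$.

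\emph{Construction.} For the lower bound and for $\mu$ I would use the configuration set up before Combinatorial Problem \ref{combprob2}. Choose $H_1,\dots,H_{n+m}\subseteq\PP^n$ and $H_1',\dots,H_{n+m}'\subseteq\PP^m$ in general position (any $n$ of the $H_i$ meet in one point and no $n+1$ share a point, dually for the $H_j'$), and let $T_j$ be the hyperplane of $\PP^N$ with $T_j\cap X = H_j\cup H_j'$, the rank-$1$ section $V(\ell_j(x)\ell_j'(y))$. Each $T_j\cap X$ has exactly two components, and $T_1\cap\dots\cap T_{n+m}\cap X$ is the set $Z$ of the $\binom{n+m}{n}$ points $p_A\times q_B$ indexed by partitions $\{1,\dots,n+m\}=A\sqcup B$ with $|A|=n$. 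Since $|Z|=\binom{n+m}{n}=\deg X$, this intersection is reduced and zero-dimensional, so these first $n+m$ hyperplanes meet the requirements of $2$-reducibility exactly, with $L:=T_1\cap\dots\cap T_{n+m}$ of dimension $\codim X = nm$.

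\emph{Descent, and the main obstacle.} It remains to continue with hyperplanes that strictly lower the point count at every one of the remaining $nm$ steps while cutting $L$ down to a single point, using only rank-$1$ sections whose two components already occur among the $2(n+m)$ above. Via the dictionary of Combinatorial Problem \ref{combprob2}, the section $H_a\cup H_b'$ keeps exactly the points $p_A\times q_B$ with $a\in A$ or $b\in B$. I would fix a target partition $(A_0,B_0)$ and use precisely the $nm$ cuts $(a,b)$ with $a\in A_0$, $b\in B_0$ (automatically $a\neq b$). These isolate the target, since any surviving $(A,B)\neq(A_0,B_0)$ has some $a\in A_0\cap B$ and some $b\in B_0\cap A$, and the cut $(a,b)$ then removes it. The delicate point -- the main obstacle -- is that a naive greedy removal might reach a single point of $Z$ while the ambient linear subvariety still has positive dimension, stalling the required strict decrease. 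This is ruled out by the following observation: the chosen cuts $(a,b)$ are in bijection with the ``defect one'' survivors $A=(A_0\setminus\{a\})\cup\{b\}$, $B=(B_0\setminus\{b\})\cup\{a\}$, and such a survivor is removed by no cut other than its own $(a,b)$. Hence, in any order, each cut removes at least its own defect-one survivor, so the count drops by at least one each step; as such a hyperplane then cannot contain the current linear span, the linear dimension also drops by one, and after exactly $nm$ steps we reach the single point $p_{A_0}\times q_{B_0}$. This completes a sequence satisfying the conditions of $2$-reducibility, so the reducibility equals $2$.

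\emph{Computing $\mu$.} The general inequality gives $\mu(X)\ge p\dim(X)=2(n+m)$. In the sequence above every hyperplane section is either a $T_j$ or a rank-$1$ cut $H_a\cup H_b'$, and all components $\{\ell_i(x)=0\}\times\PP^m$ and $\PP^n\times\{\ell_j'(y)=0\}$ that appear are among the $2(n+m)$ divisors coming from $T_1,\dots,T_{n+m}$; each of these meets $Z$ (for instance $\{\ell_i=0\}\times\PP^m$ contains every $p_A\times q_B$ with $i\in A$). Thus $\bigcup_j(H_j\cap X)$ has exactly $2(n+m)$ components meeting $Z$, so this sequence lies in $\mathfrak{H}(X)$ and forces $\mu(X)\le 2(n+m)$. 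Therefore $\mu(X)=2(n+m)=2\dim(X)$.
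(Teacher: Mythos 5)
Your proposal is correct and follows essentially the same route as the paper: bound the reducibility by $2$ via the rank of the bilinear form defining a hyperplane section, set up the $n+m$ rank-one sections $T_j$ cutting out the $\binom{n+m}{n}$ points, and then descend using the $nm$ swap-cuts $(a,b)$, each of which removes exactly its own ``defect-one'' partition and no other, while reusing only the $2(n+m)$ components already present. Your write-up is somewhat more careful than the paper's on the descent step (verifying that the non-target points are all eventually removed and that the ambient linear dimension drops at each stage), but the construction and the key observation are identical.
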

\begin{proof}
Because of our observations above, we see that the most a hyperplane section of $X$ can split is into two components, so the reducibility of $X$ is at most $2$. All that must be done is exhibit a sequence of hyperplane sections satisfying the conditions of $2$-reducibility that also has the minimum total number of irreducible components.

To help simplify the notation, we use the notation of Combinatorial problem \ref{combprob2}. Let $p = (A,B)$ be any point of $Y$, and write $A = \{a_1,\ldots,a_n\}$, $B = \{b_1,\ldots,b_m\}$. Then consider the set $T$ consisting of the $nm$ points obtained from swapping one element of $A$ with one element of $B$. Use $p_{ij}$ to denote the point where $a_i$ was swapped with $b_j$. For each such $i,j$, cutting by the tuple $H_{i,j} = (a_i, b_j)$ removes $p_{ij}$ from $Y$ but does not remove any of the other elements of $T$.

Each $H_{i,j}$ corresponds to a hyperplane section of $X$, and the sequence of hyperplanes consisting of first the $n+m$ cutting out $Y$ and then of the $$H_{1,1}, H_{2,1},\ldots,H_{n,m},$$ taken in any order, is a sequence of hyperplane sections satisfying the conditions of $2$-reducibility, with $2\dim(X)$ total irreducible components.
\end{proof}

Thus altogether we have the following.

\begin{theorem}
For a given $n,m > 1$, the answer to Combinatorial problem \ref{combprob2} is the sequence ${\mathfrak{R}\mathfrak{L}}^G(X)$.
\end{theorem}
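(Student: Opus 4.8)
The plan is to exhibit an explicit dictionary between the hyperplane sequences in $\mathfrak{H}(X)$ that compute ${\mathfrak{R}\mathfrak{L}}^G(X)$ and the cutting sequences of Combinatorial problem \ref{combprob2}, under which the two recorded integer sequences agree term by term; the maxima taken in the total order $\leq$ on each side then coincide, which is exactly the assertion.

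First I would fix a normal form for a sequence $(H_1,\ldots,H_N)\in\mathfrak{H}(X)$. By the preceding proposition the reducibility of $X$ is $2$ and $\mu(X)=2\dim(X)=2(n+m)$, so the first $n+m$ sections are fully reducible, $T_j=H_j\cup H'_j$ with $H_j\subseteq\PP^n$ and $H'_j\subseteq\PP^m$, and together the $H_j,H'_j$ contribute all $2(n+m)$ components meeting the core $H_1\cap\cdots\cap H_{n+m}\cap X$. Each such component does meet the core, since for any $a$ one may choose $(A,B)$ with $a\in A$, giving a core point on $H_a$, and symmetrically for the $H'_b$. Using the $\mathrm{PGL}_{n+1}\times\mathrm{PGL}_{m+1}$ action on $X$, which extends to a linear automorphism of $\PP^N$, I may move the collections $\{H_j\}$ and $\{H'_j\}$ into the standard general position of the text and identify the $\binom{n+m}{n}$ core points with the set $Y$, the point $p_{(A,B)}$ being the Segre image of $(\bigcap_{a\in A}H_a)\times(\bigcap_{b\in B}H'_b)$.

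The heart of the argument is to show that every later section $H_{n+m+k}$ that removes a point must be of the form $H_a\cup H'_b$ with $a,b\in\{1,\ldots,n+m\}$ and $a\neq b$, and that it acts exactly as cutting $Y$ by $(a,b)$. A hyperplane section of $X$ is a $(1,1)$-form, hence either irreducible or a union of one hyperplane from each factor, so its components are of type $H_a$ or $H'_b$. Since the $\mu(X)$ condition forbids any new component meeting the core, and a point-removing section necessarily meets the core, I must rule out genuinely new components. The key geometric input is that the core points $Q_B=\bigcap_{b\in B}H'_b$ already span $\PP^m$: the $m+1$ vertices cut out by any $m+1$ of the $H'_j$ form a projective frame, so no hyperplane of $\PP^m$ avoids all core points, and symmetrically in $\PP^n$. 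Consequently any new factor-component would meet the core, so both components of $H_{n+m+k}$ lie among $H_1,\ldots,H_{n+m},H'_1,\ldots,H'_{n+m}$. The incidence $p_{(A,B)}\in H_a\cup H'_b\iff a\in A$ or $b\in B$, which follows from the fact that $\bigcap_{a\in A}H_a$ lies on $H_{a'}$ exactly when $a'\in A$, identifies the removed points with those for which $a\notin A$ and $b\notin B$, i.e. the cut by $(a,b)$; and $a=b$ removes nothing because $A\sqcup B=\{1,\ldots,n+m\}$, forcing $a\neq b$.

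Finally I would note that linear independence is automatic: any hyperplane stripping at least one point from the current finite intersection cannot contain the current linear span, hence is independent of $H_1,\ldots,H_{n+m+k-1}$, just as in the Veronese discussion. Thus strictly decreasing cardinality corresponds precisely to the strictly increasing requirement of Combinatorial problem \ref{combprob2}, so each admissible cutting sequence is realized by an honest chain in $\mathfrak{H}(X)$ and conversely. Under the dictionary $|X\cap(H_1\cap\cdots\cap H_{n+m+k})|=c_{nm-k}$, the greedy tuple $(|X\cap(H_1\cap\cdots\cap H_N)|,\ldots,|X\cap(H_1\cap\cdots\cap H_{n+m})|)$ becomes $(c_0,\ldots,c_{nm})$, and taking the maximum in the total order $\leq$ on each side gives the same sequence. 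I expect the main obstacle to be the third step, specifically ruling out that an extraneous component, the analogue of the $V(x_n)$ padding used for Veronese varieties, could effect a cut outside the allowed family; it is precisely the spanning property of the core points that forbids this, and verifying it cleanly, together with the $\mathrm{PGL}_{n+1}\times\mathrm{PGL}_{m+1}$ reduction to the standard configuration, is where the real work lies.
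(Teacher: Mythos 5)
Your dictionary between hyperplane sequences in $\mathfrak{H}(X)$ and cutting sequences is the same one the paper sets up in the discussion preceding the theorem (the paper in fact states the theorem without a formal proof, so your write-up is an attempt to supply the omitted argument). The identification of the core points with the tuples $(A,B)$, the incidence $p_{(A,B)}\in (H_a\times\PP^m)\cup(\PP^n\times H'_b)\iff a\in A$ or $b\in B$, the observation that $a=b$ removes nothing, and the automatic linear independence are all correct and match the paper's reasoning.

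However, your key step --- ruling out ``genuinely new'' components in the later sections --- contains a genuine error. From the fact that the core points $Q_B$ span $\PP^m$ you conclude that ``no hyperplane of $\PP^m$ avoids all core points.'' Spanning only says that no hyperplane \emph{contains} all of the $Q_B$; a \emph{general} hyperplane of $\PP^m$ misses every point of this finite set. (Already for $n=m=2$ the six points $Q_B$ are the pairwise intersections of four general lines, and a general fifth line avoids all six.) Consequently a later section of the form $(H_a\times\PP^m)\cup(\PP^n\times H''')$, with $H'''$ a new hyperplane missing all the $Q_B$, is perfectly admissible: the new component does not meet the core, so the $\mu(X)=2(n+m)$ condition is untouched, and the section strictly decreases the count by keeping exactly $\{(A,B): a\in A\}$. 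This is precisely the analogue of the $V(x_n)$ padding from the Veronese section that you flag as the main obstacle, and your argument does not forbid it --- it asserts the opposite of what the geometry gives. These ``degenerate cuts'' (keep only $a\in A$, or only $b\in B$) are operations not present in Combinatorial problem~\ref{combprob2}, so your bijection between admissible hyperplane chains and cutting sequences fails. The theorem can still be rescued: a degenerate cut keeping $\{a\in A\}$ retains a subset of what the cut $(a,b)$ retains for any $b\neq a$, so replacing it yields a cardinality sequence dominating the original term-wise, hence dominating it in the total order $\leq$; therefore the degenerate operations never increase the maximum. But this domination argument is a necessary additional step, and as written your proof both omits it and rests on a false geometric claim in its place.
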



It is straightforward to write an algorithm that solves this problem for a given $n, m > 1$. We have implemented such an algorithm using the SageMath computer algebra system \cite{sage1}, and have used it to compute the greedy sequence of reducible secant indices for several values of $n,m$. It would be interesting to know if there is an analog of Lemma \ref{consequencecl} that would work in this context to show that $${\mathfrak{R}\mathfrak{L}}^G(X) = \mathfrak{R}\mathfrak{L}(X).$$ Verifying this without additional theoretical support requires a brute-force check of every possible sequence of elements of the form $(a,b)$ using the notation of Combinatorial problem \ref{combprob2}, and this becomes impractical even for small $n,m$. Similarly to Example \ref{exampleseqsveronese}, in the following example we use vertical bars to indicate gaps of size $\geq 2$ in the sequences.

\begin{example}
$m = n$
\begin{itemize}
\item $n = 2, m = 2$, ${\mathfrak{R}\mathfrak{L}}^G(X) = (1,2,3,4\mid6)$,
\item $n = 3, m = 3$, ${\mathfrak{R}\mathfrak{L}}^G(X) = (1, 2, 3, 4 \mid 6, 7 \mid 10, 11 \mid 14 \mid 20)$,
\item $n = 4, m = 4$, ${\mathfrak{R}\mathfrak{L}}^G(X) = (1, 2, 3, 4, 6, 7\mid 10, 11\mid 14 \mid 20, 21 \mid 25 \mid 35, 36 \mid 40 \mid 50 \mid70)$,
\end{itemize}

$m = n - 1$
\begin{itemize}
\item $n = 3, m = 2$, ${\mathfrak{R}\mathfrak{L}}^G(X) = (1, 2, 3, 4 \mid 6, 7 \mid10)$,
\item $n = 4, m = 3$, ${\mathfrak{R}\mathfrak{L}}^G(X) = (1, 2, 3, 4 \mid 6, 7 \mid10, 11 \mid14 \mid20, 21 \mid25 \mid35)$,
\end{itemize}

$m = n - 2$
\begin{itemize}
\item $n = 4, m = 2$, ${\mathfrak{R}\mathfrak{L}}^G(X) = (1, 2, 3, 4 \mid 6, 7 \mid 10, 11 \mid 15)$,
\item $n = 5, m = 3$, ${\mathfrak{R}\mathfrak{L}}^G(X) = (1, 2, 3, 4 \mid 6, 7 \mid 10, 11\mid 14 \mid 20, 21 \mid 25 \mid 35, 36 \mid 41 \mid 56)$,
\end{itemize}

$m = n - 3$
\begin{itemize}
\item $n = 5, m = 2$, ${\mathfrak{R}\mathfrak{L}}^G(X) = (1, 2, 3, 4 \mid 6, 7 \mid 10, 11 \mid 15, 16\mid 21)$.
\end{itemize}
\end{example}

It seems reasonable to expect that for this problem, like for that of the Veronese varieties, the greedy sequence is equal to the sequence of secant indices. We leave this as a conjecture.

\begin{conjecture}
For any $n,m > 1$, $X = \sigma_{n,m}(\PP^n\times\PP^m)$, we have $${\mathfrak{R}\mathfrak{L}}^G(X) = \mathfrak{L}(X).$$
\end{conjecture}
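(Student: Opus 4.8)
The plan is to follow the template established for Veronese varieties: obtain the lower bound for free from the greedy reducible construction, and reduce the reverse inequality to a combinatorial extremality statement playing the role that the Clements--Lindstr\"om consequence (Lemma \ref{consequencecl}) played in the single-graded Veronese computation. Since ${\mathfrak{R}\mathfrak{L}}^G(X)\preceq\mathfrak{L}(X)$ holds in general, it suffices to prove $\mathfrak{L}_j(X)\leq {\mathfrak{R}\mathfrak{L}}^G_j(X)$ for every $j$, i.e.\ that no dimension $j$ linear subvariety $L$ with $X\cap L$ finite and reduced can meet $X$ in more points than the greedy reducible construction attains. The first step is to recast the question concretely: a linear subvariety $L$ of dimension $j$ is cut out by $r=N-j$ linearly independent linear forms in the coordinates $z_{ij}$, and pulling back along the Segre embedding each such form becomes a bilinear form $\sum a_{ij}x_iy_j$ on $\PP^n\times\PP^m$. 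Hence $X\cap L$ is identified with the common zero locus in $\PP^n\times\PP^m$ of $r$ linearly independent bilinear forms, and since this locus is finite and reduced we have $|X\cap L|=\deg(X\cap L)$, which we compute as the degree of the zero-dimensional subscheme defined by these forms in the Segre product ring.

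The second step is a Gr\"obner degeneration. Fixing a term order on the bigraded ring $k[x_0,\ldots,x_n,y_0,\ldots,y_m]$, I would degenerate the ideal generated by the $r$ bilinear forms to its initial ideal in a flat family. Flatness preserves the Hilbert function and hence the scheme-theoretic degree, so the length of the limit zero-dimensional scheme equals $\deg(X\cap L)=|X\cap L|$, while the support of the limit is a union of coordinate-subspace intersections matching exactly the configurations of Combinatorial problem \ref{combprob2}. This reduces the geometric question to the purely combinatorial one of bounding, over all monomial (coordinate) configurations, the number of surviving points by the greedy value.

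The third and decisive step is to prove that among all such monomial configurations the greedy reducible construction is extremal, that is, that it solves Combinatorial problem \ref{combprob2} with respect to the partial order $\preceq$ and not merely with respect to the total order $\leq$. This is where I expect the real difficulty to lie, and it is precisely the bigraded analog of Lemma \ref{consequencecl} that the text flags as unknown. I would attempt a compression argument in the spirit of Macaulay and Clements--Lindstr\"om: given an arbitrary configuration realizing $\mathfrak{L}_j(X)$, repeatedly replace each cutting tuple $(a,b)$ by a lexicographically smaller one, showing that each replacement does not decrease the surviving cardinality, until the configuration coincides with the greedy one.

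The main obstacle is that, unlike the single-graded setting underlying the Veronese computation, there is no established Clements--Lindstr\"om theorem for the Segre product $k[x]\#k[y]$, and the cutting operation of Problem \ref{combprob2} does not obviously admit such a compression: cutting by $(a,b)$ removes the tuples $(A,B)$ with $a\notin A$ and $b\notin B$ simultaneously, and these forbidden sets interact across the two factors in a way that ordinary one-variable shifting does not control. Resolving the conjecture therefore hinges either on establishing a genuine multigraded Clements--Lindstr\"om inequality for bidegree $(1,1)$ forms, or on a direct combinatorial proof that the greedy cutting order in Problem \ref{combprob2} is $\preceq$-maximal. I regard the latter as the more promising point of entry, since it isolates the essential difficulty in a finite, checkable combinatorial model and decouples it from the surrounding algebraic geometry.
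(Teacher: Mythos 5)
This statement is left as an open conjecture in the paper; there is no proof to compare against, and your proposal does not close the gap either --- it is a program whose decisive step you yourself flag as unresolved. To be precise: the inequality ${\mathfrak{R}\mathfrak{L}}^G(X)\preceq\mathfrak{L}(X)$ is indeed free, and the missing reverse inequality is exactly what the paper identifies as the obstruction (``It would be interesting to know if there is an analog of Lemma \ref{consequencecl} that would work in this context''). So your diagnosis of where the difficulty lives --- a bigraded Clements--Lindstr\"om-type extremality statement, or equivalently a proof that the greedy cutting order in Combinatorial problem \ref{combprob2} is maximal for $\preceq$ and not just for the total order $\leq$ --- agrees with the paper's own assessment. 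But naming the obstruction is not overcoming it, and the proposal contains no argument for that step.

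Two further points in your reduction deserve scrutiny even as a program. First, in the Gr\"obner degeneration step, the initial terms of $r$ linearly independent bilinear forms need not be $r$ distinct monomials (e.g.\ $x_0y_0+x_1y_1$ and $x_0y_0+x_2y_2$ share the initial term $x_0y_0$), and $\mathrm{in}(I)$ is in general strictly larger than the ideal of initial terms of the given generators. Controlling how many degree-one elements $\mathrm{in}(I)$ must contain is precisely a Macaulay/EGH-type growth statement, so the reduction to a monomial model already presupposes a piece of the hard combinatorics rather than preceding it. Second, the monomial configurations produced by such a degeneration live in the toric coordinate ring of the Segre variety and are a strictly larger class than the configurations of Combinatorial problem \ref{combprob2}, which only index points of a fixed product grid cut by tuples $(a,b)$; so the extremality you would need to prove is stronger than the $\preceq$-maximality of the greedy sequence within Problem \ref{combprob2}. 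Your instinct to isolate the finite combinatorial model is reasonable and matches the spirit of the paper's treatment of the Veronese case, but as written the proposal establishes nothing beyond what the paper already records, namely that the conjecture reduces to an unproven bigraded analog of Lemma \ref{consequencecl}.
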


While it may be difficult to write down a closed form formula for ${\mathfrak{R}\mathfrak{L}}^G(X)$, one simple computation is determining the size of the last gap in ${\mathfrak{R}\mathfrak{L}}^G(X)$. A special case of the above conjecture is then that the same gap must be present in the sequence of secant indices.

\begin{conjecture}
$$\mathfrak{L}_{nm}(X) - \mathfrak{L}_{nm - 1}(X) = \binom{n + m - 2}{n - 1}.$$ That is, the maximum value of $|X \cap L|$ is $$\binom{n+m}{n} - \binom{n + m - 2}{n - 1}$$ for a linear subvariety $L$ of dimension $\codim(X) - 1 = nm - 1$ having finite, reduced intersection with $X$.
\end{conjecture}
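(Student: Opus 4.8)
The plan is to first carry out the part that is genuinely provable — pinning down the last gap of ${\mathfrak{R}\mathfrak{L}}^G(X)$ by a direct count inside Combinatorial problem \ref{combprob2} — and then to isolate the one geometric inequality that the full equality really needs. First I would translate the whole question into the combinatorial setting, whose answer is ${\mathfrak{R}\mathfrak{L}}^G(X)$. Since $|A| + |B| = n + m = |\{1,\ldots,n+m\}|$ and $A\cup B = \{1,\ldots,n+m\}$, every element $(A,B)$ of $Y$ is an ordered partition of $\{1,\ldots,n+m\}$ into a block of size $n$ and a block of size $m$; hence $|Y| = \binom{n+m}{n} = \deg(X)$, which is the top term ${\mathfrak{R}\mathfrak{L}}^G_{nm}(X) = \mathfrak{L}_{nm}(X)$. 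The key observation is that cutting $Y$ by a single tuple $(a,b)$ with $a\neq b$ deletes precisely those $(A,B)$ with $a\notin A$ and $b\notin B$; because $(A,B)$ is a partition this says $a\in B$ and $b\in A$. Fixing $b$ in the size-$n$ block and $a$ in the size-$m$ block, the remaining $n+m-2$ indices must be distributed so the blocks reach sizes $n$ and $m$, which happens in exactly $\binom{n+m-2}{n-1}$ ways, and this count does not depend on the choice of $(a,b)$.

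Consequently every admissible first cut removes the same number $\binom{n+m-2}{n-1}$ of points, so ${\mathfrak{R}\mathfrak{L}}^G_{nm-1}(X) = \binom{n+m}{n} - \binom{n+m-2}{n-1}$ and the last gap of ${\mathfrak{R}\mathfrak{L}}^G(X)$ is exactly $\binom{n+m-2}{n-1}$ (which is $\geq 2$ whenever $n,m>1$, consistent with Corollary \ref{classificationextremal}). Feeding this into the chain ${\mathfrak{R}\mathfrak{L}}^G(X)\preceq\mathfrak{L}(X)$, together with $\mathfrak{L}_{nm}(X) = \deg(X) = {\mathfrak{R}\mathfrak{L}}^G_{nm}(X)$, gives $\mathfrak{L}_{nm-1}(X)\geq \binom{n+m}{n} - \binom{n+m-2}{n-1}$ and hence the one-sided bound $\mathfrak{L}_{nm}(X) - \mathfrak{L}_{nm-1}(X)\leq \binom{n+m-2}{n-1}$. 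This half uses nothing beyond the fully reducible configurations and should be completely routine once the count above is recorded.

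The main obstacle is the reverse inequality $\mathfrak{L}_{nm-1}(X)\leq \binom{n+m}{n} - \binom{n+m-2}{n-1}$, i.e. that no linear subvariety $L$ of dimension $nm-1$ with finite reduced intersection meets $X$ in more than this many points. Here one can no longer restrict to reducible hyperplane sections: writing a hyperplane of $\PP^N$ as the bilinear form $u^\top A v$ for an $(n+1)\times(m+1)$ matrix $A$, the section $X\cap H$ is reducible exactly when $\rank A = 1$, and the construction of ${\mathfrak{R}\mathfrak{L}}^G$ only sees these rank-one cuts, whereas $\mathfrak{L}_{nm-1}(X)$ permits $A$ of arbitrary rank. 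Thus $X\cap L$ is the reduced solution set of a system $u^\top A_k v = 0$ for $k=1,\ldots,n+m+1$, and the task becomes showing that an irreducible (rank $\geq 2$) final cut can never retain more of the $\binom{n+m}{n}$ base points than a rank-one cut does.

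I expect this last step to be the crux: it is precisely the top-gap instance of the conjecture ${\mathfrak{R}\mathfrak{L}}^G(X)=\mathfrak{L}(X)$, so a complete argument here would be genuinely new input rather than bookkeeping. I would attempt it either through a residuation / Cayley--Bacharach style argument on the reduced complete intersection $\bigcap_{j} T_j$, comparing the sections retained by a general hyperplane against those retained by a rank-one one, or through a direct enumerative bound on the reduced solutions of the bilinear system above; but neither approach is routine, and I do not see how to close this gap using only the classification results of Kwak and Noma, which bound extremal lengths but give no lower bound forcing the retained count down to $\binom{n+m}{n} - \binom{n+m-2}{n-1}$.
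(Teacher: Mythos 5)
This statement is labeled a conjecture in the paper, and the paper offers no proof of it; the only thing the paper actually establishes is the value of the last gap of ${\mathfrak{R}\mathfrak{L}}^G(X)$, from which the conjectured value of $\mathfrak{L}_{nm}(X)-\mathfrak{L}_{nm-1}(X)$ is extrapolated. Your proposal reproduces exactly that provable part, and does so correctly: in Combinatorial problem \ref{combprob2} the elements of $Y$ are ordered partitions of $\{1,\dots,n+m\}$ into blocks of sizes $n$ and $m$, a cut by $(a,b)$ deletes precisely the partitions with $b\in A$ and $a\in B$, and there are $\binom{n+m-2}{n-1}$ of these independently of the choice of $(a,b)$. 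This pins down ${\mathfrak{R}\mathfrak{L}}^G_{nm-1}(X)=\binom{n+m}{n}-\binom{n+m-2}{n-1}$, and via ${\mathfrak{R}\mathfrak{L}}^G(X)\preceq\mathfrak{L}(X)$ yields $\mathfrak{L}_{nm}(X)-\mathfrak{L}_{nm-1}(X)\leq\binom{n+m-2}{n-1}$. Your count is consistent with every tabulated example in Section 5, and your identification of reducible hyperplane sections with rank-one bilinear forms $u^{\top}Av$ is the same observation the paper makes.

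The genuine gap is the one you yourself flag: the reverse inequality $\mathfrak{L}_{nm-1}(X)\leq\binom{n+m}{n}-\binom{n+m-2}{n-1}$, i.e.\ that no codimension-one linear section retaining finitely many reduced points of $X$ can keep more points than the best rank-one cut of the standard complete intersection. That inequality is the entire content of the conjecture (it is the top-gap case of the paper's conjecture that ${\mathfrak{R}\mathfrak{L}}^G(X)=\mathfrak{L}(X)$ for Segre varieties), and neither your proposal nor the paper supplies it. You are right that the classification results of Kwak and Noma cannot close it, since they bound extremal secant lengths from above by $\deg(X)-\codim(X)+\dim(L)$ but give no force pushing the count down to the conjectured value; for the Segre variety the conjectured gap $\binom{n+m-2}{n-1}$ grows much faster than the gap of $2$ those results can detect. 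So as a proof of the stated equality the proposal is incomplete, but it is an honest and correct account of exactly what is provable and of where the open problem lies.
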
 

\section{Lines on surfaces}

Our original motivation comes from the classical problem of determining the maximal number $m_{\ell}(d)$ of lines that can be contained in a degree $d$ smooth surface $S$ of $\PP^3(x:y:z:w)$. As all surfaces of degree $d\leq 2$ are ruled, this question is only relevant for surfaces of degree $d\geq 3$. When $d = 3$, the Cayley-Salmon theorem ensures that every such $S$ must contain exactly $27$ distinct lines. However, for $d\geq 4$, the general degree $d$ surface contains no lines at all; the problem of determining $m_{\ell}(d)$ is a quasi-enumerative problem.

Over a century ago, Clebsch \cite{clebsch1} formulated the bound $m_{\ell}(d) \leq d(11d - 24)$ and Segre \cite{segre1} later proved $m_{\ell}(d) \leq (d - 2)(11d - 6)$, for all $d\geq 3$. These bounds have been improved slightly in modern times and the methods of Segre made rigorous using modern intersection theory; Bauer and Rams \cite[Theorem 1.1]{bauer_rams1} have proven $$m_{\ell}(d) \leq 11d^2 - 32d + 24$$ for all $d\geq 3$. To our knowledge, this is the best known bound for $m_{\ell}(d)$ when $d\geq 6$.

However, this latest bound is still known not to be sharp. It fails to be so already for the case of smooth quartic surfaces, where it is known that any irreducible quartic surface not ruled by lines contains at most $64$ lines, see for instance \cite[Theorem 4.5]{gonzalez_rams1}. Furthermore, quartics achieving this bound exist. To find a lower bound for $m_{\ell}(d)$ one only needs to provide an example of a surface with lines. For general $d$, the best known example of a surface with many lines is the smooth degree $d$ Fermat surface $V(x^d + y^d + z^d + w^d)$, which contains exactly $3d^2$ distinct lines.

The first of the numbers $m_{\ell}(d)$ which is currently unknown is that for $d = 5$. By \cite[Theorem 1.2]{rams_schutt1}, together with the example of the Fermat quintic, we see that $75\leq m_{\ell}(5) \leq 127$. To our knowledge, no example is currently known of a smooth quintic surface with more than $75$ lines. Interestingly, because $m_{\ell}(d)$ is bounded between two quadratic polynomials, if it turns out that $m_{\ell}(5) \leq 101$, then by interpolation using the examples $m_{\ell}(3) = 27, m_{\ell}(4) = 64$, there can be no polynomial function $f(d)$ that agrees with $m_{\ell}(d)$ for all $d\geq 3$. If this were true, then it would imply that any method which only produces a single polynomial bound is doomed to fail to produce an exact formula for $m_{\ell}(d)$.

Our work presented in this note began by looking for an alternate description of the numbers $m_{\ell}(d)$ along with methods that are capable of producing non-polynomial integer sequences. In what follows, we relax the constraint on the surfaces we consider to allow for nonreduced, reducible, and singular surfaces. Let $m_{\ell}(d)$ now denote the maximal possible number of lines that can be contained in a degree $d$ surface of $\PP^3$, given that the surface contains only finitely many lines.

There is a generalization of the Veronese embedding for Grassmannians \cite{harris1}; in particular, given a $d > 1$, one may define an embedding $$v_d: \GG(1,3) = G(2,4)\hookrightarrow G\left(\binom{d + 3}{d} - d - 1, \binom{d + 3}{d}\right)$$ by $$L\mapsto I(L)_d,$$ for each line $L\subseteq \PP^3$, where $I(L)_d$ denotes the degree $d$ part of the homogeneous ideal $I(L)$ of $L$.

Given a nonzero homogeneous polynomial $F\in k[x,y,z,w]$ of degree $d$, one can consider the subset of $G(\binom{d + 3}{d} - d - 1, \binom{d + 3}{d})$ consisting of all $\binom{d + 3}{d} - d - 1$-planes of $k[x,y,z,w]_d$ containing $F$. This is a special type of Schubert subvariety of $G(\binom{d + 3}{d} - d - 1, \binom{d + 3}{d})$, a sub-Grassmannian, isomorphic to $G(\binom{d + 3}{d} - d - 2, \binom{d + 3}{d} - 1)$. Then note that we can express $m_{\ell}(d)$ as the maximal finite intersection that can occur by intersecting $v_d(\GG(1,3))$ with such sub-Grassmannians, $$m_{\ell}(d) = \max\{|v_d(\GG(1,3))\cap G|\mid |v_d(\GG(1,3))\cap G|<\infty\},$$ where this maximum is taken over all sub-Grassmannians $G$ in $G(\binom{d + 3}{d} - d - 1, \binom{d + 3}{d})$ of the above form.

All this generalized Veronese embedding serves to do is provide us with an alternate language with which to state our problem. However, viewing the problem from this perspective suggests a way to relate $m_{\ell}(d)$ to several integer sequences that seem interesting in their own right. One sequence works as follows.

\begin{definition}
For each $i$-plane $V$ in $\PP^n$, denote by $G_{V}\cong G(k - i, n - i)$ the sub-Grassmannian of $\GG(k, n) = G(k + 1, n + 1)$ consisting of all $k$-planes of $\PP^n$ containing $V$. Let $X\subseteq \GG(k,n)$ be any subvariety of dimension $\leq \codim(\GG(k - 1, n - 1))$. Supposing that $X$ has finite intersection with a $G_V$ for at least one point $V$ in $\PP^n$, for each $i = 0,\ldots, k - 1$, we may define $$\mathfrak{L}_i(X) := \max\{|X\cap G_V|\mid |X\cap V| < \infty\},$$ where this maximum is taken over all $i$-planes $V$ of $\PP^n$.
\end{definition} 

The reason for the abuse of notation in redefining $\mathfrak{L}(X)$ here is that when $k$ = $n - 1$, so $\GG(k,n)$ is a projective space, this definition indeed specializes to our first definition of the secant indices described in Section 2, just without the condition that $X$ be smooth or that these intersections be reduced.

In the specific case of interest, when $X = v_d(\GG(1,3))$ and $\GG(k,n)$ is the codomain of $v_d$, we have that the bottom term of the resulting integer sequence (when $i = 0$) is exactly $m_{\ell}(d)$. As $G_V$ is a single point when $V$ is $k$-dimensional, the top term of the sequence is just $1$. Described differently, the question of what integers appear in this sequence is equivalent to the following.

\begin{question}
What is the maximal number of lines that can be contained in the intersection of $1 \leq i \leq \binom{d + 3}{d} - d - 1$ linearly independent degree $d$ surfaces (possibly singular, reducible, or nonreduced) of $\PP^3$, supposing that the intersection contains only finitely many lines?
\end{question}

The fact that the intersection of any $\binom{d + 3}{d} - d - 1$ degree $d$ independent surfaces can only contain $1$ line is a consequence of the fact that $\dim_k(I(L)_d) = \binom{d + 3}{d} - d - 1$ for any line $L\subseteq \PP^3$. One could further modify the question to only allow the intersection of smooth degree $d$ surfaces. In this case, it is straightforward to see that the terms of the resulting integer sequence are nondecreasing as one varies $i$ from $\binom{d + 3}{d} - d - 1$ to $1$. The last term of this sequence is the $m_{\ell}(d)$ defined at the beginning of this section for smooth surfaces, and the penultimate term is at most $d^2$ by B\'ezout's theorem.

As an example, for $d = 4$, this sequence has $\binom{d + 3}{d} - d - 1 = 30$ terms, but we already know the penultimate term is at most $16$ and the first term is $1$. So the sequence must contain repeated numbers. Our original hope was that if one could compute the earlier numbers of these sequences, then it would be possible to extrapolate from observable patterns in those numbers a formula for $m_{\ell}(d)$. The feasibility of this approach appears dubious, but these sequences seem to be of independent interest, potentially reflecting properties of the possible configurations of lines on degree $d$ surfaces.

\end{document}